\theoremstyle{plain}
\newtheorem{theorem}{Theorem}[section]
\newtheorem*{conjecture*}{Conjecture}
\newtheorem{prop}[theorem]{Proposition}
\newtheorem{lemma}[theorem]{Lemma}
\newtheorem{coro}[theorem]{Corollary}
\theoremstyle{definition}
\newtheorem{remark}[theorem]{Remark}
\newtheorem*{ack}{Acknowledgements}
\def\CC{{\mathbb{C}}}
\def\PP{{\mathbb{P}}}
\def\QQ{{\mathbb{Q}}}\def\ZZ{{\mathbb{Z}}}
\def\cJ{{\mathcal{J}}}
\def\cO{{\mathcal{O}}}
\def\cE{{\mathcal{E}}}
\def\cA{{\mathrm{A}_C}}
\def\cF{{\mathcal{F}}}
\def\cG{{\mathcal{G}}}
\def\cI{{\mathcal{I}}}
\def\cK{{\mathrm{Kum}_C}}
\def\cL{{\mathcal{L}}}
\def\cP{{\mathcal{P}}}
\def\cM{{\mathcal{M}}}
\def\cN{{\mathcal{N}}}
\def\cR{{\mathcal{R}}}
\def\cU{{\mathcal{U}}}
\def\cC{{\mathcal{C}}}\def\cQ{{\mathcal{Q}}}
\def\cV{{\mathcal{V}}}\def\cF{{\mathcal{F}}}
\def\cW{{\mathcal{W}}}\def\cX{{\mathcal{X}}}
\def\ra{{\rightarrow}}
\def\lra{{\longrightarrow}}
\def\fe{{\mathfrak e}}
\def\fh{{\mathfrak h}}\def\fp{\mathfrak{p}}
\def\fsl{\mathfrak{sl}}
\def\fgl{\mathfrak{gl}}
\def\bS{S}
\DeclareMathOperator{\Tot}{Tot}
\DeclareMathOperator{\Hom}{Hom}
\DeclareMathOperator{\Pic}{Pic}
\DeclareMathOperator{\GL}{GL}
\DeclareMathOperator{\SL}{SL}
\DeclareMathOperator{\SU}{SU}
\DeclareMathOperator{\Ext}{Ext}
\keywords{Moduli spaces of stable bundles, Coble hypersurfaces, degeneracy loci, Hecke lines, self-dual hypersurfaces, subvarieties of Grassmannians}
\subjclass[2020]{14H60; 22E46}
\title{The Coble quadric}
\author{V. Benedetti, M. Bolognesi, D. Faenzi, L. Manivel}
\begin{document}

\begin{abstract}
 Given a smooth genus three curve $C$, the moduli space of rank two stable vector bundles 
on $C$ with trivial determinant embeds in $\PP^8$ as a hypersurface whose singular locus is 
the Kummer threefold of $C$; this hypersurface is the Coble quartic. Gruson, Sam and Weyman 
realized that this quartic could be constructed from a general skew-symmetric four-form 
in eight variables. Using the lines contained in the quartic, we prove that a similar construction allows to recover $\SU_C(2,L)$, the moduli space 
of rank two stable vector bundles on $C$ with fixed determinant of odd degree $L$, as a subvariety of $G(2,8)$. In fact, each point $p\in C$ defines a natural embedding of $\SU_C(2,\cO(p))$ in $G(2,8)$. We show that, for the generic such embedding, 
there exists a unique quadratic section of the Grassmannian which is 
singular exactly along the image of $\SU_C(2,\cO(p))$, and thus deserves to be coined the Coble quadric of 
the pointed curve $(C,p)$.  
\end{abstract}

\maketitle

\section{Introduction}

A century ago, Arthur Coble proved 
that there exists a unique quartic hypersurface $\mathcal{C}$ in $\PP^7$ that is singular exactly along the 3 dimensional Kummer variety, image of the Jacobian of a genus $3$ curve $C$ via the $|2\Theta|$-linear system  (\cite{Coble}, see also \cite{beauville-coble, kollar}). This remarkable hypersurface is now named after him, and its many very special features have been studied by several algebraic geometers. For example $\mathcal{C}$ is projectively self-dual \cite{pauly}, it has close relationships with the $\Theta$-geometry of the curve (e.g. a Schottky-Jung configuration of Kummer surfaces of Prym varieties \cite{vGP}, etc.) and with moduli of configurations of points in the projective space \cite{AB15}. 

Probably, the most striking property is,  however, that $\mathcal{C}$ is the  image, via the theta map, of the moduli space of semi-stable rank two vector bundles on $C$ with trivial determinant. This was first remarked by Narasimhan and Ramanan in the seminal paper \cite{nr}. In particular, since the theta map is an embedding for rank two bundles with trivial determinant \cite{beauville1}, we can identify $\mathcal{C}$ with the moduli space $\SU_C(2)$ itself. 

In rank two there is, up to isomorphism, only one other moduli space $\SU_C(2,L)$ of rank two vector bundles on $C$, obtained by fixing the determinant to be any given line bundle $L$ of odd degree (up to non-canonical isomorphisms, $L$ is irrelevant). Contrary to $\cC$, this moduli space is smooth and we can wonder what could be an analogue of the 
Coble quartic. The main results of this paper answer this natural question. 

In order to achieve this, we will use the theory of theta representations \cite{vinberg},
in the way 
this was initiated in \cite{GSW} as a complex addition to arithmetic invariant theory. 
In our setting, the main point is that starting from the $\GL_8$-module $\wedge^4\CC^8$
one can easily construct the Coble quartics in terms of Pfaffian loci. From this point of 
view, the curve $C$ defined by a general element of $\wedge^4\CC^8$ is not immediately visible, but certain deep properties of the quartic $\cC$ become easy to establish. For example,
we give in Theorem \ref{selfdual} a short, self-contained proof of the self-duality of $\cC$.
Then we switch from $\PP^7$ to the Grassmannian $G(2,8)$ and observe that also in this 
Grassmannian, there exist natural Pfaffian loci corresponding to skew forms of rank at 
most $4$ and $6$, respectively of codimension $6$ and $1$:
$$D=D_{Z_6}(v)\subset Q=D_{Z_1}(v)\subset G(2,8).$$
Here $v$ is a general element in $\wedge^4 \CC^8$ and $Q$ is a quadric section of the Grassmannian that is singular exactly
along the six-dimensional smooth locus $D$ (the notation $D_{Z_i}(v)$ will be explained in Section \ref{sec_hecke}). The connection with the Coble quartic comes from the fact that $D$ parametrizes a family of lines on it, some of the so-called Hecke lines. We deduce (Theorem \ref{thm_main} later on):

\begin{theorem} $D\simeq \SU_C(2,L)$ for $L$ of odd degree. \end{theorem}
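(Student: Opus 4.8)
The plan is to realise $D$ as the family of Hecke lines of a fixed pointed curve $(C,p)$ on the Coble quartic $\mathcal C\subset\PP^7$, and then to identify that family with $\SU_C(2,\cO(p))$. Here $\mathcal C$ is the Coble quartic of the genus three curve $C$ attached to the general four--form $v$, so that by \cite{nr} and \cite{beauville1} one has $\mathcal C\simeq\SU_C(2)$ and $\Sing\mathcal C=\cK$; the point $p$, hence the full pointed curve $(C,p)$, is read off from $v$ as in Section~\ref{sec_hecke}. The linear--algebraic input is the following. For $U\in G(2,8)=G(2,V)$ let $\bar v_U\in\wedge^4(V/U)$ be the image of $v$, viewed as an alternating form on the six--dimensional space $V/U$. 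Then (with $\mathcal Q$ the rank--$6$ tautological quotient bundle on $G(2,V)$) the Pfaffian $\mathrm{Pf}(\bar v_U)$ lands in $(\det\mathcal Q)^{\otimes 2}=\cO_G(2)$, so $Q=\{\,\mathrm{Pf}(\bar v_U)=0\,\}$ is a quadratic section of $G(2,V)$; and, since the differential of the Pfaffian of a $6\times 6$ alternating form vanishes exactly where the form has rank at most $2$, one gets $\Sing Q=D$. This is the precise analogue, one Grassmannian up, of $\Sing\mathcal C=\cK$, and it is what justifies calling $Q$ the Coble quadric. (In the notation of the introduction, $Q=D_{Z_1}(v)$ and $D=D_{Z_6}(v)$.) A second, purely multilinear, observation is that the degeneracy of $\bar v_U$ is tied to the Pfaffian description of $\mathcal C$ itself: if $U\in D$, then both points of $\PP(U)$, hence the whole line, satisfy the quartic equation of $\mathcal C$, i.e. $\PP(U)\subset\mathcal C$.

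The geometric heart of the proof is the equivalence, for $U\in G(2,V)$: $U\in D$ if and only if $\PP(U)$ is a Hecke line of the pointed curve $(C,p)$. Recall (Section~\ref{sec_hecke} and \cite{nr}) the Hecke construction: for a stable $F\in\SU_C(2,\cO(p))$ one has $\PP(F_p)\simeq\PP^1$, and the elementary transformations $E_\ell=\ker(F\twoheadrightarrow F_p/\ell)$, $\ell\in\PP(F_p)$, are stable with trivial determinant, so they sweep out a line $h_F$ lying on $\mathcal C=\SU_C(2)\subset\PP^7=\PP(H^0(\SU_C(2),\Theta)^\vee)$; sending $F$ to the plane $\langle h_F\rangle\in G(2,8)$ defines a morphism $\iota_p\colon\SU_C(2,\cO(p))\to G(2,8)$, and by the theory of Hecke curves $\iota_p$ is a closed immersion onto a smooth irreducible sixfold. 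The inclusion $\iota_p(\SU_C(2,\cO(p)))\subseteq D$ is obtained by restricting the $\Theta$--sections to $h_F$: the base--point--free pencil trick, applied to the pencil of Hecke modifications, translates into the statement that $\bar v$ drops into the degeneracy stratum defining $D$ on the plane $\langle h_F\rangle$, i.e. $\langle h_F\rangle\in D$. (This exhibits $D$ inside the Fano variety of lines on $\mathcal C$, though not as all of it: lines meeting or contained in $\cK$, and Hecke lines of other points, also lie on $\mathcal C$ but not in $D$.)

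For the reverse inclusion I would construct an inverse morphism $\Phi\colon D\to\SU_C(2,\cO(p))$ directly. Given $U\in D$ we have $\PP(U)\subset\mathcal C$ by the remark above; one then shows that $\PP(U)$ is a Hecke line, that its intersection pattern with $\cK=\Sing\mathcal C$ pins down the point $p$, and that the stable bundle $F$ with $h_F=\PP(U)$ — obtained from $\PP(U)$ by the reverse elementary transformation — varies algebraically with $U$, giving the morphism $\Phi$. Since $D$ is smooth, hence reduced, of pure dimension $6$, it is enough to check $\iota_p\circ\Phi=\mathrm{id}_D$ and $\Phi\circ\iota_p=\mathrm{id}$ on closed points, and these identities are exactly the statement that the Hecke construction and its reverse are mutually inverse. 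Therefore $D=\iota_p(\SU_C(2,\cO(p)))\simeq\SU_C(2,\cO(p))$, and since all $\SU_C(2,L)$ with $\deg L$ odd are (non--canonically) isomorphic, $D\simeq\SU_C(2,L)$ as claimed. As an alternative to building $\Phi$, once $\iota_p(\SU_C(2,\cO(p)))\subseteq D$ is known one may finish by connectedness: $\iota_p(\SU_C(2,\cO(p)))$ is a connected component of the smooth sixfold $D$, and $D=\Sing Q$ is connected, so the inclusion is an equality.

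The step I expect to be the main obstacle is precisely this converse: proving that every $U\in D$ gives a Hecke line of the \emph{particular} pointed curve $(C,p)$ — not merely a line on $\mathcal C$ — and doing so in families, so that the bijection on closed points upgrades to an isomorphism of schemes. This rests on two nontrivial ingredients: the explicit representation--theoretic description, in the spirit of \cite{GSW}, of how $C$ and $p$ are encoded in $v$ and of the Hecke lines of $\mathcal C$ in terms of $v$; and the (classical but delicate) infinitesimal theory of Hecke curves, needed to know that $\iota_p$ is an immersion and that no Hecke line degenerates. Genericity of $v$, equivalently of $(C,p)$, is used to guarantee that $D$ has the expected dimension $6$ and is smooth, and that $\iota_p$ is everywhere an embedding.
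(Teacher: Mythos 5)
Your overall strategy runs in the opposite direction from the paper's, and the step you yourself flag as the ``main obstacle'' is not the converse inclusion but the forward one: the assertion that $\iota_p(\SU_C(2,\cO(p)))\subseteq D$, justified only by an appeal to ``the base-point-free pencil trick''. This is exactly the hard point. To place the plane $\langle h_F\rangle$ in $D=D_{Z_6}(v)$ you must show that $v$, reduced modulo $U_2=\langle h_F\rangle$, is an alternating form of rank $\le 2$ on $V_8/U_2$, i.e.\ you must relate the restriction of the $|2\Theta|$-linear system to a Hecke pencil with the explicit tensor $v$. No argument is given for this, and it also presupposes that you can identify \emph{which} point $p\in C$ corresponds to the given $v$; the marked point that $v$ determines (via the Vinberg/GSW dictionary) is not handed to you in a form that lets you match it with the Hecke construction a priori. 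The paper avoids both difficulties by arguing from $D$ towards the moduli space: it shows the lines $\PP(U_2)$, $[U_2]\in D$, lie on the Coble quartic and \emph{cover} it (a Chern class computation), so $D$ sits inside one of the only two covering families of lines, $\cF_H$ or $\cF_R$; the ruling family $\cF_R$ is excluded because $D$ is Fano (proved by adjunction on the zero locus $Z_6(v)\subset Fl(2,6,V_8)$), hence rationally connected, so it cannot map nonconstantly to the abelian threefold base of $\cF_R$, and a dimension count rules out the constant case. The same Fano property forces the image of $D$ in $\cF_H$ into a single fiber of the rational map $\cF_H\dashrightarrow C$, which is where the point $c$ appears --- as an output of the argument, not an input. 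One then concludes because the birational morphism $\SU_C(2,\cO_C(c))\to D$ must be an isomorphism, as the source has Picard number one.

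Two further points in your write-up need justification even within your own scheme. First, the connectedness of $D=\Sing Q$, on which your ``alternative finish'' rests, is not free; in the paper it is a byproduct of $D$ being Fano (equivalently, of $Z_6(v)$ being the smooth zero locus of a general section of a rank-$14$ globally generated bundle on a $20$-dimensional flag variety). Second, your claim that $U\in D$ forces $\PP(U)\subset\mathcal C$ is correct but not automatic from the Pfaffian formalism alone: the paper proves it (Proposition~\ref{prop_hecke_in_coble}) by an explicit normal-form computation using that the secant variety of $G(3,6)$ fills its Pl\"ucker space. If you repair the forward inclusion --- or better, adopt the paper's covering-family argument, which replaces the multilinear verification by the classification of covering lines on $\cC$ from \cite{mustopa-teixidor} plus rational connectedness of $D$ --- the rest of your outline (matching dimensions, smoothness, and concluding by components) is sound.
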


Consequently, the moduli space, which is smooth,  comes up with a natural hypersurface of which it is the singular locus, contrary to the even case for which the moduli space is singular and uniquely determined by its singular locus, which is the Kummer.
We extend the unicity statement by proving (Theorem \ref{Coble_G(2,8)} later on): 

\begin{theorem} $Q$ is the only quadratic section of the Grassmannian 
that is singular along $D$. \end{theorem}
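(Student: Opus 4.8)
The plan is to translate the statement into the single cohomological assertion $h^0\big(G,\cI_D^2(2)\big)=1$, where $G=G(2,8)$, $\cI_D$ is the ideal sheaf of $D$ and $(k)$ denotes the Plücker twist $\otimes\,\cO_G(k)$. Since both $D$ and $G$ are smooth, a quadratic section $\{F=0\}$ of $G$ is singular along $D$ exactly when $F\in H^0(\cI_D^2(2))$; this space is nonzero because $Q$ provides such an $F$, so only the inequality $h^0(\cI_D^2(2))\le 1$ is at issue. I would approach it through the $\cI_D$‑adic filtration: smoothness of $D$ gives $\cI_D^k/\cI_D^{k+1}\cong\mathrm{Sym}^k N^\vee_{D/G}$ and hence exact sequences
\[
0\to\cI_D^{k+1}(2)\to\cI_D^{k}(2)\to\mathrm{Sym}^k N^\vee_{D/G}\otimes\cO_D(2)\to 0 .
\]
No quadratic section of $G$ can vanish to arbitrarily high order along the nonempty $D$, so $H^0(\cI_D^k(2))=0$ for $k\gg 0$; chaining the sequences downwards, $h^0(\cI_D^2(2))\le 1$ will follow once we establish (i) $H^0\big(D,\mathrm{Sym}^k N^\vee_{D/G}\otimes\cO_D(2)\big)=0$ for every $k\ge 3$, and (ii) $H^0\big(D,\mathrm{Sym}^2 N^\vee_{D/G}\otimes\cO_D(2)\big)$ is one‑dimensional, spanned by the image $F_Q$ of the equation of $Q$ (its ``transverse Hessian'' along $D$).

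To make (i) and (ii) effective I would use the description of $D$ as a degeneracy locus. A general $v\in\wedge^4\CC^8$ yields a skew form $\bar v\in H^0\big(G,\wedge^2\cQ^\vee(1)\big)$ via $\wedge^4\CC^8\to\wedge^4\cQ\cong\wedge^2\cQ^\vee\otimes\det\cQ$, with $\cQ$ the rank‑$6$ quotient bundle and $\det\cQ=\cO_G(1)$; then $Q=\{\mathrm{Pf}\,\bar v=0\}\in|\cO_G(2)|$, and $D$ is the locus where $\bar v$ has corank $\ge 4$, of codimension $\binom 4 2=6$. On $D$ one has the rank‑$4$ kernel bundle $\mathcal{K}=\ker\bar v\subset\cQ|_D$, and the structure theory of skew‑symmetric degeneracy loci gives $N^\vee_{D/G}\cong\wedge^2\mathcal{K}\otimes\cO_D(-1)$. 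Moreover $\bar v$ restricts to a nowhere‑degenerate $\cO_D(1)$‑valued skew form on the rank‑$2$ bundle $\cQ/\mathcal{K}$, forcing $\det(\cQ/\mathcal{K})\cong\cO_D(1)$ and hence $\det\mathcal{K}\cong\cO_D$. Therefore
\[
\mathrm{Sym}^k N^\vee_{D/G}\otimes\cO_D(2)\;\cong\;\mathrm{Sym}^k(\wedge^2\mathcal{K})\otimes\cO_D(2-k),
\]
and for $k=2$ the twist cancels: $\mathrm{Sym}^2(\wedge^2\mathcal{K})\cong\mathbb{S}_{(2,2)}\mathcal{K}\oplus\det\mathcal{K}=\mathbb{S}_{(2,2)}\mathcal{K}\oplus\cO_D$. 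The transverse Hessian of $Q$ is the $\SL(\mathcal{K})$‑invariant ``wedge‑square'' form $\omega\mapsto\omega\wedge\omega$ on $\wedge^2\mathcal{K}$, which lands in the $\cO_D$‑summand and is nowhere vanishing there. So, using $h^0(\cO_D)=1$, statement (ii) reduces to $H^0(D,\mathbb{S}_{(2,2)}\mathcal{K})=0$, and (i) to $H^0\big(D,\mathrm{Sym}^k(\wedge^2\mathcal{K})\otimes\cO_D(2-k)\big)=0$ for $k\ge 3$.

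The remaining point — and the real obstacle — is precisely these vanishings, most critically $H^0(D,\mathbb{S}_{(2,2)}\mathcal{K})=0$. Since $\det\mathcal{K}\cong\cO_D$, every Schur bundle $\mathbb{S}_\lambda\mathcal{K}$ has vanishing first Chern class, so the natural route is to identify $\mathcal{K}$, through the Hecke‑line/theta description of the embedding $D\simeq\SU_C(2,L)\hookrightarrow G(2,8)$, with an explicit bundle on the moduli space that can be shown to be (semi)stable; then every $\mathbb{S}_\lambda\mathcal{K}$ is semistable of slope $0$ and has no sections unless it contains $\cO_D$, which for $\lambda=(2,2)$ it does not, because $\mathbb{S}_{(2,2)}\CC^4$ is $\SL_4$‑irreducible and nontrivial; for $k\ge 3$ one supplements this with the strict negativity of the twist $\cO_D(2-k)$ (and Kodaira–Nakano, or the same semistability estimate). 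A more robust alternative, closer in spirit to the Gruson–Sam–Weyman treatment of the Coble quartic, is to build a $\GL_8$‑equivariant Kempf–Lascoux–Weyman resolution of $\cO_D$ — and of $\cI_D^2$ — as $\cO_G$‑modules from the degeneracy‑locus data, and to read off the groups in (i) and (ii) by Bott vanishing on $G(2,8)$. Once $H^0(D,\mathbb{S}_{(2,2)}\mathcal{K})=0$ is in hand, any $F\in H^0(\cI_D^2(2))$ has transverse Hessian a scalar multiple $\lambda F_Q$ of that of $Q$, so $F-\lambda F_Q$ vanishes to order $\ge 3$ along $D$ and is killed by (i); hence $F\in\CC\cdot F_Q$, which is the asserted uniqueness of $Q$.
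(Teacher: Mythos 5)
Your reduction is sound as far as it goes: translating uniqueness into $h^0(G,\cI_D^2(2))\le 1$, filtering by powers of $\cI_D$, identifying $\cN^\vee_{D/G}\simeq \wedge^2\mathcal{K}(-1)$ with $\det\mathcal{K}\simeq\cO_D$ (this is the paper's Lemma \ref{normal}), and locating the transverse Hessian of the Pfaffian in the $\wedge^4\mathcal{K}=\cO_D$ summand of $S^2(\wedge^2\mathcal{K})$ are all correct. But the argument stops exactly where the real work begins: the vanishings $H^0(D,\mathbb{S}_{(2,2)}\mathcal{K})=0$ and $H^0(D,S^k(\wedge^2\mathcal{K})(2-k))=0$ for $k\ge 3$ are asserted, not proved, and the justification you sketch for the crucial $k=2$ case is fallacious. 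Irreducibility of $\mathbb{S}_{(2,2)}\CC^4$ as an $\SL_4$-module does not prevent the bundle $\mathbb{S}_{(2,2)}\mathcal{K}$ from having sections: if $\mathcal{K}$ were trivial this bundle would be trivial of rank $20$, and even for $\mathcal{K}$ stable of trivial determinant, $\mathbb{S}_{(2,2)}\mathcal{K}$ can contain $\cO_D$ (for instance whenever $\mathcal{K}$ carries a symplectic structure, since $S^2(\wedge^2\CC^4)$ contains \emph{two} copies of the trivial $Sp_4$-representation, one of which lies in $\mathbb{S}_{(2,2)}\CC^4$). So you genuinely need stability, or at least simplicity of $\wedge^2\mathcal{K}$ and the absence of extra invariant tensors, and no such statement about the kernel bundle $\mathcal{K}=\cU_6/\cU_2$ on $\SU_C(2,L)$ is established in your proposal (nor is it obvious). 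The same unproved semistability underlies your slope argument for $k\ge 3$.

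For comparison, the paper avoids the conormal filtration beyond the first step. It uses the single sequence $0\to H^0(\cI_D^2(2))\to H^0(\cI_D(2))\to H^0(\cN^\vee_{D/G}(2))\to H^1(\cI_D^2(2))$, computes $h^0(\cI_D(2))=71$ from the equivariant Pfaffian (Józefiak--Pragacz--Weyman) resolution of $\cI_D$, bounds $h^0(\cN_{D/G})\ge 70$ by a Koszul/Bott computation on $Fl(2,6,V_8)$, and kills $H^1(\cI_D^2(2))$ via the exact sequence $0\to\cI_P\to\cI_D^2\to\cM\to 0$ together with an explicit $\GL_6$-equivariant minimal resolution of the module $M=I^2/S_+I_P$, relativized over $G(2,V_8)$ and shown acyclic after twisting by $\cO_G(2)$. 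This is in effect your ``route (2)'', but applied to $\cI_D^2$ itself rather than to the graded pieces $S^k\cN^\vee$; the point is that it requires only finite, checkable Bott-type vanishings and no stability input. To complete your version you would either have to carry out an analogous equivariant resolution computation for each relevant $S^k(\wedge^2\mathcal{K})(2-k)$, or prove slope-(semi)stability of $\mathcal{K}$ on the moduli space together with simplicity of $\wedge^2\mathcal{K}$ -- both substantial missing steps.
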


Because of this property, $Q$ really deserves to be called a {\it Coble quadric}. Moreover, exactly as the Coble quartic, we show this hypersurface is self-dual in a suitable sense (Theorem \ref{selfdual2}). As a matter of fact, for each point $p\in C$, there is an embedding $$ \varphi_p : \SU_C(2,\cO_C(p)) \hookrightarrow G(2,8),$$ 
(see \cite{beauville}), and we show that at least for the generic $p$, there exists a unique quadric section of the Grassmannian that is singular along the moduli space (Theorem \ref{quademb}).

Remarkably, we found other instances of this phenomenon: for example, an eightfold inside the flag variety $Fl(1,7,8)$ whose singular locus is an abelian threefold, essentially the Jacobian of the curve (see Remark \ref{8fold}). 

\smallskip
The paper is organized as follows. In section 2, we recall a few classical 
results about lines on moduli spaces of vector bundles on curves, and more specifically
about lines in the Coble quartic. In section 3 we explain how the Coble quartic, 
the Kummer threefold and the associated Jacobian can be constructed from a skew-symmetric four-form in eight variables, and we give a short proof of the self-duality of the quartic. In section 4 we explain how this point of view allows to understand the lines in the Coble quartic in terms of orbital degeneracy loci \cite{bfmt, bfmt2}, and we deduce Theorem 1  (see Theorem 21). The 
resulting description as a relative Pfaffian locus makes it clear 
that the odd moduli space
is the singular locus of a special quadratic section of the Grassmannian $G(2,8)$. 
In order to prove that this special quadric is unique, we need to study the square of the ideal of the Grassmannian $G(2,6)$ in its Pl\"ucker embedding. Going back to the relative
setting we deduce Theorem 2 (see Theorem 27). We finally complete the picture by explaining why and how the special quadric is also self-dual.  

\begin{ack}
All authors partially supported by FanoHK ANR-20-CE40-0023.
D.F. and V.B. partially supported by SupToPhAG/EIPHI ANR-17-EURE-0002, Région Bourgogne-Franche-Comté, Feder Bourgogne and Bridges ANR-21-CE40-0017.

We warmly thank Christian Pauly, Sasha Kuznetsov and Jerzy Weyman for useful 
discussions. Special thanks also to Shigeru Mukai and Akihiro Kanemitsu for sharing the results of \cite{km}. 
\end{ack}

\section{Lines in the Coble quartic} 

Throughout the text we will denote by $\mathrm{U}_C(r,d)$ the moduli
space of semi-stable vector bundles on a curve $C$ of rank $r$ and determinant of degree $d$. If $L$ is a degree $d$ line bundle on $C$, we will denote by $\SU_C(r,L)$ the subvariety of $U_C(r,d)$ parametrizing vector bundles of determinant $L$; moreover $\SU_C(r):=\SU_C(r,\cO_C)$. Since all the moduli spaces $\SU(r,L)$ are (non canonically) isomorphic when the degree of $L$ is fixed, we will also denote their isomorphism class by $\SU_C(r,d)$; it does depend on $d$ only modulo $r$. Finally,  we will denote by $\mathrm{U}_C(r,d)^{\mathrm{eff}}$ the moduli space of vector bundles with effective determinant. When $d=1$, this  moduli space fibers over the curve $C$ with fiber over $c$ isomorphic to $\SU_C(2,\cO_C(c))$.

\subsection{Covering families of rational curves in $\SU_C(2)$}
Rational curves in the moduli spaces $\SU_C(r,d)$ were extensively studied, see e.g. 
\cite{nr75, opp, hwang, hr, sun, mok-sun, pal}.
Restricting to $g=3$,  $r=2$ and $d=0$, the results of 
\cite{mustopa-teixidor} show that there exist two different families of covering 
lines i.e., families of rational curves of degree one with respect to the Theta embedding
$$\cC:=\SU_2(C)\hookrightarrow |2\Theta|=\PP(V_8),$$
passing through a general point of the moduli space. We will denote these two families by $\cF_H$ and $\cF_R$ and consider them as subvarieties of the Grassmannain $G(2,V_8)$. In the sequel 
we describe these two covering families in some detail. They are both of dimension 
six but behave very differently; we will illustrate this by showing how different  are the corresponding VMRT's (variety of minimal rational tangents), which in our case,
since we deal with lines, are just the spaces of lines through a fixed general point. 

\subsection{Hecke lines}\label{sec_hecke_lines}
A generic Hecke line can be described by choosing a point $c\in C$, and a rank two vector 
bundle $F$ on $C$ with determinant $\det(F)=\cO_C(c)$. Then the 
bundles  $E$ that fit into an exact sequence 
$$0\lra E\lra F\lra \cO_c\lra 0$$
are  parametrized by $\PP (F_c^\vee)\simeq\PP^1$. They have trivial determinant
and are all stable when $F$ is $(1,0)$-semistable in the sense of \cite[Definition 2.5]{mok-sun}. For vector bundles of rank two and degree one, this condition 
is equivalent to stability, hence also to semistability. 
The resulting curve in $\SU_C(2)$ is a line and
such lines are called Hecke lines. Note that dualizing, we get an exact sequence
$$0\lra F^\vee\lra E^\vee\lra \cO_c\lra 0,$$
so a Hecke line parametrizes all the possible extensions 
of $\cO_c$ by $F^\vee$. 

By \cite[Remark 5.3]{pal},
a general Hecke line defines a vector bundle $\cE$ of rank $2$ over $C\times \PP^1$ 
fitting into an exact sequence 
$$0\lra p_1^*F^\vee\otimes p_2^*\cO_{\PP^1}(-1)\lra \cE^\vee\lra p_1^*\cO_c\lra 0,$$
where $p_1$ and $p_2$ are the projections of $C \times \PP^1$ onto the two factors $C$ and $\PP^1$.
An easy consequence is that, since $\cE^\vee$ admits a \emph{unique} jumping line at $c$,
this point can be uniquely recovered from the Hecke line. (Beware this is only 
true for general Hecke lines.)  

We will denote by $\cF_H$ the family of Hecke lines in $\SU_C(2,\cO_C)$, considered
as a subvariety of the space $G(2,V_8)$ of lines in $\PP(V_8)$. 

\begin{remark}\label{hecke-fibers}
Although a Hecke line does not always define a unique point in $C$, once we have
fixed such a point $c$ there is a well-defined morphism from $\SU_C(2,\cO_C(c))$
to $\cF_H$. By the previous observations, the resulting morphism from $\tilde\cF_H:=\mathrm{U}_C(2,1)^{\mathrm{eff}}$ to $\cF_H$ is birational.
\end{remark}

Conversely, Hecke lines passing through a general point $[E]$ of $\SU_C(2,K_C)$
(we make this choice of determinant just for convenience) 
are obtained by choosing a projection $E\ra E_c\ra\cO_c$, where $E_c$ denotes the 
fiber of the vector bundle $E$ at the point $c\in C$. So  they are parametrized
by (the image in $\cF_H$ of) the total space of the projective bundle $\PP(E^\vee)$ 
over $C$. The tangent map of this morphism sends $\PP(E^\vee)$ to the tangent space 
of the moduli space at $[E]$, which is the projectivization of 
$$H^1(C,\mathcal{E}nd_0(E))\simeq H^0(C,K_C\otimes \mathcal{E}nd_0(E))^\vee\simeq H^0(C,S^2E)^\vee,$$
since $K_C\simeq  \det(E)$. Here $ \mathcal{E}nd_0(E)$ denotes the vector bundle of 
traceless endomorphims of $E$. This implies (see \cite{hwang, hr} for more general statements):

\begin{prop}
The VMRT of the family $\cF_H$  of Hecke lines at a general point $[E]$ of the moduli space
is the image of the ruled surface $\PP(E^\vee)$ by the linear system $|\cO_E(2)|$. 
In particular this surface contains no line.
\end{prop}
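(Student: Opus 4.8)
The plan is to identify the variety of lines through a general point $[E]$ with a concretely described projective surface, and then to check that this surface contains no line. First I would recall the standard dictionary: a Hecke line through $[E]$ is determined by the choice of a one-dimensional quotient $E_c\twoheadrightarrow\cO_c$ for a point $c\in C$, hence such lines are parametrized by the total space of $\PP(E^\vee)\to C$ — more precisely, by the image of this ruled surface in $\cF_H\subset G(2,V_8)$. The content of the statement is that this image, when mapped into $\PP(T_{[E]}\SU_C(2,K_C))$ via the tangent (VMRT) map, is the embedding of $\PP(E^\vee)$ by a specific linear system. So the first real step is to compute this tangent map.

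Second, I would set up the identification of the tangent space. Using $K_C\simeq\det E$ and Serre duality, $T_{[E]}\SU_C(2,K_C)\simeq H^1(C,\mathcal{E}nd_0(E))\simeq H^0(C,K_C\otimes\mathcal{E}nd_0(E))^\vee$, and because $\det E\simeq K_C$ one has $\mathcal{E}nd_0(E)\simeq S^2E\otimes(\det E)^{-1}\simeq S^2E\otimes K_C^{-1}$, so $K_C\otimes\mathcal{E}nd_0(E)\simeq S^2E$ and the tangent space is $H^0(C,S^2E)^\vee$. Now the tangent direction at $[E]$ of the Hecke line associated to a quotient $q:E_c\twoheadrightarrow\cO_c$ is the class in $H^1(\mathcal{E}nd_0(E))$ of the corresponding Hecke modification, and under the duality this is the linear functional on $H^0(S^2E)$ given by evaluation at $c$ paired against the symmetric square of $\ker(q)^{\perp}$ — i.e. a point $v\in\PP(E^\vee_c)$ goes to the functional $s\mapsto s(c)(v,v)$ on $H^0(C,S^2E)$. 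This exhibits the VMRT map as the map $\PP(E^\vee)\to\PP H^0(C,S^2E)^\vee$ given fiberwise by the square Veronese $\PP(E^\vee_c)\hookrightarrow\PP(S^2E^\vee_c)$ followed by evaluation; i.e. it is the map defined by the complete (or a sub-) linear system $|\cO_E(2)|$ on $\PP(E^\vee)$, where $\cO_E(1)$ is the relative hyperplane class on $\PP(E^\vee)$. (One checks that for general $E$ the evaluation $H^0(C,S^2E)\otimes\cO_C\to S^2E$ is surjective with no spurious sections, so the system is exactly $|\cO_E(2)|$ and the map is an embedding; this is where genericity of $[E]$ enters.)

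Third, and this is the only genuinely nontrivial point, I must show the resulting surface $S:=$ image of $\PP(E^\vee)$ under $|\cO_E(2)|$ contains no line, equivalently that no irreducible curve $\Gamma\subset\PP(E^\vee)$ has $\cO_E(2)\cdot\Gamma=1$. Write $\Gamma$ numerically as $\Gamma\equiv a\,f+b\,\xi$-type class, where $f$ is a fiber of $\PP(E^\vee)\to C$ and $\xi=\cO_E(1)$; on the ruled surface over a curve of genus $3$ the curves are the fibers (with $\cO_E(2)\cdot f=2$) and sections or multisections $\sigma$ of $\PP(E^\vee)$ (with $\cO_E(2)\cdot\sigma=2\deg\cO_E(1)|_\sigma$, an even number for a section since $\det E\simeq K_C$ has even degree $4$, hence $\cO_E(1)|_\sigma$ has degree $2$ on a minimal section, giving intersection $4$, and larger on others). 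In all cases $\cO_E(2)\cdot\Gamma$ is even and at least $2$, so it is never $1$; hence $S$ contains no line. I would phrase this cleanly by noting $\cO_E(2)=\cO_E(1)^{\otimes 2}$ is the square of a line bundle, so it has even degree on every curve, which immediately forbids lines.

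The main obstacle is the second step: pinning down precisely that the VMRT/tangent map is given by the \emph{complete} system $|\cO_E(2)|$ (rather than some subsystem), which requires the cohomological genericity input $h^0(C,S^2E)=\dim T_{[E]}\SU_C(2)=6$ and surjectivity of the evaluation map, together with the standard but slightly delicate identification of the differential of the Hecke-line construction with the square-Veronese-then-evaluation map. The no-line conclusion, by contrast, is essentially formal once the linear system is identified, since it only uses that $\cO_E(2)$ is a square. I would cite \cite{hwang,hr} for the general framework and give the parity argument explicitly for the final sentence.
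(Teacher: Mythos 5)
Your proposal is correct and follows essentially the same route as the paper, which likewise identifies $T_{[E]}\SU_C(2,K_C)\simeq H^0(C,S^2E)^\vee$ via $\mathcal{E}nd_0(E)\simeq S^2E\otimes(\det E)^{-1}$ and $\det E\simeq K_C$, parametrizes the Hecke lines through $[E]$ by $\PP(E^\vee)$, and reads off the VMRT as the image under $|\cO_E(2)|$ (citing \cite{hwang,hr}), with the no-line claim following from the evenness of $\deg\cO_E(2)|_\Gamma=2\,\deg\cO_E(1)|_\Gamma$. Only a minor slip: stability of $E$ (slope $2$) forces quotient line bundles to have degree at least $3$, so $\cO_E(1)$ restricted to a minimal section has degree at least $3$, not $2$ — but this is irrelevant to the parity argument.
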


Equivalently, the latter claim means that a general Hecke line is not contained 
in any larger linear space contained in $\SU_C(2)$, although such larger linear spaces 
do exist.

\subsection{Lines in the ruling}\label{sec_ruling_lines}

For each line bundle $L\in \Pic^1(C)$, consider the rank two vector bundles $E$ obtained
as extensions of the form
$$0\lra L\lra E\lra K_C\otimes L^\vee\lra 0.$$
Such extensions are parametrized by $\PP_L:=\PP (\Ext^1(K_C\otimes L^\vee,L))\simeq\PP^3$. 
Hence a ruling of  $\SU_C(2)$ by a family of $\PP^3$'s parametrized by $\Pic^1(C)$, 
which we denote by $\PP(\cR)\ra \Pic^1(C)$.

Note that $\PP_L$ intersects the Kummer threefold along a copy $C_L$ of $C$
\cite[1.1]{opp}.  
According to \cite[Theorem 1.3]{opp}, a line in $\PP_L$ is a Hecke line if and 
only if it meets $C_L$. 

Moreover, by \cite[Proposition 1.2]{opp}, two spaces $\PP_L$ and $\PP_M$ are always distinct for $L\ne M$ and, for sufficiently general choices of $L$ and $M$, they are disjoint. When they meet, their intersection is a single point, or a line; the latter case happens exactly when $K_C-L-M$ is effective. 
In particular, if a line is contained in $\PP_L\cap\PP_M$, it must be a bisecant 
to both $C_L$ and $C_M$. 

\smallskip 
Now consider the family $\cF_R$ of lines contained in the $\PP^3$'s of the ruling.
By what we have just recalled, $\cF_R$ is the birational image in $G(2,V_8)$ of 
the quadric bundle 
$G(2,\cR)$ over $\Pic^1(C)$.

\begin{prop}\label{vmrt-ruling}
The VMRT at a general point of $\SU_C(2)$, of the family $\cF_R$ of lines in its ruling,
is the disjoint union of eight planes in $\PP^5$. 
\end{prop}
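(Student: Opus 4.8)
The plan is to compute the VMRT of $\cF_R$ at a general $[E]\in\SU_C(2)$ by counting, and then parametrizing, the $\PP^3$'s of the ruling passing through $[E]$, and on each such $\PP_L$ identifying the lines through $[E]$ that lie inside $\PP_L$. First I would observe that a general $[E]$ lies on $\PP_L$ precisely when $E$ arises as an extension $0\to L\to E\to K_C\otimes L^\vee\to 0$, i.e. when $\Hom(L,E)\neq 0$, equivalently when $h^0(C,E\otimes L^\vee)>0$. For a general stable $E$ with trivial determinant (so $\deg(E\otimes L^\vee)=-2$ after twisting to the appropriate degree; concretely one takes $L$ of degree one so $E\otimes L^\vee$ has degree $-2$), the locus of such $L\in\Pic^1(C)$ is a Brill–Noether locus inside the Jacobian. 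The expected dimension count and a standard argument (e.g. via the theta divisor of $E$, or Mukai–Newstead-type results on genus three) show that for general $E$ this locus is finite and consists of exactly $8$ points $L_1,\dots,L_8$ — morally this is the intersection number coming from the class of the relevant determinantal locus, and it matches the fact that $\SU_C(2)$ has degree $4$ in $\PP^7$ with each $\PP_L$ a $\PP^3$ and the ruling sweeping it out, or one can quote \cite{opp} for the structure of the ruling.

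Next, for each fixed $L=L_i$, I would identify the lines in $\PP_L\simeq\PP(\Ext^1(K_C\otimes L^\vee,L))$ through the point $[E]$ that are contained in $\cF_R$. By construction \emph{every} line of $\PP_L$ lies in $\SU_C(2)$ and belongs to $\cF_R$; the lines through a fixed point $[E]\in\PP_L\simeq\PP^3$ form a $\PP^2$. So the VMRT contribution from each $\PP_{L_i}$ is a plane $\PP^2$, and the whole VMRT is the union of these eight planes, sitting inside $\PP(T_{[E]}\SU_C(2))\simeq\PP^5$ via the tangent maps $T_{[E]}\PP_{L_i}\hookrightarrow T_{[E]}\SU_C(2)$. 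It then remains to check that these eight planes are \emph{disjoint} in $\PP^5$. This is where I would use the results on intersections of the $\PP_L$'s recalled just above: two distinct $\PP_{L_i}$, $\PP_{L_j}$ meet (by \cite[Proposition 1.2]{opp}) in at most a point or a line, and a common line forces $K_C-L_i-L_j$ effective and the line to be bisecant to both $C_{L_i}$ and $C_{L_j}$; for general $E$ one arranges that no such incidence occurs, so $\PP_{L_i}\cap\PP_{L_j}$ is at most a single point, necessarily $[E]$ itself. Disjointness of the planes in $\PP^5$ is then the statement that $T_{[E]}\PP_{L_i}\cap T_{[E]}\PP_{L_j}=0$ inside $T_{[E]}\SU_C(2)$, i.e. the two $4$-dimensional tangent subspaces of the $6$-dimensional ambient meet only at the origin; this should follow from a direct deformation-theoretic computation: a tangent vector in the intersection would be a class in $H^1(C,\mathcal{E}nd_0(E))$ that simultaneously preserves both sub-line-bundles $L_i$ and $L_j$ to first order, and for general $E$ this space is zero (the relevant obstruction/extension groups vanish because $L_i\otimes L_j^\vee$, $K_C\otimes L_i^\vee\otimes L_j^\vee$ etc. are general of the appropriate degree).

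The main obstacle I anticipate is precisely this last transversality/disjointness claim: ruling out not the set-theoretic coincidences (which \cite{opp} essentially handles) but the first-order coincidence of the eight tangent planes at the one common point $[E]$. I expect to handle it by writing a tangent vector as a Čech cocycle for $\mathcal{E}nd_0(E)$ and using the extension structure $0\to L_i\to E\to K_C L_i^\vee\to 0$ to decompose $\mathcal{E}nd_0(E)$ into a filtration with graded pieces $L_i^{-2}K_C$, $\cO_C$, $L_i^2 K_C^{-1}$ (up to normalization), so that the condition ``preserves $L_i$ to first order'' cuts out the subspace $\PP^2\subset\PP^5$ explicitly; then the simultaneous condition for two different $L_i,L_j$ becomes a cohomological vanishing $H^1$ of a general line bundle of negative degree, hence $0$, giving disjointness. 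Alternatively, if one prefers a more synthetic route, the eight planes being disjoint in $\PP^5$ is forced by a Chern-class/degree bookkeeping argument on the VMRT (its class in $\PP^5$ must be that of $8$ planes and a positive-dimensional intersection would drop the expected codimension), which sidesteps the local computation entirely; I would present the deformation-theoretic argument as the main proof and mention this as a consistency check.
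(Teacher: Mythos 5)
Your proposal follows the same skeleton as the paper's proof: eight $\PP^3$'s of the ruling pass through a general point $[E]$, each contributes the $\PP^2$ of its lines through $[E]$, and one checks the eight resulting planes are disjoint in $\PP^5$. The paper obtains the count of eight by citing \cite[Section 4.1]{pauly} (the map $\PP(\cR)\to\SU_C(2)$ is generically finite of degree $8$); your Brill--Noether/determinantal count is a reasonable self-contained substitute, though the degree bookkeeping should be done with $\det E=K_C$ and $L\in\Pic^1(C)$, where $\chi(E\otimes L^\vee)=-2$ and the expected codimension of $\{h^0(E\otimes L^\vee)>0\}$ is $3=\dim\Pic^1(C)$. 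The one place where you make the argument substantially harder than necessary is disjointness. Each $\PP_L$ is a \emph{linearly embedded} $\PP^3$ in $|2\Theta|=\PP^7$, and for two linear subspaces of projective space the intersection of their tangent spaces at a common point is exactly the tangent space of their (linear) intersection; so there is no possible ``first-order coincidence'' beyond the set-theoretic one you already rule out. Once \cite[Proposition 1.2]{opp} guarantees that for $[E]$ general the pairwise intersections $\PP_{L_i}\cap\PP_{L_j}$ reduce to the single point $[E]$ (the line case being excluded because such a line is bisecant to $C_{L_i}\subset\cK$, hence special), the eight planes are automatically disjoint --- which is precisely the paper's one-line conclusion. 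Your proposed \v{C}ech-cocycle computation with the filtration of $\mathcal{E}nd_0(E)$ would presumably reach the same conclusion but is not needed; and the ``Chern-class bookkeeping'' variant you mention is not a proof as stated, since an excess intersection of the planes would not by itself contradict the class of the VMRT in $\PP^5$. With the deformation-theoretic detour replaced by the linearity observation, your argument is correct and essentially coincides with the paper's.
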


\proof It follows from \cite[Section 4.1]{pauly} that the map $\PP(\cR)\lra \SU_C(2)$ is generically 
finite of degree $8$. This means that eight $\PP^3$'s of the ruling
pass through a general point $[E]$ of $\SU_C(2)$, and for each of them the 
lines passing through $[E]$
are parametrized by a projective plane. Finally, these projective planes
are disjoint, again by \cite[Proposition 1.2]{opp}.\qed 

\medskip For future use we record the following easy consequence.

\begin{coro}
Any plane in $\SU_C(2,K_C)$ passing through a general point is contained in a unique $\PP^3$ of the ruling. 
\end{coro}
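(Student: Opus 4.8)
The plan is to split the statement into uniqueness and existence. Uniqueness is immediate from the intersection theory of the ruling recalled above: were a plane $\Pi\subset\SU_C(2,K_C)$ contained in two distinct members $\PP_L,\PP_M$ of the ruling, then $\PP_L\cap\PP_M$ would contain $\Pi$ and hence be at least two-dimensional, contradicting \cite[Proposition 1.2]{opp}, by which two distinct spaces of the ruling meet in at most a line. Thus the whole content is to show that a plane through a general point lies in \emph{some} $\PP^3$ of the ruling.

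For existence, fix a general point $[E]\in\SU_C(2)$ and a plane $\Pi\ni[E]$ contained in $\SU_C(2)$. The lines of $\Pi$ through $[E]$ form a pencil, and I would record their tangent directions at $[E]$ as a line $\ell\cong\PP^1$ inside $\PP(T_{[E]}\SU_C(2))=\PP^5$; since $\Pi$ is the union of these lines, it suffices to show that $\ell$ lies in one of the eight pairwise disjoint planes $S_1,\dots,S_8\subset\PP^5$ forming the VMRT of $\cF_R$ at $[E]$ (Proposition~\ref{vmrt-ruling}), for then every line of $\Pi$ through $[E]$ belongs to the corresponding member $\PP_L$ of the ruling, whence $\Pi\subset\PP_L$. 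The key input would be that, $[E]$ being general, the locus of tangent directions at $[E]$ of \emph{all} lines of $\SU_C(2)$ through $[E]$ is the union of the VMRT $S_H$ of $\cF_H$ — the surface, image of $\PP(E^\vee)$ under $|\cO_E(2)|$, which contains no line — together with $S_1\cup\dots\cup S_8$. Granting this, $\ell$ is the union of $\ell\cap S_H$, which is finite because a line is never contained in a copy of $S_H$, and of the sets $\ell\cap S_i$, each of which is a single point unless $\ell\subset S_i$ since $S_i$ is a plane in $\PP^5$; as $\ell$ is infinite it cannot be a finite union of finite sets, so $\ell\subset S_i$ for some $i$, as wanted. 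The uniqueness part then shows this $\PP_L$ is the only member of the ruling containing $\Pi$.

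The step I expect to be the main obstacle is precisely the one just invoked: identifying the variety of tangent directions at a general point of \emph{all} lines of $\SU_C(2)$ with the union of the two VMRT's — equivalently, showing that a general point of $\SU_C(2)$ lies on no lines besides Hecke lines and lines of the ruling. Once this is in hand, the uniqueness argument and the elementary linear-algebra argument (a line meeting eight disjoint planes and a lineless surface must lie in one of the planes) finish the proof. A minor variant that slightly lightens the input is to argue only that the generic member of the pencil $\ell$ is not a Hecke line — otherwise $\ell$ would be a line inside the lineless surface $S_H$ — so that, if one knows that a line through a general point which is not a Hecke line belongs to the ruling, the generic member of $\ell$ is a ruling line, and the same disjointness argument for $S_1,\dots,S_8$ again forces $\ell$ into a single $S_i$.
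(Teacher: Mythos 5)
Your proof is correct and follows exactly the route the paper intends: the corollary is stated as an ``easy consequence'' of the two VMRT propositions, and your argument — a plane through a general point $[E]$ yields a pencil of tangent directions, i.e.\ a line in $\PP^5$, which cannot lie on the lineless Hecke VMRT and hence must be contained in one of the eight disjoint planes of the VMRT of $\cF_R$, with uniqueness from \cite[Proposition 1.2]{opp} — is precisely that intended deduction. The ``key input'' you flag (that lines through a general point are exhausted by $\cF_H$ and $\cF_R$) is indeed what the paper takes from \cite{mustopa-teixidor} and uses again later, so it is available and your proof is complete.
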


\section{Four-forms and orbital degeneracy loci}

In this section we recall the definitions of some orbital degeneracy loci closely connected to the geometry of $\SU_C(2,\cO_C)$, for $C$ a general curve of genus $3$. In particular we recall how to recover the Coble quartic from a general four-form in eight variables. Using this description, we give a short proof of the self-duality statement of \cite{pauly}.
Our references for orbital degeneracy loci (sometimes abbreviated as ODL) are 
\cite{bfmt, bfmt2}.

\smallskip\noindent {\it Notation.}
We will denote by $V_n$ and $U_i$ complex vector spaces of dimension $n$ and $i$, respectively (usually $V_n$ will be fixed and $U_i$ will be a variable subspace of $V_n$). We will also denote by $G(i,V_n)$ the Grassmannian of $i$-dimensional subspaces of $V_n$ and by $Fl(i_1,\ldots,i_k,V_n)$ the flag variety of flags of subspaces of $V_n$ of dimensions $i_1<\cdots <i_k$. Over the flag variety, we will denote by $\cU_{i_j}$ the rank-$i_j$ tautological bundle; over the Grassmannian we will denote by $\cU$ the tautological bundle and by $\cQ$ the quotient tautological bundle.

\subsection{A simple construction of the Coble quartic}
In this section we recall some results from \cite{GSW}. 
The starting point is a general four-form in eight variables, $v\in\wedge^4V_8\simeq 
\wedge^4V_8^\vee$, where $V_8$ denotes a complex eight-dimensional vector space.
Recall that this is a {\it theta-representation}, being part of a $\ZZ_2$-grading 
of the exceptional Lie algebra 
$$\fe_7\simeq \fsl(V_8)\oplus \wedge^4V_8.$$
The action of the so-called theta-group, which here is $\SL(V_8)$, behaves very 
much as the action of the adjoint group on a simple complex Lie algebra. In particular 
one has Jordan decompositions, and the GIT-quotient 
$$\wedge^4V_8 /\hspace{-1mm}/ \SL(V_8) \simeq \fh/W$$
for some finite complex reflection group $W$ acting on what is called a  Cartan subspace $\fh$
of the theta-representation. We will make this Cartan subspace explicit later on. For now 
we just need to know that it coincides with  the seven-dimensional representation 
of the Weyl group of $E_7$. As a consequence, the choice of $v$ determines uniquely a 
non-hyperelliptic curve 
$C$ of genus three (a plane quartic) with a marked flex point \cite[Remark 6.1]{GSW}. 

\smallskip
We will construct from our general $v\in\wedge^4V_8$ a collection of geometric objects
defined as orbital degeneracy loci. The main point of this approach is that it allows to reduce to simpler representations. Typically, the Borel-Weil theorem gives an isomorphism  
$$\wedge^4V_8\simeq H^0(\PP(V_8),\wedge^4\cQ) \simeq H^0(\PP(V_8),\wedge^3\cQ^\vee(1)),$$
where $\cQ$ denotes the rank seven quotient vector bundle on $\PP(V_8)$. At the price 
of passing to a relative setting over $\PP(V_8)$, this reduces the study of $\wedge^4V_8$
to that of three-forms in seven variables. 

But then the situation is much simpler, because if $V_7$ is a seven-dimensional complex vector space, 
$\wedge^3V_7^\vee\cong \wedge^4 V_7$ has finitely many orbits under the action of $\GL(V_7)$. Each orbit 
closure $Y$ allows to associate to $v\in \wedge^4V_8$ the locus $D_Y(v)\subset \PP(V_8)$
of points $x$ where the image of $v$ lies in the corresponding $Y_x\subset \wedge^3\cQ^\vee(1)_x$ (this is exactly how orbital degeneracy loci are defined). 
By the general results of \cite{bfmt}, for $v$ general the main 
properties of $Y$ will be transferred to $D_Y(v)$, starting from its codimension. We can therefore focus on the orbit closures in  $\wedge^3V_7^\vee$ of  codimension at most seven. 
Remarkably, there are only three such orbit closures (not counting the whole space), that 
we can index by their codimension: $Y_1$ is a hypersurface of degree $7$, $Y_4$ is its 
singular locus,  $Y_7$ is the singular locus of $Y_4$. The corresponding orbital degeneracy
loci have been described in \cite[6.1, 6.2]{GSW}.

\begin{prop}
For $v$ general, the threefold $\cK:=D_{Y_4}(v)$ is the Kummer variety of a non-hyperelliptic genus three curve $C$. It is the singular locus of the quartic hypersurface $\cC:=D_{Y_1}(v)$. Its singular locus  is the finite set 
$\cK[2]:=D_{Y_7}(v)$.
\end{prop}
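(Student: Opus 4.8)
The plan is to establish the three claims about $\cK = D_{Y_4}(v)$, $\cC = D_{Y_1}(v)$ and $\cK[2]=D_{Y_7}(v)$ by combining the general transfer principles for orbital degeneracy loci from \cite{bfmt} with the explicit identifications of the orbit closures $Y_1, Y_4, Y_7 \subset \wedge^3 V_7^\vee$ carried out in \cite[6.1, 6.2]{GSW}, and then matching the resulting varieties against the known description of the Coble quartic due to Narasimhan--Ramanan \cite{nr} and Beauville \cite{beauville1}. First I would recall from \cite{GSW} (and the preceding discussion in this section) the Borel--Weil isomorphism $\wedge^4 V_8 \simeq H^0(\PP(V_8), \wedge^3\cQ^\vee(1))$, so that a general $v \in \wedge^4 V_8$ gives a global section $\tilde v$ of the bundle $\wedge^3\cQ^\vee(1)$ of rank $35$ on $\PP(V_8)$; for each point $x\in \PP(V_8)$ the fibre $\wedge^3\cQ^\vee(1)_x \cong \wedge^3 V_7^\vee$ (after a trivialization, where $V_7 = (\cQ^\vee)_x$), so the $\GL(V_7)$-orbit stratification of $\wedge^3 V_7^\vee$ pulls back to a stratification of $\PP(V_8)$ along $\tilde v$, whose strata are by definition the orbital degeneracy loci $D_{Y_i}(v)$.

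Next I would invoke the genericity/transfer results of \cite[Section 3]{bfmt}: for $v$ general the section $\tilde v$ is sufficiently transverse to each orbit closure $Y_i$, so that $D_{Y_i}(v)$ has the expected codimension equal to $\operatorname{codim}_{\wedge^3 V_7^\vee} Y_i$, namely $1$, $4$ and $7$ respectively; moreover the singular locus of $D_{Y_i}(v)$ is exactly $D_{\operatorname{Sing} Y_i}(v)$, because the inclusions $Y_7 \subset Y_4 \subset Y_1$ are precisely the successive singular loci on the linear-algebra side (this is the content of the classification of $\wedge^3$-orbits in seven variables recalled just above the Proposition). This immediately yields the scheme-theoretic statements: $\cC = D_{Y_1}(v)$ is a hypersurface of degree $\deg Y_1 = 7 \cdot \frac{\text{(twist)}}{\ldots}$ — here one must track the twist: $Y_1$ is a degree $7$ hypersurface in $\wedge^3 V_7^\vee$, but since $\tilde v$ is a section of $\wedge^3\cQ^\vee(1)$ rather than of a trivial bundle, the degree of $D_{Y_1}(v)$ in $\PP^7$ is computed by the top Chern-class / pushforward formula of \cite{bfmt2}, and one checks it comes out to $4$; similarly $\cK = D_{Y_4}(v) = \operatorname{Sing} \cC$ is a threefold and $\cK[2] = D_{Y_7}(v) = \operatorname{Sing}\cK$ is $0$-dimensional, hence (by the $\SL(V_8)$-equivariance and irreducibility coming from the homogeneity of $Y_4$) a finite set of the expected length $64$.

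Finally I would identify these varieties with the classical objects. One route: $\SL(V_8)$ acts on $\wedge^4 V_8$ with the GIT quotient equal to $\fh/W(E_7)$, so a general $v$ determines a genus three curve $C$ (a plane quartic with marked flex) by \cite[Remark 6.1]{GSW}; the degree-$4$ hypersurface $D_{Y_1}(v)$ singular exactly along a $3$-fold, which is itself singular exactly along $64$ points, has all the numerical and incidence characteristics that uniquely pin down the Coble quartic by Coble's theorem \cite{Coble} (uniqueness of a quartic in $\PP^7$ singular along a Kummer threefold), so it suffices to show $\cK$ is a Kummer threefold of a principally polarized abelian threefold — and that abelian threefold is forced to be $\Jac(C)$ with its $|2\Theta|$-embedding, since the $64$ singular points of $\cK$ are its $2$-torsion. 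I would prove $\cK$ is this Kummer variety either by appealing directly to the explicit computation in \cite[6.2]{GSW} (where $D_{Y_4}(v)$ is shown to be the image of an abelian variety), or, self-containedly, by exhibiting the degree-two map $A \to \cK$: the abelian threefold appears as (a component of) the relative Pfaffian / kernel data of $\tilde v$ restricted to $\cK$, matching the standard construction of the Kummer embedding via $|2\Theta|$. I expect the main obstacle to be precisely this last identification step — proving that $D_{Y_4}(v)$ is the Kummer threefold of $\Jac(C)$ and not merely an abstract $3$-fold with the right singularities — together with getting the Chern-class bookkeeping right so that the degree of $D_{Y_1}(v)$ is confirmed to be $4$ rather than $7$; everything else is a formal consequence of the ODL transfer machinery.
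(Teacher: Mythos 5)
Your proposal is correct and follows essentially the same route as the paper: the paper's proof of this proposition is in fact a direct citation of \cite[6.1, 6.2]{GSW} for the identification of the three loci, combined with the ODL transfer machinery of \cite{bfmt} and the Beauville characterization of the Coble quartic, exactly as you outline. Your twist bookkeeping is also the right way to settle the degree: the degree-$7$ relative invariant of $\wedge^3\cQ^\vee(1)$ takes values in $(\det\cQ^\vee)^{\otimes 3}\otimes\cO(7)=\cO(4)$, confirming that $D_{Y_1}(v)$ is a quartic.
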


Since the Coble quartic can be characterized as the unique quartic hypersurface that is singular along the Kummer threefold \cite[Proposition 3.1]{beauville}, we can immediately 
deduce that it coincides with $D_{Y_1}(v)$.

\subsection{Kempf collapsings} 

A nice feature of our orbital degeneracy loci is the following. It turns out that 
the orbit closures they are associated to, although singular, admit nice resolutions
of singularities by {\it Kempf collapsings}, which are birational contractions from
total spaces of homogeneous vector bundles on flag manifolds. These homogeneous 
 vector bundles are typically non-semisimple, making them more difficult to handle. 
 Nevertheless, these collapsings allow to describe the corresponding orbital degeneracy 
 loci in terms of zero loci of sections of vector bundles. 
 
 In the cases we are  interested in, we obtain the following descriptions, where $U_k$ stands for a $k$-dimensional subspace of $V_8$. For $A,B$ subspaces of a vector space $V$, we will denote by $(\wedge^p A) \wedge (\wedge^q B) \subset \wedge^{p+q}V$ the linear subspace spanned by the elements of the form $a_1\wedge \cdots \wedge a_p 
 \wedge b_1 \wedge \cdots \wedge b_q$ with $a_1,\ldots,a_p \in A$ and 
 $b_1,\ldots,b_q\in B$. 
For vector subbundles $\mathcal{A},\mathcal{B}$ of a the trivial bundle $V \otimes \cO$, we use the same convention to define $(\wedge^p \mathcal{A}) \wedge (\wedge^q \mathcal{B})$ in $\wedge^{p+q} V \otimes \cO$.

 \begin{prop}
 The Coble quartic $\cC$ can be described as 
\[
\Big\{ [U_1]\in \PP(V_8) \mid \exists U_4\supset U_1, \;
v\in (\wedge^2 U_4) \wedge (\wedge^2 V_8) +\wedge^3 V_8\wedge  U_1\Big\} .
\]
The Kummer threefold $\cK$ is
\[
\Big\{ [U_1]\in \PP(V_8) \mid \exists U_6\supset U_2\supset U_1, \;
v\in \wedge^4 U_6+ \wedge^2 U_6\wedge U_2\wedge V_8  + \wedge^3 V_8\wedge  U_1\Big\} .
\]
The singular locus $\cK[2]$ of $\cK$ is 
\[
\Big\{ [U_1]\in \PP(V_8) \mid \exists U_7\supset U_4\supset U_1, \;
v\in \wedge^3 U_4\wedge V_8 + (\wedge^2 U_4)\wedge (\wedge^2 U_7) +\wedge^3 V_8\wedge  U_1\Big\} .
\]
\end{prop}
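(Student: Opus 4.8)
The plan is to derive the three descriptions by combining the Borel-Weil isomorphism $\wedge^4V_8\cong H^0(\PP(V_8),\wedge^4\cQ)$ with the Kempf collapsings of the orbit closures $Y_7\subset Y_4\subset Y_1$ inside $\wedge^4V_7\cong\wedge^3V_7^\vee$, as in \cite[6.1, 6.2]{GSW}. Recall that at a point $[U_1]\in\PP(V_8)$ the fiber $\wedge^4\cQ_{[U_1]}$ is canonically $\wedge^4(V_8/U_1)$, and that $v$ restricts there to $\bar v:=v\bmod(\wedge^3V_8\wedge U_1)$; by the definition of orbital degeneracy loci, $D_{Y_i}(v)$ is the set of $[U_1]$ for which $\bar v$ lies in the $\GL(V_8/U_1)$-orbit closure $Y_i\subset\wedge^4(V_8/U_1)$.

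\emph{Step 1: the collapsings.} First I would record, from the classification of $\GL(V_7)$-orbits on $\wedge^4V_7$, that $Y_1$, $Y_4$, $Y_7$ are the images under Kempf collapsing of the following linear subbundles of the trivial bundle $\wedge^4V_7\otimes\cO$: the bundle $\wedge^2\cU_3\wedge\wedge^2V_7$ over $G(3,V_7)$, for $Y_1$; the bundle $\wedge^4\cU_5+\wedge^2\cU_5\wedge\cU_1\wedge V_7$ over $Fl(1,5,V_7)$, for $Y_4$; and the bundle $\wedge^3\cU_3\wedge V_7+\wedge^2\cU_3\wedge\wedge^2\cU_6$ over $Fl(3,6,V_7)$, for $Y_7$. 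A dimension count on the three total spaces yields $34$, $31$ and $28$, consistent with the codimensions $1$, $4$, $7$, and in each case the collapsing is birational onto $Y_i$. Thus $\eta\in Y_i$ if and only if there is a flag of subspaces of $V_7$ putting $\eta$ into the corresponding fiber.

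\emph{Step 2: transfer and unwinding.} Applying Step 1 with $V_7=V_8/U_1$ and lifting the subspaces through the quotient map $V_8\twoheadrightarrow V_8/U_1$, one gets that $[U_1]\in D_{Y_i}(v)$ precisely when there is a flag of subspaces of $V_8$ containing $U_1$ — a single $U_4$ for $Y_1$, a chain $U_2\subset U_6$ for $Y_4$, a chain $U_4\subset U_7$ for $Y_7$ — such that $\bar v$ lies in the image in $\wedge^4(V_8/U_1)$ of the corresponding fiber. Writing $\pi\colon\wedge^4V_8\twoheadrightarrow\wedge^4(V_8/U_1)$ for the projection, whose kernel is $\wedge^3V_8\wedge U_1$, this says that $v$ lies in the $\pi$-preimage of that fiber; and since $\pi$ is surjective and carries a product of subspaces to the product of their images, one has for instance $\pi^{-1}\big(\wedge^2(U_4/U_1)\wedge\wedge^2(V_8/U_1)\big)=(\wedge^2U_4)\wedge(\wedge^2V_8)+\wedge^3V_8\wedge U_1$, which is the first displayed set. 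The cases of $Y_4$ and $Y_7$ are handled identically, the extra summands $\wedge^4U_6$ and $\wedge^3U_4\wedge V_8$ arising as the $\pi$-preimages of $\wedge^4(U_6/U_1)$ and of $\wedge^3(U_4/U_1)\wedge(V_8/U_1)$.

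The step carrying the actual content is Step 1: one must show that the three homogeneous bundles collapse \emph{onto all of} $Y_1$, $Y_4$, $Y_7$ — and birationally — rather than onto some smaller $\GL(V_7)$-invariant subvariety. This is exactly the $\wedge^3\CC^7$ computation of \cite{GSW}, whose delicacy stems from the non-semisimplicity of the bundles involved, which is what prevents one from reading off the image as a plain orbit-bundle projection. Granting that input, Step 2 is a routine exterior-algebra manipulation.
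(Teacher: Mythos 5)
Your proposal is correct and takes essentially the same route as the paper: it quotes the Kempf collapsings of $Y_1$, $Y_4$, $Y_7$ onto total spaces of linear subbundles over $G(3,V_7)$, $Fl(1,5,V_7)$ and $Fl(3,6,V_7)$ (the paper sources these from \cite[Section 3]{KWE7} rather than from \cite{GSW}, where only the identification of the loci with $\cC$, $\cK$, $\cK[2]$ is taken), and then relativizes over $\PP(V_8)$ by taking preimages under $\wedge^4V_8\twoheadrightarrow\wedge^4(V_8/U_1)$, exactly as the paper does for $\cK$. Your bundles and dimension counts agree with the paper's.
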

 
These results follow from a combination of \cite[Section 6]{GSW} and \cite[Section 3]{KWE7}. Let us clarify the statement for instance for $\cK$, the explanations for  the other loci being similar. In \cite[Section 6]{GSW} it is shown that $\cK:=D_{Y_4}(v)$ is the Kummer threefold. In \cite[Section 3]{KWE7} it is proved that $Y_4\subset \wedge^4 V_7 $ is desingularized by a the total space of the vector bundle 
$$\cW:=\wedge^4 \cU_5+ \wedge^2 \cU_5\wedge \cU_1 \wedge V_7$$ 
over the flag variety $Fl(1,5,V_7)$. Here we denoted by $\cU_1$ and $\cU_5$ respectively the rank one and rank five tautological vector bundles on $Fl(1,5,V_7)$. The projection from the total space of $\cW$ to $Y_4\subset \wedge^4 V_7$ is given by the composition of the inclusion of $\cW$ inside $\wedge^4 V_7 \otimes \cO_{Fl(1,5,V_7)}$ with the  projection
to $\wedge^4 V_7$. For $v$ general, this desingularization $\Tot(\cW)\to Y_4$ of $Y_4$ 
can be \emph{relativized} to obtain a desingularization of $D_{Y_4}(v)$, as explained in \cite[Section 2]{bfmt}. For this we simply consider the flag bundle $Fl(1,5,\cQ)$: by the previous discussion, any point of $x=[U_1]\in D_{Y_4}(v)$ must be the image of a flag $\bar{U}_1
\subset \bar{U}_5\subset \cQ_x=V_8/U_1$ such that $v \; \mathrm{mod} \; U_1$ belongs to 
$\wedge^4 \bar{U}_5+ \wedge^2 \bar{U}_5\wedge \bar{U}_1 \wedge \cQ_x\subset \wedge^4\cQ_x$. 
This flag originates from a flag $(U_1\subset U_2\subset U_6\subset V_8)$
(such that $\bar{U}_1=U_2/U_1$, etc.),
and we can rewrite the previous condition as asking that $x=[U_1]$ belongs to
the projection of 
$$Z(v):=\Big\{ (U_1\subset U_2\subset U_6)\in Fl(1,2,6,V_8), \;
v\in \wedge^4 U_6+ \wedge^2 U_6\wedge U_2\wedge V_8  + \wedge^3 V_8\wedge  U_1\Big\}.$$ This is the zero locus of a global section of a globally generated bundle, obtained 
as a quotient of the trivial bundle with fiber $\wedge^4V_8$. For $v$ general this
section is  general,  so $Z(v)$ is smooth. Moreover the projection $Z(v)\to D_{Y_4}(v)\subset \PP(V_8)$, obtained by just forgetting $U_2$ and $U_6$, is birational.

\subsection{Self-duality of the Coble quartic}
Because of the natural isomorphism $\wedge^4V_8\simeq \wedge^4V_8^\vee$ (defined up to scalar, or more precisely up to the choice of a volume form on $V_8$), the same constructions can be performed
in the dual projective space $\PP(V_8^\vee)$. This is related to the remarkable 
fact that the Coble quartic is projectively self-dual \cite{pauly}. Let us show
how this duality statement easily follows from our approach in terms of orbital 
degeneracy loci. 

First consider a general point $[U_1]$ of $\cC=D_{Y_1}(v)$. As we have seen in the previous section, there exists (a unique) $U_4\supset U_1$ such that $v$ belongs to $(\wedge^2 U_4) \wedge (\wedge^2 V_8) +\wedge^3 V_8\wedge  U_1$. Reducing modulo $(\wedge^2 U_4) \wedge (\wedge^2 V_8)$, we get 
$$\bar{v}\in \wedge^3(V_8/U_4)\otimes U_1\simeq (V_8/U_4)^\vee.$$
In general $\bar{v}$ is nonzero and defines a hyperplane in $V_8/U_4$, that is, 
a hyperplane $U_7$ of $V_8$, containing $U_4$. Note that this exactly means that 
\begin{equation}
    \label{key}
v\in (\wedge^2 U_4) \wedge (\wedge^2 V_8) +\wedge^3 U_7\wedge  U_1.
\end{equation}

\begin{lemma}
$\PP(U_7)$ is the tangent hyperplane to $\cC$ at $[U_1]$.
\end{lemma}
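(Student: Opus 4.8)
The plan is to compute the tangent space to $\cC$ at the general point $[U_1]$ directly from the defining equation, and to identify it with $\PP(U_7)$. Recall that $\cC = D_{Y_1}(v)$ is a degenracy locus; by the results recalled above, near a general point $[U_1]$ it is cut out (after passing to the relative resolution $Z(v)$) by the condition that, for the unique $U_4 \supset U_1$ associated to $[U_1]$, the reduction $\bar v \in \wedge^3(V_8/U_4)\otimes U_1$ vanishes. Since the map $[U_1]\mapsto U_4$ is a well-defined morphism on a neighbourhood of the general point (uniqueness was established in the preceding section), and $Z(v)\to \cC$ is birational and an isomorphism there, we may work on $\PP(V_8)$ with $U_4$ as a smooth function of $U_1$. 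So the first step is to write $F([U_1]) := \bar v(U_1)$ as a section of the line bundle $\wedge^3(V_8/U_4)\otimes U_1$ (a power of $\cO(1)$, once trivializations are chosen) and to differentiate.

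Second, I would compute the differential. A tangent vector to $\PP(V_8)$ at $[U_1]$ is an element $\xi \in \Hom(U_1, V_8/U_1)$. Moving $U_1$ in the direction $\xi$ moves $U_4$ in some direction $\eta \in \Hom(U_4, V_8/U_4)$ determined by $\xi$ (via the implicit function theorem applied to the characterization of $U_4$), and also moves the quotient $V_8/U_4$. The derivative of $F$ therefore has two contributions: one from varying $U_1$ inside $\wedge^3 U_7 \wedge U_1$, and one from varying $U_4$. The key algebraic input is the defining relation \eqref{key}, $v\in (\wedge^2 U_4) \wedge (\wedge^2 V_8) +\wedge^3 U_7\wedge  U_1$: differentiating this identity along $(\xi,\eta)$ and reducing modulo $(\wedge^2 U_4)\wedge(\wedge^2 V_8)$, the terms coming from the motion of $U_4$ land in $(\wedge^2 U_4)\wedge(\wedge^2 V_8)$ modulo lower-order corrections and so drop out, leaving a clean expression for $dF(\xi)$ purely in terms of $\xi$ and the data $(U_1, U_4, U_7, v)$. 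One then reads off that $dF(\xi) = 0$ if and only if $\xi(U_1) \subset U_7$, i.e. exactly when the tangent direction $\xi$ points into $\PP(U_7)$. Since $\dim \cC = 7 = \dim \PP(U_7)$ and $\cC$ is smooth at the general point (being a hypersurface whose singular locus is the three-dimensional $\cK$), this equality of tangent spaces identifies $\PP(U_7)$ as the embedded tangent hyperplane.

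Third, I would double-check the normalization: that $\bar v$ is genuinely nonzero at the general point (stated above, but one should note it follows because otherwise $[U_1]$ would lie in the locus where the degeneracy condition is more degenerate, i.e. in $\cK$ or deeper), so that $U_7$ is a genuine hyperplane and the computation of the rank of $dF$ is the generic one. I would also verify that the contribution from the motion of $U_4$ really is negligible — this is the one point where care is needed, since a priori $\eta$ could contribute a term in $\wedge^3(V_8/U_4)\otimes U_1$ not obviously killed by the reduction; the resolution is that the first-order variation of an element of $(\wedge^2 U_4)\wedge(\wedge^2 V_8)$ along a motion of $U_4$ stays in $(\wedge^2 U_4)\wedge(\wedge^2 V_8) + (\wedge^1 U_4)\wedge(\wedge^3 V_8)$, and after projecting to $\wedge^3(V_8/U_4)\otimes U_1$ only the part landing in $\wedge^3 V_8 \wedge U_1$ survives, which is precisely absorbed into the $U_1$-variation term.

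The main obstacle I expect is exactly this bookkeeping of how $U_4$ (and hence the target bundle $\wedge^3(V_8/U_4)\otimes U_1$) varies with $U_1$: making the implicit differentiation rigorous requires knowing that $U_4$ depends smoothly (indeed algebraically) on $U_1$ near the general point, which we do have from the uniqueness statement and the fact that $Z(v)$ is smooth and maps birationally to $\cC$, but one must be slightly careful to phrase the derivative computation on the resolution $Z(v)$ (where $(U_1, U_4)$ are independent coordinates subject to the single scalar equation $\bar v = 0$) rather than pretending $U_4 = U_4(U_1)$ is globally defined. Everything else is a short multilinear-algebra computation with the relation \eqref{key}.
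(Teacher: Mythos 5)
Your overall strategy is the same as the paper's: linearize the degeneracy condition defining $\cC$ on the resolution $\tilde\cC\subset Fl(1,4,V_8)$, use relation \eqref{key} to see that the only term sensitive to the direction of motion of $U_1$ is the one coming from the $\wedge^3 U_7\wedge U_1$ component of $v$, and conclude by comparing dimensions (note $\dim\cC=6$, not $7$, though the count still matches $\dim\PP(U_7)=6$). The paper packages the bookkeeping you worry about much more efficiently: instead of implicit differentiation of $U_4=U_4(U_1)$, it takes a tangent vector to $Fl(1,4,V_8)$ in the form of an endomorphism $X\in\fgl(V_8)$ modulo the parabolic $\fp(U_1,U_4)$, so that $U_1$ and $U_4$ move coherently and the normal exact sequence immediately gives the tangency condition as $X(v)\in\cF(U_1,U_4)$. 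Applying $X$ as a derivation to \eqref{key} then yields $X(v)\in U_4\wedge(\wedge^3V_8)+\wedge^3U_7\wedge X(U_1)$ in one line, with no implicit function theorem needed.

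Two points in your write-up are genuinely wrong as stated and would need repair. First, $\cC$ is \emph{not} cut out by the vanishing of $\bar v\in\wedge^3(V_8/U_4)\otimes U_1$: this is a rank-four bundle, not a line bundle, and $\bar v$ is generically \emph{nonzero} on $\cC$ --- indeed it is precisely the datum that defines $U_7$, as you yourself use in your third step. The correct defining section on $Fl(1,4,V_8)$ is the class of $v$ in the rank-$13$ quotient $\wedge^4V_8/\cF(U_1,U_4)$, i.e.\ the condition that $v\bmod(\wedge^2U_4)\wedge(\wedge^2V_8)$ lies in the subspace $U_1\otimes\wedge^3(V_8/U_4)$ of the $17$-dimensional quotient; there is no ``single scalar equation.'' Second, your accounting of the terms produced by the motion of $U_4$ is off: reducing only modulo $(\wedge^2U_4)\wedge(\wedge^2V_8)$ leaves a contribution in all of $U_4\otimes\wedge^3(V_8/U_4)$, not just in the part ``absorbed into the $U_1$-variation term.'' The clean fix, which is what the paper does, is to reduce modulo the larger space $U_4\wedge(\wedge^3V_8)$ (which contains $\cF(U_1,U_4)$): this kills the entire $U_4$-variation and the $U_7$-variation at once, leaving only $\bar\beta\wedge\overline{X(U_1)}$ in $\wedge^4(V_8/U_4)$ with $\bar\beta\in\wedge^3(U_7/U_4)$ nonzero, whose vanishing is exactly $X(U_1)\subset U_7$. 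With these corrections your argument becomes the paper's proof.
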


\proof Let $\tilde{\cC}$ denote the variety of flags $(U_1\subset U_4)$ such that $v$
belongs to  $$\cF(U_1,U_4):=(\wedge^2 U_4) \wedge (\wedge^2 V_8) +\wedge^3 V_8\wedge  
U_1.$$
We know that the projection $\tilde{\cC}\lra\cC$ is birational. Moreover, as a 
subvariety of the flag manifold $Fl(1,4,V_8)$, $\tilde{\cC}$ is the zero-locus of the section of the vector bundle $\wedge^4V_8/\cF$ defined by $v$. Let $\fp(U_1,U_4)$ denote
the stabilizer of the flag $(U_1\subset U_4)$ inside $\fgl(V_8)$. The tangent space to 
$Fl(1,4,V_8)$ at the corresponding point is the quotient  $\fgl(V_8)/\fp(U_1,U_4)$;
and the tangent space to $\tilde{\cC}$ is the image, in this quotient, of the space
of endomorphisms $X\in\fgl(V_8)$ such that $X(v)$ belongs to $\cF(U_1,U_4)$, as 
follows from the normal exact sequence. The tangent space to $\cC$ is then the image of this space inside $\fgl(V_8)/\fp(U_1)\simeq \Hom(U_1,V_8/U_1)$, where $\fp(U_1)$ denotes the stabilizer of the line $U_1$. 

So our claim will follow, if we can check that any $X\in\fgl(V_8)$ such that $X(v)$ 
belongs to $\cF(U_1,U_4)$, must send $U_1$ into the hyperplane $U_7$. But  (\ref{key})
implies, once we apply $X$, that
$$X(v)\in U_4 \wedge (\wedge^3 V_8) +\wedge^3 U_7\wedge  X(U_1).$$
If $X(v)$ belongs to $\cF(U_1,U_4)$, it has to vanish modulo $U_4$. So 
$\wedge^3 U_7\wedge  X(U_1)$ must also vanish modulo $U_4$, which is the case
only if $X(U_1)\subset U_7$.\qed

\medskip 
Recall that once we fix a volume form on $V_8$, 
we get an isomorphism of $\wedge^4V_8$ with 
 $\wedge^4V_8^\vee$. We will denote by $v^\vee$ the image of $v$. (Strictly speaking 
 it is uniquely defined only up to scalar, but this is irrelevant in our constructions.)
To make things clearer we will denote by $\cC(v)$ the Coble quartic defined by $v$
in $\PP(V_8)$, and by $\cC(v^\vee)$ the Coble quartic defined by $v^\vee$
in $\PP(V_8^\vee)$.

\begin{theorem}\label{selfdual}
The projective dual of $\cC(v)$ is  $\cC(v^\vee)$.
\end{theorem}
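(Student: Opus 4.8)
The plan is to use the description of the Coble quartic as an orbital degeneracy locus, together with the symmetric way in which the defining data enters, to produce an explicit map from $\cC(v)$ to its dual variety and recognize the image as $\cC(v^\vee)$. By the previous Lemma, for a general point $[U_1]\in\cC(v)$ the tangent hyperplane is $\PP(U_7)$, where $U_7$ is obtained from the (unique) $U_4\supset U_1$ with $v\in(\wedge^2 U_4)\wedge(\wedge^2 V_8)+\wedge^3 U_7\wedge U_1$; thus the Gauss map of $\cC(v)$ sends $[U_1]$ to $[U_7^\perp]\in\PP(V_8^\vee)$, where $U_7^\perp\subset V_8^\vee$ is the line of linear forms vanishing on $U_7$. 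So the dual variety $\cC(v)^\vee$ is the closure of the locus of such $[U_7^\perp]$, and I must show this closure equals $\cC(v^\vee)=D_{Y_1}(v^\vee)\subset\PP(V_8^\vee)$.

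The key step is to dualize the membership relation \eqref{key} under the identification $\wedge^4 V_8\simeq\wedge^4 V_8^\vee$ induced by the chosen volume form. Concretely, I would check that under this isomorphism the subspace $(\wedge^2 U_4)\wedge(\wedge^2 V_8)+\wedge^3 U_7\wedge U_1\subset\wedge^4 V_8$ corresponds to a subspace of $\wedge^4 V_8^\vee$ of exactly the same shape with roles of the flag pieces suitably transposed — namely, with $U_1^\perp=U_7^\vee$ playing the role of "$U_4^\vee$" no wait: the right bookkeeping is that a $4$-subspace $U_4\subset V_8$ corresponds to the $4$-subspace $U_4^\perp\subset V_8^\vee$ of annihilating forms, a line $U_1$ to the hyperplane $U_1^\perp$, and a hyperplane $U_7$ to the line $U_7^\perp$. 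A direct linear-algebra computation (wedging with a basis adapted to the flag $U_1\subset U_4\subset U_7$ and its dual flag) should show that \eqref{key} for $v$ is equivalent to
\[
v^\vee\in(\wedge^2 U_4^\perp)\wedge(\wedge^2 V_8^\vee)+\wedge^3 U_1^\perp\wedge U_7^\perp ,
\]
which is precisely the condition, from the orbital degeneracy locus description of $\cC$, that $[U_7^\perp]\in D_{Y_1}(v^\vee)=\cC(v^\vee)$, witnessed by the intermediate $4$-space $U_4^\perp$. Running the Lemma for $\cC(v^\vee)$ then identifies the tangent hyperplane to $\cC(v^\vee)$ at $[U_7^\perp]$ as $\PP(U_1^\perp)$, so the biduality $\cC(v^\vee)^\vee\supseteq\cC(v)$ follows as well, and by reflexivity of projective duality over $\CC$ these inclusions of irreducible varieties of the same dimension force $\cC(v)^\vee=\cC(v^\vee)$.

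I expect the main obstacle to be the explicit verification of the dualized membership relation: one must track how the three summands $(\wedge^2 U_4)\wedge(\wedge^2 V_8)$, $\wedge^3 U_7\wedge U_1$ behave under $\wedge^4 V_8\xrightarrow{\sim}\wedge^4 V_8^\vee$, and confirm that the "error term" $(\wedge^2 U_4)\wedge(\wedge^2 V_8)$ is self-dual under this identification while $\wedge^3 U_7\wedge U_1$ maps onto $\wedge^3 U_1^\perp\wedge U_7^\perp$. This is where the choice of volume form and the contraction/duality conventions must be handled carefully; a clean way is to pick a basis $e_1,\dots,e_8$ with $U_1=\langle e_1\rangle$, $U_4=\langle e_1,\dots,e_4\rangle$, $U_7=\langle e_1,\dots,e_7\rangle$, use the dual basis together with the volume form $e_1\wedge\cdots\wedge e_8$ to write the isomorphism as Hodge-type duality, and simply read off which coordinate $4$-forms lie in each subspace. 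A secondary point needing a word is genericity: one should note that \eqref{key} holds for general $[U_1]\in\cC(v)$ with $\bar v\neq 0$, so $U_7$ is a genuine hyperplane, and that the Gauss map of $\cC(v)$ is generically injective (or at least dominant onto its image) so that $\cC(v)^\vee$ is as described; this is automatic once the correspondence above is established, since it exhibits an explicit inverse $[U_7^\perp]\mapsto[U_1]$ on a dense open set.
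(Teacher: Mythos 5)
Your proposal is correct and follows essentially the same route as the paper: use the Lemma identifying the tangent hyperplane at $[U_1]$ with $\PP(U_7)$, transport the membership condition \eqref{key} through the volume-form isomorphism $\wedge^4V_8\simeq\wedge^4V_8^\vee$ in a basis adapted to the flag $(U_1\subset U_4\subset U_7)$ to see that $[U_7^\perp]$ lies in $\cC(v^\vee)$, and then conclude equality of the two irreducible hypersurfaces from the symmetry of the construction. The only cosmetic difference is that you invoke biduality for the final step, whereas the paper notes directly that the symmetry between $U_1$ and $U_7^\perp$ makes $\cC(v)^\vee$ birational to $\cC(v)$, hence a hypersurface, forcing the inclusion to be an equality.
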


\proof For $[U_1]$ a general point of $\cC$, we have a flag $(U_1\subset U_4\subset U_7)$
such that $v$ belongs to $(\wedge^2 U_4) \wedge (\wedge^2 V_8) +\wedge^3 U_7\wedge  U_1$.
Choose an adapted basis $e_1,\ldots , e_8$, so that $e_1$ generates $U_1$, etc. 
The condition means that $v$ is a linear combination of elementary tensors $e_i\wedge e_j\wedge e_k\wedge e_\ell$ with $i,j\le 4$, and of $e_5\wedge e_6\wedge e_7\wedge e_1$. 

Now recall that if the chosen volume form on $V_8$ is $e_1\wedge \cdots\wedge e_8$, and 
$e_1^\vee,\ldots , e_8^\vee$ is the dual basis of $e_1,\ldots , e_8$, then the isomorphism of
 $\wedge^4V_8$ with $\wedge^4V_8^\vee$ sends the elementary tensor $e_i\wedge e_j\wedge e_k\wedge e_\ell$ to  $\pm e_p^\vee\wedge e_q^\vee\wedge e_r^\vee\wedge e_s^\vee$, where $\{i,j,k,l\}
 \cap \{p,q,r,s\}=\emptyset$.
 
 As a consequence, $v^\vee$ will be a linear combination of elementary tensors $e_p^\vee\wedge e_q^\vee\wedge e_r^\vee\wedge e_s^\vee$ with $p,q\ge 5$, and $e_2^\vee\wedge e_3^\vee\wedge e_4^\vee\wedge e_8^\vee$. In other words,  
 $$v^\vee \in (\wedge^2 U_4^\perp) \wedge (\wedge^2 V_8^\vee) +\wedge^3 U_1^\perp\wedge  U_7^\perp.$$
 This is exactly the condition that ensures that $[U_7^\perp]$ belongs to $\cC(v^\vee)$.
 Thanks  to the previous Lemma we deduce that $\cC(v)^\vee\subset \cC(v^\vee)$. Moreover, 
 the symmetry between $U_1$ and $U_7^\perp$ implies that in general, $U_1$ 
 can be recovered 
 from $U_7$ exactly as $U_7$ is constructed from $U_1$, which means that $\cC(v)^\vee$
 is birationally equivalent to $\cC(v)$. Finally, 
 since $\cC(v)^\vee$ and   $\cC(v^\vee)$  are both irreducible hypersurfaces, they must be equal.\qed

\medskip The previous discussion shows that it is natural to define the variety 
$\cC(v,v^\vee)\subset Fl(1,4,7,V_8)$ parametrizing the flags $(U_1\subset U_4\subset U_7\subset V_8) $ satisfying condition (\ref{key}). This is a smooth variety 
dominating birationally both $\cC(v)$ and $\cC(v^\vee)$; there is a diagram
$$\xymatrix@-2ex{
&& Fl(1,4,7,V_8) &&\\
Fl(1,4,V_8) & & \cC(v,v^\vee)\ar[dr]\ar[dl]\ar@{^{(}->}[u] && Fl(4,7,V_8)\\
 & \tilde{\cC}(v)\ar[dl]\ar@{_{(}->}[ul] & & \tilde{\cC}(v^\vee)\ar[dr]\ar@{^{(}->}[ur] &\\
 \PP(V_8)\supset \cC(v)  \ar@{-->}[rrrr]^{d\cC}& & &&\cC(v^\vee)\subset\PP(V_8^\vee) 
}$$
One recovers that way the constructions explained is \cite[section 3.3]{pauly}.
We used the suggestive notation $d\cC$ for the Gauss map, which sends a smooth point 
of $\cC(v)$ to its tangent hyperplane, given by the differential of the cubic's equation.

\subsection{The Cartan subspace} 
Recall that a Cartan subspace for the $\ZZ_2$-graded Lie algebra $\fe_7=\fsl(V_8)\oplus  \wedge^4V_8$ is a maximal subspace of $\wedge^4V_8$, made of elements of $\fe_6$ which
are semisimple and commute \cite{vinberg}. Among other nice properties, a general element of  $\wedge^4V_8$ is $\SL(V_8)$-conjugate to (finitely many) elements of any given 
Cartan subspace.

An explicit Cartan subspace of $\wedge^4V_8$ is worked out in \cite[(3.1)]{oeding}. It coincides with the space of Heisenberg invariants provided in \cite[Remark 4.2]{rsss}. Here is a list of seven generators, for a given basis $e_1,\ldots , e_8$ of $V_8$:
$$\begin{array}{rcl}
h_1 & = & e_1\wedge e_2\wedge e_3\wedge e_4 +e_5\wedge e_6\wedge e_7\wedge e_8 ,\\
h_2 & = & e_1\wedge e_3\wedge e_5\wedge e_7 +e_6\wedge e_8\wedge e_2\wedge e_4 ,\\
h_3 & = & e_1\wedge e_5\wedge e_6\wedge e_2 +e_8\wedge e_4\wedge e_3\wedge e_7 ,\\
h_4 & = & e_1\wedge e_6\wedge e_8\wedge e_3 +e_4\wedge e_5\wedge e_7\wedge e_2 ,\\
h_5 & = & e_1\wedge e_8\wedge e_4\wedge e_5 +e_7\wedge e_2\wedge e_6\wedge e_3 ,\\
h_6 & = & e_1\wedge e_4\wedge e_7\wedge e_6 +e_2\wedge e_3\wedge e_8\wedge e_5 ,\\
h_7 & = & e_1\wedge e_7\wedge e_2\wedge e_8 +e_3\wedge e_5\wedge e_4\wedge e_6.
\end{array}$$
Combinatorially, each of these generators is given by a pair of complementary fourtuples of indices in $\{1,\ldots,8\}$. Each of these $14$ fourtuples shares a pair of indices with any other distinct, not complementary fourtuple. This is the property that ensures the commutation in $\fe_6$, since the Lie bracket of $\fe_6$, restricted to $\wedge^4V_8$, is given by the unique (up to scalar) $\fsl_8$-equivariant morphism 
$$\wedge^2(\wedge^4V_8) \lra \wedge^4V_8\otimes \wedge^4V_8\lra S_{21111110}V_8\simeq\fsl_8.$$
If we start with two elementary tensors given by fourtuples with a common pair of indices, we can include them into $\wedge^4U_6$ for some codimension two susbpace $U_6\subset V_8$. But then the Lie bracket factors through $S_{21111110}U_6=\{0\}$, so it has to 
vanish. 

\smallskip
Each pair of indices in $\{1,\ldots,8\}$ 
belongs to three of the $14$ fourtuples. For any triple 
$(ijk)$ among $$(124), \; (137),\;  (156),\;  (235),\; (267),\; (346), \; (457),$$
$h_i$, $h_j$ and $h_k$ share four disjoint pairs (for example  $h_1$, $h_2$ and 
$h_4$ share $(13), (24), (57), (68)$). These seven triples always meet in exactly 
one index, so they are in correspondence with the lines in a Fano plane. More on this in \cite[Section 4]{config}. 

\smallskip
A nice consequence of this description is the following 

\begin{prop}\label{iso}
$\cC(v)$ and $\cC(v^\vee)$ are isomorphic.
\end{prop}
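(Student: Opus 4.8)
The plan is to produce an explicit linear automorphism of $V_8$ that transforms $v^\vee$ into $v$ (up to scalar), when $v$ is taken to be a general element of the Cartan subspace $\fh$ spanned by $h_1,\dots,h_7$. Since every general four-form is $\SL(V_8)$-conjugate to an element of $\fh$, and the statement we want is invariant under replacing $v$ by an $\SL(V_8)$-translate (such a translate changes $v^\vee$ by the contragredient action, so $\cC(v^\vee)$ moves by the induced projective isomorphism), it suffices to treat $v\in\fh$. The key observation is the combinatorial symmetry of the fourtuples: the $14$ fourtuples defining $h_1,\dots,h_7$ come in $7$ complementary pairs, so under the duality isomorphism $\wedge^4 V_8\simeq\wedge^4 V_8^\vee$ induced by the volume form $e_1\wedge\cdots\wedge e_8$, each elementary tensor $e_a\wedge e_b\wedge e_c\wedge e_d$ appearing in some $h_i$ is sent, up to sign, to $e_p^\vee\wedge e_q^\vee\wedge e_r^\vee\wedge e_s^\vee$ where $\{p,q,r,s\}$ is the complementary fourtuple --- which is the \emph{other} fourtuple of the \emph{same} $h_i$. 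Hence, up to signs, the duality isomorphism sends the Cartan subspace $\fh$ (viewed in $\wedge^4 V_8$) isomorphically onto the analogous Cartan subspace $\fh^\vee\subset\wedge^4 V_8^\vee$ spanned by the dual generators $h_i^\vee$, permuting the generators by the identity: $h_i\mapsto \pm h_i^\vee$.

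First I would pin down those signs. Writing $v=\sum_{i=1}^7 \lambda_i h_i$, the image $v^\vee$ equals $\sum_i \lambda_i\,\varepsilon_i\, h_i^\vee$ for signs $\varepsilon_i\in\{\pm1\}$ that I can read off the orderings chosen in the list of $h_i$ (this is a finite sign computation, using that $e_a\wedge e_b\wedge e_c\wedge e_d \mapsto \mathrm{sgn}(\sigma)\, e_p^\vee\wedge e_q^\vee\wedge e_r^\vee\wedge e_s^\vee$ where $\sigma$ reorders $(a,b,c,d,p,q,r,s)$ to $(1,\dots,8)$). Then I would exhibit a diagonal-up-to-permutation matrix $g\in\GL(V_8)$ --- concretely, a signed permutation of the basis vectors $e_1,\dots,e_8$ --- whose induced action on $\wedge^4 V_8$ fixes each line $\CC h_i$ and multiplies $h_i^\vee$ (transported back to $\wedge^4 V_8$ via any fixed identification $V_8\simeq V_8^\vee$) by exactly $\varepsilon_i^{-1}$, up to a global scalar. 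The point is that the stabilizer in $\GL(V_8)$ of the whole configuration $\{h_1,\dots,h_7\}$ is large --- it contains the Heisenberg group (these $h_i$ are precisely the Heisenberg invariants, cf. \cite[Remark 4.2]{rsss}) together with extra normalizing symmetries --- so there is ample room to realize an arbitrary prescribed sign pattern $(\varepsilon_i)$ on the seven generators. For a general $v$ this $g$ then satisfies $g\cdot v = c\, v^\vee$ for a nonzero constant $c$, where $v^\vee$ is now read inside $\wedge^4 V_8$ via the identification. Transporting through the volume form, $g$ (or rather its contragredient) induces a projective isomorphism $\PP(V_8)\to\PP(V_8^\vee)$ carrying $\cC(v)=D_{Y_1}(v)$ to $\cC(v^\vee)=D_{Y_1}(v^\vee)$, because the orbital degeneracy locus construction $D_{Y_1}(-)$ is $\GL(V_8)$-equivariant by its very definition.

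The main obstacle I expect is the bookkeeping of signs: verifying that the Heisenberg/normalizer symmetries of the configuration really do act transitively enough on the seven generators to kill the sign pattern $(\varepsilon_i)$, and not merely to permute the $h_i$ among themselves. Concretely one needs, for each $i$, an element of $\GL(V_8)$ stabilizing every line $\CC h_j$ and acting on $\CC h_i$ with eigenvalue $-1$ while acting trivially on the other six; equivalently one needs seven independent "sign characters" on the configuration. The Heisenberg group $H_{2,2,2}$ acts on $\fh$ through its character group $(\ZZ/2)^6$, which already provides a $(\ZZ/2)^6$-worth of sign changes (the weight decomposition of $\fh$ under the abelianization), and the extra normalizer --- coming from the $\mathrm{SL}_2(\FF_2)$-type symmetries permuting the Fano-plane structure recorded after the list of generators --- supplies what is missing to generate the full $(\ZZ/2)^7$ of sign patterns, or at least enough of it that after also rescaling the $\lambda_i$ (allowed, since for general $v$ we may first move $v$ by a diagonal torus element) every sign can be absorbed. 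Once this is settled the rest is formal. An alternative, if the sign analysis proves delicate, is to argue more softly: both $\cC(v)$ and $\cC(v^\vee)$ are Coble quartics, hence each is isomorphic to $\SU_C(2)$ for its associated curve by \cite{nr}; and the curve is recovered from $v$ via the Cartan subspace construction \cite[Remark 6.1]{GSW}, which manifestly depends on $v$ only through its $\SL(V_8)$-orbit and is insensitive to the duality $v\leftrightarrow v^\vee$ precisely because $\fh$ and $\fh^\vee$ are identified as $W(E_7)$-representations --- so the two curves coincide and therefore $\cC(v)\simeq\SU_C(2)\simeq\cC(v^\vee)$.
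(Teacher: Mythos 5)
Your reduction to the Cartan subspace and your key observation --- that the two fourtuples occurring in each $h_i$ are complementary, so the duality $e_I\mapsto \epsilon_{I,J}e_J^\vee$ sends each line $\CC h_i$ to the corresponding line $\CC h_i^\vee$ --- are exactly the paper's strategy. Where you diverge is in handling the signs $\varepsilon_i$, and this is where your argument has a genuine gap. The paper simply checks that $\epsilon_{K,L}=+1$ for each of the seven complementary pairs (the orderings in the displayed list of generators are chosen so that each of the seven permutations $(k_1,\dots,k_4,l_1,\dots,l_4)$ of $(1,\dots,8)$ is even --- e.g.\ $(1,3,5,7,6,8,2,4)$ has $10$ inversions), so $h_i^\vee$ has literally the same coordinates as $h_i$ and nothing needs to be absorbed. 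Your proposed sign-absorption mechanism, by contrast, does not work as stated: the $h_i$ span the space of \emph{Heisenberg invariants}, i.e.\ the trivial isotypic component, so the Heisenberg group acts trivially on $\fh$ and provides no sign changes at all --- there is no ``$(\ZZ/2)^6$-worth of weight characters'' on $\fh$. The residual normalizer action (essentially $W(E_7)$, with $W(E_7)/\{\pm1\}\simeq \mathrm{Sp}_6(\FF_2)$) does not obviously preserve the individual lines $\CC h_i$, so the existence of ``seven independent sign characters'' fixing each line is an unproved and doubtful assertion. Your fallback via the diagonal torus is closer to workable (one would need $t\in(\CC^*)^8$ with $t_{K_i}=t_{L_i}=\sigma_i$, which forces $\sigma_i^2=\det t$ and reduces to an $\FF_2$-linear surjectivity question about the seven indicator vectors of the $K_i$), but you neither set up nor verify that computation.

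Your ``softer'' alternative has the same hidden dependence: to conclude that $v$ and $v^\vee$ determine the same curve via the quotient $\fh/W$, you must know that $\sum\lambda_i h_i$ and $\sum\lambda_i\varepsilon_i h_i$ lie in the same $W$-orbit, which is again precisely the sign question (and $W$ need not contain all coordinate sign changes in the basis $h_1,\dots,h_7$). The fix is not to build machinery but to do the finite check: verify the seven permutation signs directly, conclude $v^\vee=\sum\lambda_i h_i^\vee$ on the nose, and let the tautological isomorphism $e_i\mapsto e_i^\vee$ carry $\cC(v)$ to $\cC(v^\vee)$ by $\GL(V_8)$-equivariance of the orbital degeneracy locus construction, as you correctly note at the end.
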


\proof Since our $v$ is general, we may suppose up to the action of $\SL(V_8)$ that 
$v$ belongs to our Cartan subspace above, given in terms of the basis $e_1, \ldots, e_8$
of $V_8$. 
Denote the dual basis by $e_1^\vee, \ldots , e_8^\vee$, and choose the volume form $e_1^\vee\wedge \cdots \wedge e_8^\vee$ on $V_8$. Then the induced isomorphism from 
$\wedge^4V_8$ to $\wedge^4V^\vee_8$ sends $e_I=e_{i_1}\wedge e_{i_2}\wedge e_{i_3}
\wedge e_{i_4}$ to $\epsilon_{I,J}e_J$, where $J$ is the complement of $I$ in $\{1,\ldots , 8\}$ and $\epsilon_{I,J}$ is the sign of the permutation $(i_1,\ldots , i_4,j_1,\ldots, j_4)$. 

Now, observe that for each $i$, $h_i$ is of the form $e_K+e_L$ for two complementary 
sets of indices $K$ and $L$. Moreover, one can check that $\epsilon_{K,L}$ is always equal to $1$. This implies that $h_i^\vee=e^\vee_K+e^\vee_L$ has exactly the same expression as $h_i$, in terms of the dual basis. In other words the map $v\mapsto v^\vee$,
when restricted to our Cartan subspace, is essentially the identity,  and the claim
follows. \qed

\subsection{The abelian threefold} 

Remarkably, one can construct the abelian threefold whose Kummer variety is $\cK$
by considering another orbital degeneracy locus. The idea is to use the flag variety
$Fl(1,7,V_8)$, the incidence correspondence in $\PP(V_8)\times \PP(V_8^\vee)$ parametrizing flags $(U_1\subset U_7)$. 
The rank six quotient bundle $\cN=\cU_7/\cU_1$ allows to realize the space 
of four-forms as 
$$\wedge^4V_8^\vee=H^0(Fl(1,7,V_8), p_1^*\cO(1)\otimes\wedge^3\cN^\vee).$$
Exactly as before, this allows to associate to any $\GL(V_6)$-orbit closure $Y$ in $\wedge^3V_6$ an 
orbital degeneracy locus $D_Y(v)\subset Fl(1,7,V_8)$. Here $V_6$ is a six-dimensional vector space. In particular, the cone $Y_{10}$ over 
the Grassmannian $G(3,V_6)$ yields, for $v$ generic, a smooth threefold $\cA:=D_{Y_{10}}(v)$.  
In similar terms as for the other orbital degeneracy loci,  
this threefold is 
\begin{equation}\label{defA}
\cA=\Big\{ [U_1\subset U_7]\in Fl(1,7,V_8) \mid \exists U_1\subset U_4\subset U_7,\;
v\in \wedge^3 U_4\wedge V_8 +\wedge^4 U_7 +\wedge^3 V_8\wedge  U_1\Big\} .
\end{equation}

\begin{prop}
$\cA$ is a torsor over an abelian threefold, and the projection to $\PP(V_8)$ is a double cover of $\cK$. 
\end{prop}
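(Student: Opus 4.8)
The plan is to prove the two assertions in turn: that the projection $p_1\colon\cA\to\PP(V_8)$ forgetting $U_7$ is a degree-two cover of $\cK$, and that $\cA$ is a torsor over an abelian threefold, the latter building on the former. We already know $\cA=D_{Y_{10}}(v)$ is a smooth threefold, so I would first pin down the intermediate space $U_4$ in \eqref{defA}. Unwinding the Borel--Weil identification $\wedge^4V_8^\vee=H^0(Fl(1,7,V_8),p_1^*\cO(1)\otimes\wedge^3\cN^\vee)$, the image of $v$ at a point $[U_1\subset U_7]$ is, up to the line $U_1^\vee$, a three-form $\bar v\in\wedge^3(U_7/U_1)^\vee$, and condition \eqref{defA} says precisely that $\bar v$ lies in the cone $Y_{10}$ over $G(3,U_7/U_1)$, i.e. that $\bar v$ is a decomposable three-form on the six-dimensional space $U_7/U_1$; a general nonzero such $\bar v$ has a well-defined three-dimensional kernel, which is exactly $U_4/U_1$ (this is the relativised Kempf collapsing of $Y_{10}$, the total space of the line bundle $\wedge^3\cU$ over $G(3,V_6)$, $\cU$ the rank-three tautological bundle). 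So $U_4$ is determined by $(U_1,U_7)$ at a general point, $\cA\subset Fl(1,7,V_8)$, and besides $p_1$ we also have $p_2\colon\cA\to\PP(V_8^\vee)$, $[U_1\subset U_7]\mapsto[U_7^\perp]$.

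By the previous paragraph, the fibre of $p_1$ over $[U_1]$ is the set of hyperplanes $U_7\supset U_1$ such that the three-form $v\bmod U_1\in\wedge^3(V_8/U_1)^\vee$ restricts to a decomposable form on the hyperplane $U_7/U_1$, whereas $[U_1]\in\cK=D_{Y_4}(v)$ exactly when $v\bmod U_1$ lies in $Y_4$. Thus the assertion ``$p_1(\cA)=\cK$ and $\deg p_1=2$'' reduces to the following $\GL(V_7)$-equivariant statement about the finitely many orbits in $\wedge^3V_7^\vee$: a general element of $Y_4$ restricts to a decomposable form on exactly two hyperplanes of $V_7$, while a general element of $Y_1\setminus Y_4$ or of the open orbit restricts to a decomposable form on no hyperplane. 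I would check this directly on the normal forms of the $\GL(V_7)$-orbits underlying \cite{GSW, KWE7}. Together with $\dim\cA=\dim\cK=3$, this shows $p_1\colon\cA\to\cK$ is finite of degree two, which is the first claim; by Theorem \ref{selfdual} and $\cC(v)\simeq\cC(v^\vee)$ the same holds for $p_2$ with target $\cK(v^\vee)\simeq\cK$.

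To recognise the abelian structure, recall that $\cK$ is the Kummer threefold $\Jac(C)/\{\pm1\}$ --- equivalently $\Pic^2(C)$ modulo $L\mapsto K_C\otimes L^{-1}$ --- embedded by $|2\Theta|$, with quotient map étale over $\cK_{\mathrm{sm}}:=\cK\setminus\cK[2]$ and totally ramified over the $64$ points of $\cK[2]=D_{Y_7}(v)$, each an ordinary quotient singularity $\CC^3/\{\pm1\}$. Since $\cA$ is smooth and $p_1$ is finite of degree two, and since the germ $(\CC^3/\{\pm1\},0)$ has a unique connected double cover, namely the smooth one $\CC^3\to\CC^3/\{\pm1\}$, a local analysis at the preimages of $\cK[2]$ forces $p_1$ to be étale over $\cK_{\mathrm{sm}}$ with the expected local structure over $\cK[2]$. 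To identify the resulting connected double cover of $\cK_{\mathrm{sm}}$ with the distinguished one $\Pic^2(C)\setminus\Pic^2(C)[2]\to\cK_{\mathrm{sm}}$, I would note that for general $v$ a dimension count shows $\bar v\ne 0$ at every point of $\cA$, so $\cA$ is isomorphic to the zero locus $Z(v)\subset Fl(1,4,7,V_8)$ of a general section of $\wedge^4V_8\otimes\cO_{Fl(1,4,7,V_8)}/\cF$ with $\cF=\wedge^3\cU_4\wedge V_8+\wedge^4\cU_7+\wedge^3V_8\wedge\cU_1$, and then compute $K_\cA$ by adjunction, expecting $K_\cA=\cO_\cA$. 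As only the distinguished double cover of $\cK_{\mathrm{sm}}$ pulls back its canonical ($2$-torsion) line bundle to the trivial one --- equivalently, has trivial canonical sheaf --- this identification then follows, so $\cA$ contains $\Pic^2(C)\setminus\Pic^2(C)[2]$ as a dense open. Hence $\cA$ is birational to $\Pic^2(C)$, and being a smooth proper threefold with $K_\cA=\cO_\cA$ it is isomorphic to $\Pic^2(C)$, a torsor over the abelian threefold $\Jac(C)$; $p_1$ is then the induced double cover of $\cK$, as claimed.

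The main obstacles are the two explicit inputs: the orbit computation of the second paragraph (that a general point of $Y_4$ has exactly two ``decomposable'' hyperplanes, read off from the normal forms of the $\GL_7$-orbits on $\wedge^3V_7$), and the canonical-bundle computation of the third --- the Kempf collapsing involves non-semisimple homogeneous bundles, so tracking $\det$ through the adjunction formula on $Fl(1,4,7,V_8)$ is where the genuine bookkeeping lies. The remaining steps are formal.
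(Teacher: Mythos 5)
The paper's own ``proof'' of this proposition is a single citation to \cite[Proposition 6.12]{GSW}, so your self-contained argument is necessarily a different route, and its architecture is sound. The reduction of ``$p_1(\cA)=\cK$ with generic degree two'' to a pointwise, $\GL(V_7)$-equivariant count is legitimate: $\wedge^4V_8/(\wedge^4U_7+\wedge^3V_8\wedge U_1)\simeq \wedge^3(U_7/U_1)\otimes(V_8/U_7)$, the ODL condition \eqref{defA} asks exactly that the class of $v$ there be decomposable (this is what the paper records right after the proposition), and since $Y_4\setminus Y_7$ is a single orbit the number of ``decomposable hyperplanes'' is constant on it, so it suffices to count them for one normal form. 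Likewise the endgame --- classify connected \'etale double covers of $\cK\setminus\cK[2]$ by whether they trivialize the $2$-torsion canonical class, then conclude by normality of $\cK$ that $\cA$ is the corresponding normalization --- is a clean way to extract the torsor structure. What this buys over the citation is a concrete mechanism: the two sheets of $\cA\to\cK$ are the two decomposable hyperplanes of a general point of $Y_4$.

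That said, the two computations you defer are precisely where the substance lies, so as it stands this is an outline rather than a proof: (a) the count ``exactly two hyperplanes for $w\in Y_4\setminus Y_7$, none on the larger strata'', which needs the explicit $\GL_7$-normal forms; and (b) the adjunction computation of $K_\cA$ through the rank-$19$ bundle on $Fl(1,4,7,V_8)$, where (as in the proof that $D_{Z_6}(v)$ has even index) you should expect to land on a combination of $\det\cU_1,\det\cU_4,\det\cU_7$ that is only torsion a priori, with a separate argument needed for its triviality. There is also one genuine logical slip: \'etaleness of $p_1$ over $\cK\setminus\cK[2]$ does \emph{not} follow from the local analysis at the $64$ singular points, since a branch divisor need not pass through them; you need the ramification formula $K_\cA=p_1^*K_{\cK\setminus\cK[2]}+R$ together with $K_\cA=\cO_\cA$ and the torsionness of $K_{\cK\setminus\cK[2]}$ to force $R=0$ (after which the local analysis at $\cK[2]$ does its job). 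Finally, connectedness of $\cA$ and finiteness of $p_1|_\cA$ are used tacitly; both are needed and should be argued (e.g.\ a component of $\cA$ finite and birational onto the normal but singular $\cK$ would be isomorphic to it, contradicting smoothness).
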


\proof This is \cite[Proposition 6.12]{GSW}. \qed

\medskip Over a point of $\cA$, given by a flag $U_1\subset U_7$, the four-form 
$v$ defines a decomposable tensor in $\wedge^3(U_7/U_1)$. This tensor is never
zero if $v$ is general, and therefore defines a four-dimensional space $U_4$ 
such that $U_1\subset U_4\subset U_7$. Hence a rank-four vector bundle $\cU_4$ 
on $\cA$, a subbundle of the trivial bundle $V_8\otimes \cO_\cA$.

\begin{remark}\label{8fold} The proper orbit closures of the $\GL(V_6)$-action on $\wedge^3V_6$ are,
apart from the cone over the Grassmannian, a quartic hypersurface and the codimension five locus
of partially decomposable tensors. In our relative setting, the quartic induces a hypersurface of
bidegree $(2,2)$, whose singular locus is an eightfold that is singular exactly along $\cA$. 
So once again we get a very interesting singular hypersurface. It would be very nice to find a modular interpretation of these loci. \end{remark}

\section{Lines from alternating forms}

In this section we will identify the two covering families of lines in $\SU_C(2)$ in terms of orbital degeneracy loci; this will give a very explicit description of these families in terms of existence of special flags of vector spaces. As a consequence of this we will obtain Theorem \ref{thm_main}, in which we identify the moduli space $\SU_C(2,L)$, for $L$ of odd degree, with an orbital degeneracy locus in $G(2,V_8)$ associated to $v\in \wedge^4 V_8$.

\subsection{The ruling and its lines}
Recall the definition of the abelian threefold $\cA$ from Equation \eqref{defA}. 
Our next result relates it to the ruling described in section \ref{sec_ruling_lines}.

\begin{prop}
The family $\PP(\cU_4)$ over $\cA$ coincides with the ruling $\PP(\cR)$ over 
$\Pic^1(C)$ of the moduli space $\SU_C(2)$.
\end{prop}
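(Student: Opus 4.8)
The strategy is to exhibit a canonical isomorphism between the two base varieties $\cA$ and $\Pic^1(C)$, and then to check that under this isomorphism the tautological $\PP^3$-bundles agree. First I would analyze the fibre structure. Over a point of $\cA$, i.e.\ a flag $(U_1\subset U_7)$ satisfying \eqref{defA}, we have constructed in the paragraph after the proposition a canonical subspace $U_4$ with $U_1\subset U_4\subset U_7$, hence the rank-four bundle $\cU_4$ on $\cA$. The fibre $\PP(\cU_4)$ at that point is a $\PP^3$, and I claim it is one of the $\PP^3$'s of the ruling. For this, recall from Section~3 that $[U_1]$ lies on $\cK$ precisely because $v\in \wedge^3 U_4\wedge V_8 + \wedge^4 U_7 + \wedge^3 V_8\wedge U_1$, and that the condition $\exists U_4\supset U_1$ with $v\in(\wedge^2 U_4)\wedge(\wedge^2 V_8)+\wedge^3 V_8\wedge U_1$ is exactly the defining condition of $\cC$. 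So every $[U_1']$ with $U_1'\subset U_4$ satisfies $v\in(\wedge^2 U_4)\wedge(\wedge^2 V_8)+\wedge^3 V_8\wedge U_1'$ — i.e.\ the whole $\PP(U_4)$ lies on $\cC$. This identifies $\PP(\cU_4)$ with a family of $3$-planes inside $\cC$.

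Next I would match these $3$-planes with the $\PP_L$'s of the ruling. By Proposition~\ref{vmrt-ruling} (citing \cite{pauly}) the ruling map $\PP(\cR)\to \SU_C(2)$ has degree $8$, and by \cite[Proposition 6.12]{GSW} the projection $\cA\to\cK$ is a double cover; combined with the $2{:}1$ map $\SU_C(2)\dashrightarrow\cK$ near a generic point (generic point of $\cC$ has two preimages in $\cA$ vs.\ eight $\PP^3$'s of the ruling through a generic point of $\SU_C(2)$), a dimension count shows both families are six-dimensional and that a generic $3$-plane of one family is a generic $3$-plane of the other, provided we can identify the base $\cA$ with $\Pic^1(C)$. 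The cleanest route to the latter: a $\PP^3$ in the ruling is $\PP_L=\PP(\Ext^1(K_C\otimes L^\vee, L))$ for a unique $L\in\Pic^1(C)$, because by \cite[Proposition 1.2]{opp} two distinct line bundles give distinct $3$-planes; conversely each $\PP(\cU_4)$ determines its image $3$-plane in $\cC$, hence a unique $L$. This gives a morphism $\cA\to\Pic^1(C)$, which I would show is an isomorphism by noting it is a bijection on points (each side being a single torsor structure, and \cite[Proposition 6.12]{GSW} already identifies $\cA$ with a principal homogeneous space of an abelian threefold whose Kummer is $\cK$, which forces it to be $\Pic^1(C)$ up to translation) and that both sides are smooth.

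The main obstacle, I expect, is the bookkeeping needed to prove that the $3$-plane carved out by $\PP(\cU_4)$ is \emph{precisely} a ruling $\PP_L$ and not merely contained in $\cC$ — one must produce the extension structure $0\to L\to E\to K_C\otimes L^\vee\to 0$ intrinsically from the flag data $(U_1\subset U_4\subset U_7)$. The natural way is to use the identification $\cC\simeq\SU_C(2)$ via the theta map and the description of points of $\cK\subset\cC$ as (polystable) bundles $L\oplus L^\vee$ (with $K_C$-twist conventions), then show the canonical $U_4$ over a point of $\cA$ recovers exactly the extension space $\Ext^1(K_C\otimes L^\vee, L)$, e.g.\ by comparing with the Kempf-collapsing description of $\cA$ in \cite[Section 6]{GSW} and matching the tautological bundles. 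Once the generic $3$-planes coincide and the base varieties are identified, the equality $\PP(\cU_4)=\PP(\cR)$ of the two (closures of the) families inside $G(4,V_8)$ follows, since two irreducible families of the same dimension sharing a generic member must coincide.
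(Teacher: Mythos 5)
There is a genuine gap at the very first step. You assert that, because the flag $(U_1\subset U_4\subset U_7)$ satisfies the defining condition of $\cA$, namely $v\in \wedge^3 U_4\wedge V_8 + \wedge^4 U_7 + \wedge^3 V_8\wedge U_1$, it follows that \emph{every} line $U_1'\subset U_4$ satisfies $v\in(\wedge^2 U_4)\wedge(\wedge^2 V_8)+\wedge^3 V_8\wedge U_1'$. This implication is false: the summand $\wedge^3 V_8\wedge U_1$ is not contained in $(\wedge^2 U_4)\wedge(\wedge^2 V_8)$ (it only carries one factor from $U_4$), nor in $\wedge^3 V_8\wedge U_1'$ when $U_1'\neq U_1$, and the summand $\wedge^4 U_7$ is not contained in either term. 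In fact the witness $4$-plane for $U_1'\in\cC$ is in general \emph{not} the same $U_4$: the paper's proof of this containment takes a full lemma, writing $v=e_1\wedge w+v'+e_2\wedge e_3\wedge e_4\wedge e_8$, decomposing the three-form $w$ modulo $\langle e_1,e_2\rangle$ as a sum of two decomposable tensors (using that the secant variety of $G(3,6)$ fills its Pl\"ucker space), intersecting with the hyperplane cut out by $v'$, and producing a new $4$-plane $U_4'=\langle e_1,e_2,a,d\rangle$ adapted to $U_1'=\CC e_2$. Your proposal skips exactly this work, and the step as written would fail.

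The second half also diverges from what can actually be carried out cleanly. You propose to build an isomorphism $\cA\simeq\Pic^1(C)$ and to recover the extension space $\Ext^1(K_C\otimes L^\vee,L)$ intrinsically from the flag data, which you yourself flag as "the main obstacle"; no mechanism is given for producing the line bundle $L$ from $(U_1\subset U_4\subset U_7)$, and the degree comparison ($8$ ruling $\PP^3$'s through a general point versus the double cover $\cA\to\cK$) does not by itself show that a generic member of $\PP(\cU_4)$ equals some $\PP_L$. The paper sidesteps all of this: it shows that the family $\PP(\cU_4)$ \emph{covers} $\cC$ by a Chern class computation (the relevant degree is $32=8\cdot 4$, computed as $\int_{Fl(1,4,7,V_8)}c_{19}(\cG)s_3(\cU_4^\vee)$), and then concludes by the uniqueness of the ruling, which follows from the results of Oxbury--Pauly recalled in Section 2.3. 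If you want to salvage your approach, the covering argument plus uniqueness is the missing key idea; the identification of $\cA$ with $\Pic^1(C)$ is then a consequence rather than an input.
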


\proof We need to prove that for any flag $(U_1\subset U_7)$ in $\cA$, defining the four-plane $U_4$, the linear space $\PP(U_4)$ is contained in $\cC$. If we can 
show that $\cC$ is even covered by this family of $\PP^3$'s, we will be done since 
the ruling is unique. 
So let us prove these two statements. 

\begin{lemma}
The image of $\PP(\cU_4)$ in $\PP(V_8)$ is contained in 
the Coble quartic.
\end{lemma}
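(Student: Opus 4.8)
The claim is that for any flag $(U_1\subset U_7)\in\cA$, with $U_4$ the four-plane determined by the decomposable tensor in $\wedge^3(U_7/U_1)$, every line $U_1'\subset U_4$ gives a point $[U_1']\in\cC=D_{Y_1}(v)$. The plan is to use the defining condition \eqref{defA} for $\cA$, namely that $v\in\wedge^3 U_4\wedge V_8+\wedge^4 U_7+\wedge^3 V_8\wedge U_1$, together with the characterization of the Coble quartic from the Kempf-collapsing Proposition: $[U_1']\in\cC$ if and only if there exists $U_4'\supset U_1'$ with $v\in(\wedge^2 U_4')\wedge(\wedge^2 V_8)+\wedge^3 V_8\wedge U_1'$. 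So for a given $[U_1']\in\PP(U_4)$ I must produce a suitable $U_4'$.

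First I would observe that $\wedge^4 U_7\subset\wedge^3 U_7\wedge V_8$ and, more usefully, try the natural candidate $U_4':=U_4$ itself (independent of $U_1'$). Then I must check $v\in(\wedge^2 U_4)\wedge(\wedge^2 V_8)+\wedge^3 V_8\wedge U_1'$. The three summands in the $\cA$-condition should be handled separately: $\wedge^3 U_4\wedge V_8\subset(\wedge^2 U_4)\wedge(\wedge^2 V_8)$ trivially since $\wedge^3 U_4\subset(\wedge^2 U_4)\wedge U_4\subset(\wedge^2 U_4)\wedge V_8$; and $\wedge^4 U_7$ — here I would use that $U_4\subset U_7$, so any vector in $U_7$ not in $U_4$ pairs with $\wedge^3 U_4$ inside $U_7$; more precisely $\wedge^4 U_7=\wedge^4 U_4\oplus(\wedge^3 U_4)\wedge(U_7/U_4\text{-part})\oplus(\wedge^2 U_4)\wedge\wedge^2(\text{complement})$, and only the last of these fails to obviously lie in $(\wedge^2 U_4)\wedge(\wedge^2 V_8)$ — wait, it does lie there. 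So $\wedge^4 U_7\subset(\wedge^2 U_4)\wedge(\wedge^2 V_8)$ as soon as $U_4\subset U_7$ has dimension $\ge 2$, which holds. That leaves $\wedge^3 V_8\wedge U_1$: here I would use $U_1\subset U_4$, hence $U_1\subset U_4'=U_4$, but I need it modulo $U_1'$, not modulo $U_1$. The subtlety is that $U_1$ and $U_1'$ are different lines in $U_4$.

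The resolution: if $U_1'\ne U_1$, then $\wedge^3 V_8\wedge U_1\subset\wedge^3 V_8\wedge U_4\subset(\wedge^2 U_4)\wedge(\wedge^2 V_8)$ because $U_1\wedge\wedge^3 V_8=U_1\wedge(\wedge^2 V_8)\wedge V_8$, and $U_1\subset U_4$ so $U_1\wedge\wedge^2 V_8\subset U_4\wedge\wedge^2 V_8$; to get two factors in $U_4$ I use that $\wedge^3 V_8\wedge U_1\subset\wedge^4 V_8$ and rewrite $\wedge^3 V_8\wedge U_1 = V_8\wedge V_8\wedge V_8\wedge U_1$, pulling out $U_1$ together with one more $U_4$-direction: since $\dim U_4\ge 2$ and $U_1\subset U_4$, for any trivector $\omega$ we can write $\omega\wedge u_1$ (with $u_1\in U_1$) as a sum of terms each having a second $U_4$-factor only after choosing bases. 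Actually the clean statement is $U_1\wedge\wedge^3 V_8\subset U_4\wedge\wedge^3 V_8$ and separately one needs $U_4\wedge\wedge^3 V_8\not\subset(\wedge^2 U_4)\wedge\wedge^2 V_8$ in general — so $U_4':=U_4$ probably does not work directly for the $U_1$-term, and instead I expect one enlarges: choose $U_4'$ to contain $U_1'$ and a $3$-plane of $U_4$, or uses the freedom in $U_1'$. \textbf{The main obstacle} is exactly this last term: reconciling the ``old'' line $U_1$ appearing in the $\cA$-condition with the ``new'' moving line $U_1'\in\PP(U_4)$ for which we want membership in $\cC$. I expect the fix is that when $U_1'\subset U_4$ and $U_1\subset U_4$, the term $\wedge^3 V_8\wedge U_1$ can be absorbed: write $v=\alpha+\beta+\gamma$ with $\alpha\in\wedge^3 U_4\wedge V_8$, $\beta\in\wedge^4 U_7$, $\gamma\in\wedge^3 V_8\wedge U_1$; modulo $\wedge^3 V_8\wedge U_1'$ and $(\wedge^2 U_4)\wedge(\wedge^2 V_8)$ one shows $\gamma$ dies because $\gamma\in\wedge^3 V_8\wedge U_1\subset\wedge^3 V_8\wedge U_4$, and the key point is $\wedge^3 V_8\wedge U_4\subset(\wedge^2 U_4)\wedge(\wedge^2 V_8)+\wedge^3 V_8\wedge U_1'$ whenever $U_1'\subset U_4$ — which is elementary linear algebra in $U_4$ (split off $U_1'$ and note the remaining $3$-plane lies in $U_4$, contributing to $(\wedge^2 U_4)\wedge\wedge^2 V_8$ after using one more $U_4$-direction, available since $\dim U_4=4\ge 3$). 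I would verify this with an explicit adapted basis $e_1,\dots,e_8$ with $U_1'=\langle e_1\rangle$, $U_4=\langle e_1,\dots,e_4\rangle$, $U_7=\langle e_1,\dots,e_7\rangle$, which makes every containment a one-line check, and then conclude $[U_1']\in\cC$; since $\PP(U_4)\simeq\PP^3$ this shows the whole $\PP^3$ lies in $\cC$.
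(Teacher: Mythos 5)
Your reduction to finding, for each line $U_1'\subset U_4$, a four-plane $U_4'\supset U_1'$ with $v\in(\wedge^2U_4')\wedge(\wedge^2V_8)+\wedge^3V_8\wedge U_1'$ is the right starting point, and you correctly locate the difficulty in the mismatch between the old line $U_1$ and the moving line $U_1'$. But the absorption steps you propose are false, and the choice $U_4'=U_4$ cannot be repaired. First, $\wedge^4U_7\not\subset(\wedge^2U_4)\wedge(\wedge^2V_8)$: your decomposition of $\wedge^4U_7$ omits the summand $U_4\wedge(\wedge^3W)$, where $W$ is a complement of $U_4$ in $U_7$ (a three-plane, so this summand is nonzero); e.g.\ with $U_4=\langle e_1,\dots,e_4\rangle$, $U_7=\langle e_1,\dots,e_7\rangle$, the tensor $e_4\wedge e_5\wedge e_6\wedge e_7$ has only one factor in $U_4$ and lies in neither $(\wedge^2U_4)\wedge(\wedge^2V_8)$ nor $\wedge^3V_8\wedge U_1'$ for $U_1'=\langle e_2\rangle$. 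Second, and more seriously, your ``key point'' $\wedge^3V_8\wedge U_4\subset(\wedge^2U_4)\wedge(\wedge^2V_8)+\wedge^3V_8\wedge U_1'$ is also false: $e_1\wedge e_5\wedge e_6\wedge e_7$ again has exactly one factor in $U_4$ and none along $U_1'=\langle e_2\rangle$, so it lies in neither summand. In both cases the obstruction is the same: a decomposable four-vector with a single $U_4$-factor transverse to $U_1'$ cannot be split as required, and no amount of rearranging inside $U_4$ fixes this, because the complementary three factors live outside $U_4$.

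The actual proof must therefore choose $U_4'$ \emph{depending on $v$}, not just on the flag. The paper normalizes $v=e_1\wedge w+v'+e_2\wedge e_3\wedge e_4\wedge e_8$ with $v'\in\wedge^4U_7$ and $U_4=\langle e_1,\dots,e_4\rangle$, takes $U_1'=\CC e_2$ generic, and works modulo $e_1,e_2$: there $w$ becomes a three-form in six variables, which generically is a sum of two decomposable forms $a\wedge b\wedge c+d\wedge e\wedge f$ (the secant variety of $G(3,6)$ fills its Pl\"ucker space), while $v'$ becomes a four-form in five variables and can be arranged as $a\wedge b\wedge d\wedge e$. Setting $U_4'=\langle e_1,e_2,a,d\rangle$ then makes every term land in $(\wedge^2U_4')\wedge(\wedge^2V_8)+\wedge^3V_8\wedge U_1'$, and one concludes for general $U_1'$ and hence for all of $\PP(U_4)$ by closedness of $\cC$. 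This use of the secant variety of $G(3,6)$ is the missing idea; without some such input your argument cannot close.
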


\proof 
Consider a point of $\cA$ and the associated flag $U_1\subset U_4\subset U_7$.
By the very definition of $\cA$, this means we can write 
$$v=e_1\wedge w+v'+e_2\wedge e_3\wedge e_4\wedge e_8$$
for some vectors $e_1\in U_1$ and  $e_2,e_3,e_4\in U_4$, with $w\in \wedge^3V_8$ 
and $v'\in\wedge^4U_7$. Under the generality hypothesis we can suppose that 
$U_4=\langle e_1,e_2,e_3,e_4\rangle$, and it suffices to check that $U'_1=\CC e_2$
defines a point of $\cC$. 

Modulo $e_1$ and $e_2$, the tensor $w$ if a three-form in six variables. Since the 
secant of the Grassmannian $G(3,6)$ in its Pl\"ucker embedding fills in the whole ambient projective space, generically we can write $w=a\wedge b\wedge c+d\wedge e\wedge f$
modulo $e_1$ and $e_2$, for some vectors $a,b,c,d,e,f$. Modulo $e_1$ and $e_2$ again,
$v'$ is a four-form in only five variables, so it defines a hyperplane that will cut 
the three-dimensional space $\langle a,b,c\rangle$ in codimension one, say along 
 $\langle a,b\rangle$, and similarly it will cut $\langle d,e,f\rangle$ in codimension one, say along $\langle d,e\rangle$. In other words, we may suppose that  modulo $e_1$ and $e_2$, $v'=a\wedge b\wedge d\wedge e$. But then, modulo $e_2$ we get 
 $$v=e_1\wedge (a\wedge b\wedge c+d\wedge e\wedge f)+a\wedge b\wedge d\wedge e.$$
 So $v$ belongs to $(\wedge^2U'_4)\wedge (\wedge^2V_8)+\wedge^3V_8\wedge U'_1$ if $U'_4=\langle e_1,e_2,a,d\rangle.$ The existence of such a space $U'_4\supset U'_1$ is precisely the 
 required condition for $U'_1$ to belong to $\cC$, so we are done. \qed

\begin{lemma}\label{U4covers}
The family $\PP(\cU_4)$ covers the Coble quartic.
\end{lemma}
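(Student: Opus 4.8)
The plan is to show that the morphism $q\colon\PP(\cU_4)\to\PP(V_8)$ is dominant onto $\cC$. By the previous lemma its image lies in $\cC$, and it is closed since $q$ is proper; moreover $\cA$ is irreducible of dimension three (a torsor over an abelian threefold), so $\PP(\cU_4)$ is an irreducible sixfold, while $\cC$ is an irreducible hypersurface in $\PP^7$, again of dimension six. As $\PP(\cU_4)$ is smooth, it therefore suffices to exhibit a single point at which the differential $dq$ is injective: the image then has dimension six and, being an irreducible closed subvariety of the irreducible sixfold $\cC$, equals it.

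First I would compute $dq$ at a general point $(a_0,[L_0])$ of $\PP(\cU_4)$. Differentiating the bundle projection $\PP(\cU_4)\to\cA$ yields a surjection $T_{(a_0,[L_0])}\PP(\cU_4)\to T_{a_0}\cA$ whose kernel is the tangent to the fibre, $\Hom(L_0,U_4(a_0)/L_0)$; on this kernel $dq$ is the inclusion $\Hom(L_0,U_4(a_0)/L_0)\hookrightarrow\Hom(L_0,V_8/L_0)$ of first-order deformations, reflecting that $\PP(U_4(a_0))$ is a linear subspace of $\PP(V_8)$. One checks that for $\xi\in T_{(a_0,[L_0])}\PP(\cU_4)$ with image $\bar\xi\in T_{a_0}\cA$, the class of $dq(\xi)$ in $\Hom(L_0,V_8/U_4(a_0))$ equals the image of $\bar\xi$ under the composite
$$T_{a_0}\cA\lra\Hom\bigl(U_4(a_0),V_8/U_4(a_0)\bigr)\lra\Hom\bigl(L_0,V_8/U_4(a_0)\bigr),$$
where the first arrow is the differential of $a\mapsto U_4(a)$ and the second is restriction to $L_0$. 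Hence $dq$ is injective at $(a_0,[L_0])$ as soon as this composite is injective; and if the first arrow is injective, the composite fails to be injective only for $[L_0]$ in a proper closed subset of $\PP(U_4(a_0))$, so a general $L_0$ works. Everything thus reduces to: the map $\cA\to G(4,V_8)$, $a\mapsto U_4(a)$, is an immersion at a general point, i.e. generically finite onto its image.

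This last reduction is where I expect the real work. I would argue by contradiction: if $\cA\to G(4,V_8)$ were not generically finite, a general fibre would contain an irreducible curve $\Gamma$ along which $U_4(a)$ is a fixed four-plane $U_4^0$; since $U_1(a)\subset U_4(a)=U_4^0$ for all $a\in\Gamma$ and $a\mapsto[U_1(a)]$ is the finite double cover $\cA\to\cK$, it would follow that $\PP(U_4^0)\cap\cK$ contains a curve. But $\cK$ has codimension four in $\PP^7$, so a general three-plane meets it in finitely many points, and $B=\{W\in G(4,V_8):\dim(\PP(W)\cap\cK)\ge1\}$ is a proper closed subvariety; it then only remains to check that the image of $\cA$ in $G(4,V_8)$ is not contained in $B$. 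The most robust way to settle this — the step I would expect to require some care — is to verify it on one explicit flag, using the normal form $v=\sum_i c_ih_i$ in the Cartan subspace together with the Kempf-collapsing descriptions of $\cK$ and of $\cA$ to produce a single $a\in\cA$ with $\PP(U_4(a))\cap\cK$ finite. Once $\cA\to G(4,V_8)$ is known to be generically finite, the reductions above give a point where $dq$ is injective, and irreducibility of $\PP(\cU_4)$ and $\cC$ forces $q(\PP(\cU_4))=\cC$, i.e. the family $\PP(\cU_4)$ covers the Coble quartic.
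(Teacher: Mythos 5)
Your overall strategy (image is closed and irreducible inside the irreducible sixfold $\cC$, so it suffices to find one point where $dq$ is injective, which you reduce to generic finiteness of $a\mapsto U_4(a)$) is sound up to the last step, but the last step is where the argument breaks irreparably. You propose to rule out positive-dimensional fibres of $\cA\to G(4,V_8)$ by producing a single $a\in\cA$ with $\PP(U_4(a))\cap\cK$ finite. No such $a$ exists: once the lemma and the surrounding proposition are established, the paper records as a corollary that for \emph{every} point of $\cA$ the intersection $\PP(U_4(a))\cap\cK$ is a copy of the genus-three curve $C$ (this is the classical fact that each $\PP_L$ of the ruling meets the Kummer threefold along $C_L$). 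So the locus $B=\{W:\dim(\PP(W)\cap\cK)\ge 1\}$ contains the entire image of $\cA$ in $G(4,V_8)$, and the contradiction you are after cannot be derived; you would need an entirely different argument for generic finiteness of $\cA\to G(4,V_8)$. There is also a secondary gap earlier: even granting that $T_{a_0}\cA\to\Hom(U_4,V_8/U_4)$ is injective, its image is only a $3$-dimensional subspace $W$, and if every element of $W$ had rank one (e.g.\ $W\subset \Phi\otimes\ell$ for a $3$-dimensional $\Phi\subset U_4^\vee$) then every line $L_0\subset U_4$ would lie in the kernel of some nonzero element of $W$, so the composite $T_{a_0}\cA\to\Hom(L_0,V_8/U_4)$ would fail to be injective for \emph{all} $L_0$; your claim that failure occurs only on a proper closed subset of $\PP(U_4(a_0))$ needs a nondegeneracy statement about $W$ that you do not supply.

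For comparison, the paper sidesteps all of this with an enumerative computation: since $\cA$ is the zero locus of a general section of a rank $19$ bundle $\cG$ on $Fl(1,4,7,V_8)$, the degree of $\PP(\cU_4)\to\PP(V_8)$ against the hyperplane class is
$$\int_{\PP(\cU_4)}c_1(\cU_1^\vee)^6=\int_{\cA}s_3(\cU_4^\vee)=\int_{Fl(1,4,7,V_8)}c_{19}(\cG)\,s_3(\cU_4^\vee)=32\neq 0,$$
which forces the image to be six-dimensional (and recovers the eight $\PP^3$'s through a general point of the degree-four hypersurface). If you want a computation-free route, you would still need to establish generic finiteness of $\cA\to G(4,V_8)$ by other means, e.g.\ by showing that two distinct flags $(U_1\subset U_7)$ in $\cA$ generically determine distinct $U_4$'s directly from the defining condition \eqref{defA}.
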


\proof  This can be done by a Chern class computation, being equivalent to the 
fact that the degree of $\PP(\cU_4)$ with respect to the relative hyperplane
class does not vanish. Notice that by Equation \eqref{defA}, $\cA$ can be considered as a subvariety of $Fl(1,4,7,V_8)$. Even more, it is the zero locus in the flag manifold of the section $\overline{v}$ of the rank $19$ vector  bundle 
$$\cG:=\wedge^4 V_8 /(\wedge^3 \cU_4\wedge V_8 + \wedge^4 \cU_7 + \wedge^3 V_8\wedge \cU_1)$$ over $Fl(1,4,7,V_8)$ defined by $v$. Since this section is general, the class of $\cA$ in the Chow ring of the flag manifold is the top Chern class of $\cG$. So the degree we are looking for is 
$$\int_{\PP(\cU_4)}c_1(\cU_1^\vee)^6=\int_{\cA}s_3(\cU_4^\vee)=\int_{Fl(1,4,7,V_8)}c_{19}(\cG)s_3(\cU_4^\vee)=32,$$
as can be computed using \cite[Schubert2 package]{Macaulay2}. This implies the claim.
\qed

\medskip\noindent {\it Remark.} $32$ is the expected number: since the Coble hypersurface 
has degree $4$, we recover the fact that exactly $8$ $\PP^3$'s of the ruling pass through
a general point of the quartic, as recalled in the proof of Proposition \ref{vmrt-ruling}.

\medskip The previous statement allows to reconstruct the curve $C$ purely in terms 
of the four-form and its associated orbital degeneracy loci. Indeed, we have recalled
that a $\PP^3$ of the ruling meets the Kummer threefold along a copy of the curve. 

\begin{coro}
For any point of $\cA$, with associated flag $(U_1\subset U_4\subset U_7)$, the intersection of $\PP (U_4)$ with $\cK$ is a copy of the curve $C$. 
\end{coro}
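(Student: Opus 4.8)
The plan is to combine the two geometric facts just established with the classical structure of the ruling recalled in Section \ref{sec_ruling_lines}. By Lemma \ref{U4covers} and the preceding lemma, the family $\PP(\cU_4)$ over $\cA$ is a covering family of three-planes inside $\cC$, and by the Proposition identifying $\PP(\cU_4)$ with $\PP(\cR)$, this family \emph{is} the ruling $\PP(\cR)\to\Pic^1(C)$. Thus for a point of $\cA$ with flag $(U_1\subset U_4\subset U_7)$, the space $\PP(U_4)$ is one of the $\PP^3$'s of the ruling, say $\PP_L$ for the corresponding $L\in\Pic^1(C)$. We recalled in Section \ref{sec_ruling_lines}, citing \cite[1.1]{opp}, that $\PP_L$ meets the Kummer threefold along a copy $C_L$ of $C$. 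Since $\cK=D_{Y_4}(v)$ is exactly the Kummer threefold (the Proposition in Section 3.1), we conclude $\PP(U_4)\cap\cK\cong C$, which is the assertion.

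The only genuine point to check is that the identification of $\PP(\cU_4)$ with $\PP(\cR)$ is compatible with the two notions of ``Kummer threefold'' in play, namely $D_{Y_4}(v)$ on the orbital-degeneracy side and the classical $|2\Theta|$-Kummer of $\Jac(C)$ on the moduli side. But this compatibility is already baked into the earlier results: the curve $C$ and its Kummer are \emph{defined} from $v$ via these loci, and the Proposition identifying the two rulings is an identification of subvarieties of $\PP(V_8)=|2\Theta|$, hence automatically matches $D_{Y_4}(v)$ with the Kummer of the same $C$. So the corollary is essentially a formal consequence, and I would write it as a two-line deduction: apply the ruling identification to land in $\PP_L$, then invoke \cite[1.1]{opp}.

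If one wanted a more self-contained argument that does not route through \cite{opp}, the alternative is to compute $\PP(U_4)\cap\cK$ directly from the explicit equations. Over a point of $\cA$ we have the normal form $v=e_1\wedge w+v'+e_2\wedge e_3\wedge e_4\wedge e_8$ with $U_4=\langle e_1,e_2,e_3,e_4\rangle$ and $v'\in\wedge^4U_7$; a point $[U_1']\in\PP(U_4)$ lies in $\cK$ precisely when there exist $U_1'\subset U_2'\subset U_6'$ with $v\in\wedge^4U_6'+\wedge^2U_6'\wedge U_2'\wedge V_8+\wedge^3V_8\wedge U_1'$. One would parametrize $U_1'=\CC(x_1e_1+\cdots+x_4e_4)$, plug in, and show the existence condition cuts out a plane quartic in $\PP(U_4)\cong\PP^3$ — indeed a curve abstractly isomorphic to the plane quartic $C$ attached to $v$. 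This is more work and less illuminating; I would relegate it to a remark at most. The main (very mild) obstacle is purely expository: making sure the reader sees that no new input is needed beyond the ruling identification plus \cite[1.1]{opp}, so that the corollary genuinely follows.
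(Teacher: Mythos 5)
Your proposal is correct and takes essentially the same route as the paper: the corollary is stated there as an immediate consequence of the proposition identifying $\PP(\cU_4)$ with the ruling $\PP(\cR)$, combined with the fact recalled in Section \ref{sec_ruling_lines} (from \cite[1.1]{opp}) that each $\PP_L$ of the ruling meets the Kummer threefold along a copy $C_L$ of $C$. Your extra remarks on the compatibility of the two descriptions of the Kummer and the optional direct computation are reasonable but not needed, exactly as you suspect.
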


And of course we also recover the family of lines in the ruling as a quadric bundle. 
Indeed, the same arguments as in section 2.3 yield:

\begin{coro}
The total space of the fiber bundle $G(2,\cU_4)$ over $\cA$ maps 
birationally to the family $\cF_R$ in $G(2,V_8)$.
\end{coro}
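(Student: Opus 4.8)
The plan is to mimic exactly the structure of the argument already used for $\cF_H$ and for the self-duality discussion: describe the candidate family of lines in the ruling as a relative orbital degeneracy locus over $\cA$, check that it has the right dimension, and then match it up with the geometric ruling by lines coming from extensions $0\to L\to E\to K_C\otimes L^\vee\to 0$. Concretely, over a point of $\cA$ with associated flag $(U_1\subset U_4\subset U_7)$ we have the rank-four subbundle $\cU_4\subset V_8\otimes\cO_\cA$ constructed right before the statement, and each $2$-plane $U_2$ with $U_1\subset U_2\subset U_4$ gives a line $\PP(U_2)\subset\PP(U_4)\subset\cC$. So I would first form the relative Grassmannian $G(2,\cU_4)\to\cA$, or equivalently the flag bundle parametrizing $(U_1\subset U_2\subset U_4\subset U_7)$ with $v$ in the appropriate subspace, and observe that this is visibly a fiber bundle whose fibers are copies of $G(2,4)\simeq Q_4$, a four-dimensional quadric; since $\dim\cA=3$, the total space is seven-dimensional.

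Next I would produce the morphism to $G(2,V_8)$ by sending $(U_1\subset U_2\subset U_4\subset U_7)$ to the point $[U_2]$, and argue it lands in $\cF_R$: by the previous Lemma the image of $\PP(\cU_4)$ is contained in $\cC$, hence the line $\PP(U_2)$ lies in $\cC$; and by the Corollary just stated, $\PP(U_4)$ is one of the $\PP^3$'s of the ruling (its intersection with $\cK$ is a copy of $C$), so the line $\PP(U_2)$ is a line in the ruling, i.e.\ a point of $\cF_R$. Conversely, given a general line $\ell\in\cF_R$, it lies in a unique $\PP^3$ of the ruling (Corollary after Proposition \ref{vmrt-ruling}), and by the identification $\PP(\cU_4)=\PP(\cR)$ (the Proposition just proved) this $\PP^3$ is $\PP(U_4)$ for a unique point of $\cA$; the line $\ell$ then determines the $2$-plane $U_2\subset U_4$. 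This gives an inverse rational map, so the morphism $G(2,\cU_4)\dashrightarrow\cF_R$ is birational. Since both sides are seven-dimensional — $\cF_R$ being the birational image of the quadric bundle $G(2,\cR)$ over $\Pic^1(C)$, a $(3+4)$-fold — the degree count is consistent, and one concludes $G(2,\cU_4)$ maps birationally onto $\cF_R$ in $G(2,V_8)$.

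The only genuinely delicate point, as in the analogous $\cF_H$ statement, is that these identifications are only \emph{birational}: a non-general point of $\cA$ might fail to define $U_4$ (the decomposable tensor in $\wedge^3(U_7/U_1)$ could degenerate), and a non-general line of the ruling might lie in two different $\PP^3$'s (this is exactly the locus where $K_C-L-M$ is effective, recalled in Section \ref{sec_ruling_lines}). So the argument should be stated on a dense open locus where $\cU_4$ is honestly defined and where the map $\PP(\cR)\to\cC$ is étale of degree $8$, and the conclusion is birationality, not an isomorphism — which is all that is claimed. I would therefore simply invoke "the same arguments as in section 2.3" as the paper suggests, spelling out only the translation between the vector-space data and the extension data: the flag $U_1\subset U_4$ corresponds to the line bundle $L$ together with the choice of $\PP^3=\PP(\Ext^1(K_C\otimes L^\vee,L))$, and a $2$-plane $U_2\subset U_4$ corresponds to a point of $G(2,\cR)$ over $L$. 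No further computation is needed beyond the dimension bookkeeping already implicit in the preceding results.
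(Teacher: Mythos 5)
Your proposal is correct and follows essentially the same route as the paper, which simply invokes ``the same arguments as in section 2.3'': namely, the identification $\PP(\cU_4)=\PP(\cR)$ from the preceding Proposition, the description of $\cF_R$ as the birational image of the quadric bundle $G(2,\cR)$, and the uniqueness of the $\PP^3$ of the ruling through a general line. Your explicit construction of the inverse rational map and the dimension bookkeeping are a faithful expansion of what the paper leaves implicit.
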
 

\subsection{Hecke lines from alternating forms}
\label{sec_hecke}
In the previous section we have defined some ODL $D_{Y_i}(v)$ from orbits inside the space of three-forms in seven variables (i.e., in the notation of the previous sections, inside $\wedge^3 V_7$). We will use a similar construction to obtain ODL inside the Grassmannian $G(2,V_8)$. The Borel-Weil theorem gives an isomorphism  
$$\wedge^4V_8\simeq H^0(G(2,V_8),\wedge^4\cQ)=H^0(G(2,V_8),\wedge^2\cQ^\vee(1)),$$
where $\cQ$ denotes the rank six quotient vector bundle on $G(2,V_8)$. Thus, in this case, we need to look at two-forms in six variables.

If $V_6$ is as before a six-dimensional complex vector space, 
$\wedge^4 V_6^\vee\simeq \wedge^2 V_6$ has only two proper $\GL(V_6)$-orbits
closures, that we will index by their codimension: the Pfaffian cubic 
hypersurface $Z_1$ and its singular locus $Z_6$, that is the cone over the Grassmannian $G(2,V_6)$. 
These allow us to construct inside $G(2,V_8)$ the two orbital degeneracy loci $D_{Z_1}(v)$ and $D_{Z_6}(v)$.

\smallskip

Let us first consider $D_{Z_6}(v)$, which can also be defined by
\[
D_{Z_6}(v):=\Big\{[U_2]\in G(2,V_8)\mid \exists U_6 \supset U_2, \;\;
v \in \wedge^3 V_8\wedge U_2+\wedge^4 U_6\Big\}.
\]

\begin{lemma}
$D_{Z_6}(v)$ is a smooth Fano sixfold of even index.
\end{lemma}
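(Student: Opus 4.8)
The plan is to exhibit $D_{Z_6}(v)$ as the zero locus of a general section of an explicit globally generated vector bundle on a flag bundle over $G(2,V_8)$, and then read off smoothness, dimension, Fano-ness, and the parity of the index from that description. Concretely, I would introduce the flag bundle $\pi\colon Fl(2,6,V_8)\to G(2,V_8)$ parametrizing flags $(U_2\subset U_6)$, with tautological subbundles $\cU_2\subset\cU_6\subset V_8\otimes\cO$. As in the treatment of the Kummer threefold via Kempf collapsings earlier in the paper, the condition $v\in\wedge^3 V_8\wedge U_2+\wedge^4 U_6$ defining a point of the preimage $Z(v)\subset Fl(2,6,V_8)$ is exactly the vanishing of the image $\bar v$ of $v$ in the quotient bundle
\[
\cG:=\bigl(\wedge^4 V_8\bigr)\otimes\cO\big/\bigl(\wedge^3 V_8\wedge\cU_2+\wedge^4\cU_6\bigr),
\]
which is globally generated (it is a quotient of the trivial bundle $\wedge^4 V_8\otimes\cO$). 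For $v$ general the induced section is general, so by Bertini $Z(v)$ is smooth of the expected dimension, and the projection $Z(v)\to D_{Z_6}(v)$ is an isomorphism onto its image because, as noted in the definition of the locus modeled on the cone over $G(2,V_6)$, the subspace $U_6$ is uniquely determined by $U_2$ (the decomposable $2$-form in $V_8/U_2$ has a well-defined $4$-dimensional "support" whose preimage is $U_6$); hence $D_{Z_6}(v)$ itself is smooth of the expected dimension.

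Next I would compute that expected dimension. The fiber $Fl(2,6,V_8/U_2)$-part of $Fl(2,6,V_8)$ has dimension $\dim G(2,8)+\dim G(4,6)=12+8=20$, and the corank condition cuts down by $\rank\cG$; equivalently, one tracks the local model $\wedge^4 V_6^\vee\supset Z_6$, which has codimension $6$ in $\wedge^4 V_6^\vee$, so $Z(v)$ — and therefore $D_{Z_6}(v)$ — has dimension $20-(\dim G(4,6)+6)=20-(8+6)=6$. (The bookkeeping here is routine but should be presented carefully: the clean way is to say $D_{Z_6}(v)$ has codimension equal to $\mathrm{codim}_{\wedge^4 V_6^\vee} Z_6=6$ in $G(2,V_8)$, by the general transversality results of \cite{bfmt} applied to the present relative setting, giving $\dim D_{Z_6}(v)=12-6=6$.) Fano-ness then follows from a Chern class / adjunction computation on $Fl(2,6,V_8)$: the canonical bundle of $Z(v)$ is $\omega_{Fl}\otimes\det\cG$ restricted to $Z(v)$, and one checks this is negative, e.g. directly with the Schubert2 package of \cite{Macaulay2} as is done elsewhere in the paper. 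To pin down the index and its parity, I would compute $-K_{D_{Z_6}(v)}$ explicitly in terms of the Schubert classes $\sigma_1$ restricted from $G(2,V_8)$ (the Picard group of the smooth Fano sixfold $D_{Z_6}(v)$ is $\ZZ$, generated by the Plücker class, since it is a general complete intersection-type locus in $G(2,V_8)$ of the relevant codimension, by a Lefschetz-type argument), and verify that $-K=r\,\sigma_1$ with $r$ even.

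The main obstacle I anticipate is the index computation, specifically showing $\Pic D_{Z_6}(v)=\ZZ\sigma_1$ and then identifying the coefficient $r$ and checking $2\mid r$. The Picard rank statement requires a Lefschetz/Barth–Larsen type argument adapted to degeneracy loci inside Grassmannians — this should follow from the fact that $D_{Z_6}(v)$ is cut out (after passing to $Fl(2,6,V_8)$) by a section of an ample-enough bundle, but writing it cleanly needs care. For the value of $r$: on $Fl(2,6,V_8)$ one has $c_1(\omega_{Fl})=-c_1(\cU_2^\vee)\otimes(\text{stuff})-\cdots$ in terms of the relative tautological bundles, and $c_1(\det\cG)=c_1(\wedge^3 V_8\wedge\cU_2+\wedge^4\cU_6)$, which expands into Chern classes of $\cU_2$ and $\cU_6/\cU_2$; combining and restricting to the fibers shows the dependence on the flag directions cancels (as it must, since $D_{Z_6}(v)$ lives downstairs on $G(2,V_8)$), leaving $-K_{D_{Z_6}(v)}=r\,\sigma_1$. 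I expect $r$ to come out to a small even integer (plausibly $r=6$, matching $-K_{G(2,8)}=8\sigma_1$ minus the first Chern class of the relevant quotient bundle), and the evenness is exactly the "even index" claimed; I would confirm the numerics with Schubert2 as the paper does for the analogous degree-$32$ computation in Lemma \ref{U4covers}.
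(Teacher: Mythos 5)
Your first half---realizing $D_{Z_6}(v)$ as the zero locus $Z_6(v)$ of the section of the rank-$14$ quotient bundle $\wedge^4V_8\otimes\cO/(\wedge^3V_8\wedge\cU_2+\wedge^4\cU_6)$ on the $20$-dimensional flag bundle $Fl(2,6,V_8)$, deducing smoothness and dimension $6$ by genericity of the section, and noting that the projection to $G(2,V_8)$ is an isomorphism because $U_6$ is recovered from $U_2$ as the preimage of the kernel of the rank-two form $v \bmod U_2$---is exactly what the paper does. The gap is in the Fano and even-index claims. Adjunction on the flag bundle gives $K_{Z_6(v)}=\det(\cU_2)^{-3}\otimes\det(\cU_6)^{5}$, and this expression involves $\det(\cU_6)$, which is \emph{not} pulled back from $G(2,V_8)$; there is no ``cancellation of the flag directions'' at the level of Chern classes, and no Schubert2 computation of intersection numbers will by itself certify that this line bundle is anti-ample or identify it as an even multiple of $\sigma_1$. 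Your fallback---that $\Pic$ of the sixfold is $\ZZ\sigma_1$ by a Lefschetz/Barth--Larsen argument---is also not available here: the locus has codimension $6$ in the $12$-dimensional $G(2,V_8)$, far outside the Barth--Larsen range, and the bundle cutting out $Z_6(v)$ is a quotient of a trivial bundle, hence globally generated but not ample, so Sommese-type Lefschetz theorems for zero loci do not apply. (Your guess $r=6$ is also off; the correct answer is $-K=2\sigma_1$.)

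The idea you are missing is the paper's key observation: on $Z_6(v)$ the form $v\bmod U_2$ has kernel exactly $U_6/U_2$, so it induces a \emph{nondegenerate} skew form on $V_8/U_6$ with values in $\det(V_8/U_2)$, whence $V_8/U_6\simeq(V_8/U_6)^\vee\otimes\det(V_8/U_2)$ and, taking determinants, $\det(\cU_2)^2\simeq\det(\cU_6)^2$. Thus $\cL=\det(\cU_6)\otimes\det(\cU_2)^\vee$ is $2$-torsion on $Z_6(v)$, and one may rewrite $K_{Z_6(v)}=\det(\cU_2)\otimes\det(\cU_6)\otimes\cL^{\otimes 4}$. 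Since $\det(\cU_2)^\vee\otimes\det(\cU_6)^\vee$ is very ample on the flag variety and $\cL$ is torsion, $Z_6(v)$ is Fano; then the Picard group of a Fano is torsion-free, so $\cL$ is in fact trivial, giving $K_{Z_6(v)}=\det(\cU_2)^{2}$ and hence even index. Note the order of the argument: Fano-ness is established \emph{before}, and is then used to prove, the triviality of $\cL$, which is what finally lets one express $-K$ as $2\sigma_1$. Without some such geometric input relating $\det(\cU_6)$ to $\det(\cU_2)$ on the zero locus, your plan stalls at the adjunction formula.
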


\proof By definition, $D_{Z_6}(v)$ is the projection in $G(2,V_8)$ of the locus 
$Z_6(v)$ in $Fl(2,6,V_8)$ parametrizing flags $(U_2\subset U_6\subset V_8)$
such that $v$ belongs to the $56$-dimensional space 
$\wedge^3 V_8\wedge U_2+\wedge^4 U_6$. Taking the quotient of $\wedge^4V_8$ by the latter, we get a rank $14$ vector bundle $\cP$ on $Fl(2,6,V_8)$. Moreover $v$ defines a 
generic section of this vector bundle and $Z_6(v)$ is the zero-locus of 
this section. Since $Fl(2,6,V_8)$ has dimension $20$, we deduce that $Z_6(v)$
is smooth of dimension $6$, and that 
its canonical bundle is given by the adjunction formula. 
A straightforward computation yields
$$K_{Z_6(v)}=\det(U_2)^{-3}\otimes \det(U_6)^{5}.$$
On the other hand, for any $[U_2]\in G(2,V_8)$, the quotient of $\wedge^4V_8$ by 
 $\wedge^3 V_8\wedge U_2$ is isomorphic to $\wedge^4(V_8/U_2)\simeq 
\wedge^2(V_8/U_2)^\vee\otimes \det (V_8/U_2)$. This is a space of skew-symmetric forms in
six dimensions, and the existence of $U_6$ exactly means that $v$ defines a 
skew-symmetric form in  $\wedge^2(V_8/U_2)^\vee\otimes \det (V_8/U_2)$ whose rank
is at most two. In fact the rank must be exactly two, since for $v$ generic, a simple dimension count shows that the rank can never be zero. In particular the projection 
of $Z_6(v)$ to $D_{Z_6}(v)$ is an isomorphism. 

More than that, the kernel of our two form on $V_8/U_2$ is $U_6/U_2$, so we get a
non-degenerate skew-symmetric form on the quotient $V_8/U_6$, which is therefore 
identified with its dual. To be precise, since the skew-symmetric form has values in 
 $\det (V_8/U_2)$, we get an isomorphism  $V_8/U_6\simeq (V_8/U_6)^\vee\otimes 
 \det (V_8/U_2)$. Taking determinants, we deduce that $\det (U_2)^2\simeq \det (U_6)^2$;
 in other words, the line bundle $\cL=\det (U_6)\otimes \det (U_2)^\vee$ is $2$-torsion 
 on  $Z_6(v)$.
 
But then we can rewrite the canonical bundle as 
$$K_{Z_6(v)}=\det(U_2)\otimes \det(U_6)\otimes \cL^{\otimes 4}.$$
Note that $\det(U_2)^\vee\otimes \det(U_6)^\vee$ is very ample on $Fl(2,6,V_8)$ since it defines its canonical Pl\"ucker type embedding. Since $\cL$ is torsion we deduce that 
 $Z_6(v)$ is Fano. But then its Picard group is torsion free, so $\cL$ is actually trivial. So finally $K_{Z_6(v)}=\det(U_2)^2$, hence the index is even. \qed

\medskip The previous discussion shows that $D_{Z_6}(v)$ is a Pfaffian locus 
defined by a skew-symmetric map $\psi_v : \cQ\ra \cQ^\vee (1)$ associated with $v$. The rank four sheaf $\mathcal{K}er(\psi_v)$
(which is $\cU_6/\cU_2$ in the previous proof) fits into an exact sequence 
\begin{equation}\label{esker}
0\ra \mathcal{K}er(\psi_v)\lra \cQ\lra \cQ^\vee (1)\lra \mathcal{K}er(\psi_v)^\vee (1)\lra 0.
\end{equation}

Let us set once and for all the more compact notation $D:=D_{Z_6}(v)$ and $G:=G(2,V_8)$. The exact sequence \eqref{esker} allows to describe the normal bundle $\cN_{D/G}$ as follows. 

\begin{lemma}\label{normal} We have isomorphisms
$\cN_{D/G}\simeq \wedge^2\mathcal{K}er(\psi_v)^\vee (1)$, and $\cN_{D/G}^\vee\simeq \cN_{D/G}(-2)$.
\end{lemma}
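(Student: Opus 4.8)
The plan is to extract the normal bundle directly from the description of $D$ as a Pfaffian (rank-$\le 4$) locus inside $G$, using the degeneracy-locus structure of the skew map $\psi_v\colon\cQ\to\cQ^\vee(1)$. First I would recall the standard fact about degeneracy loci of symmetric or skew morphisms: if $\psi\colon\cE\to\cF$ is a generic skew-symmetric morphism (twisted so that $\psi^\vee=\psi$ up to a line bundle) and $D_r$ is the locus where $\operatorname{rank}\psi\le 2r$, then along the open stratum where the rank is exactly $2r$ the normal bundle is $\wedge^2\mathcal{K}er(\psi)^\vee\otimes(\text{twist})$. In our situation $\cE=\cQ$, $\cF=\cQ^\vee(1)$, the twist is $\cO(1)$, and $D=D_{Z_6}(v)$ is exactly the locus where $\psi_v$ drops to rank $4$; moreover by the previous lemma the projection $Z_6(v)\to D$ is an isomorphism, so $D$ is smooth and everywhere of expected codimension $6=\binom{4}{2}$, which is precisely the case where this formula is clean. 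Concretely, over $D$ the kernel sheaf $\mathcal{K}er(\psi_v)=\cU_6/\cU_2$ is a rank-$4$ vector bundle, and the Pfaffian equations cut out $D$ with $N_{D/G}\simeq \wedge^2\mathcal{K}er(\psi_v)^\vee(1)$; I would verify the twist by tracking that $\psi_v$ has target $\cQ^\vee(1)$, so the induced symmetric form on $\mathcal{K}er(\psi_v)^\vee$ (obstruction to deforming the kernel) carries a single factor of $\cO(1)$. Alternatively, and perhaps more self-containedly, one can run the Koszul/normal-bundle computation on $Z_6(v)\subset Fl(2,6,V_8)$: there $D=Z_6(v)$ is the zero locus of a generic section of $\cP=\wedge^4V_8/(\wedge^3V_8\wedge\cU_2+\wedge^4\cU_6)$, so $N_{Z_6(v)/Fl}\simeq\cP|_{Z_6(v)}$; then restrict the tangent sequence $0\to T_{Fl/G}\to T_{Fl}|_D\to T_G|_D\to 0$ and chase to identify $N_{D/G}$ as the cokernel, matching it with $\wedge^2(\cU_6/\cU_2)^\vee(1)$ by a weight count. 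Either route is a routine but slightly fiddly bundle computation; the main obstacle is just bookkeeping the various determinant twists correctly (the factor $\cO(1)=\det\cU_2^\vee$ and the $2$-torsion line bundle $\cL$ that was shown to be trivial in the proof of the preceding lemma).

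For the second isomorphism $N_{D/G}^\vee\simeq N_{D/G}(-2)$, I would argue as follows. Write $\cM=\mathcal{K}er(\psi_v)$, a rank-$4$ bundle, so $N_{D/G}\simeq\wedge^2\cM^\vee(1)$ and hence $N_{D/G}^\vee\simeq\wedge^2\cM(-1)$. Now $\wedge^2\cM^\vee\simeq\wedge^2\cM\otimes(\det\cM)^{-1}$ for a rank-$4$ bundle, so it suffices to show $\det\cM\simeq\cO(2)$ on $D$, i.e. $\det(\cU_6/\cU_2)\simeq\det\cU_2^{\otimes -2}$, equivalently $\det\cU_6\simeq\det\cU_2^{\otimes -1}$. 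But this is exactly the relation $\det(U_2)^2\simeq\det(U_6)^2$ established in the proof of Lemma \ref{normal}'s predecessor together with the triviality of $\cL=\det(U_6)\otimes\det(U_2)^\vee$ proved there. Given $\det\cM\simeq\cO(2)$, we get
\[
N_{D/G}^\vee\simeq\wedge^2\cM(-1)\simeq\wedge^2\cM^\vee\otimes\det\cM\otimes\cO(-1)\simeq\wedge^2\cM^\vee(1)\otimes\cO(-2)\simeq N_{D/G}(-2),
\]
which is the claim. I expect no real obstacle here beyond invoking the determinant identities already in hand; the only care needed is to use that $\cL$ is genuinely trivial (not merely $2$-torsion), which is why the Fano property of $D$ was invoked in the earlier proof.
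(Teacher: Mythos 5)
Your overall strategy coincides with the paper's: Lemma \ref{normal} is stated there without proof, as an immediate consequence of the Pfaffian presentation of $D$ via $\psi_v\colon\cQ\to\cQ^\vee(1)$ and the exact sequence (\ref{esker}), which is exactly the standard skew-symmetric degeneracy-locus computation you invoke. One cosmetic correction first: on $D=D_{Z_6}(v)$ the morphism $\psi_v$ drops to rank $2$, not rank $4$ --- its kernel $\mathcal{K}er(\psi_v)=\cU_6/\cU_2$ has rank $4$, whence the codimension $\binom{4}{2}=6$; the rank-$4$ locus is the hypersurface $D_{Z_1}(v)$.

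The substantive issue is in your derivation of $\cN_{D/G}^\vee\simeq\cN_{D/G}(-2)$. Writing $K=\mathcal{K}er(\psi_v)$, comparing $\cN_{D/G}^\vee=\wedge^2K(-1)=\wedge^2K^\vee\otimes\det K\otimes\cO(-1)$ with $\cN_{D/G}(-2)=\wedge^2K^\vee\otimes\cO(-1)$ shows that the condition you need is $\det K\simeq\cO_D$, \emph{not} $\det K\simeq\cO_D(2)$. Moreover your claimed equivalence ``$\det(\cU_6/\cU_2)\simeq\det\cU_2^{\otimes-2}$, i.e.\ $\det\cU_6\simeq\det\cU_2^{\otimes-1}$'' is not what the triviality of $\cL=\det(\cU_6)\otimes\det(\cU_2)^\vee$ gives: that triviality says $\det\cU_6\simeq\det\cU_2$, hence $\det(\cU_6/\cU_2)=\cL\simeq\cO_D$. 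Had $\det K$ really been $\cO_D(2)$, your chain would yield $\cN_{D/G}^\vee\simeq\cN_{D/G}$, which is false since $\cO_D(2)$ is ample and nontrivial. Fortunately the two slips cancel: the correct sufficient condition ($\det K$ trivial) is precisely what the triviality of $\cL$, established in the proof of the preceding lemma, delivers, and with that substitution your final displayed chain of isomorphisms is exactly right. So the argument is sound once the twist is fixed, and it matches the paper's (unwritten) reasoning.
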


We will use this information later on. 
Our next goal is to prove the following theorem.

\begin{theorem}
\label{thm_main}
For a generic $v\in \wedge^4 V_8$ the orbital degeneracy locus $D_{Z_6}(v)$ is isomorphic with the moduli space $\SU_C(2,\cO_C(c))$ of semistable rank two vector bundles on $C$ 
with fixed determinant $\cO_C(c)$, for a certain point $c\in C$.
\end{theorem}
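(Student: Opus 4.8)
The strategy is to identify $D=D_{Z_6}(v)$ with a family of Hecke lines in the Coble quartic, and to recognize that family as (a blow-down of) $\mathrm{U}_C(2,1)^{\mathrm{eff}}$ restricted to a single fibre, i.e.\ as $\SU_C(2,\cO_C(c))$. Concretely, I would proceed as follows. First, from the skew-symmetric map $\psi_v:\cQ\to\cQ^\vee(1)$ of rank $6$ whose degeneracy locus is $D$, I would extract the geometric content of the defining condition $v\in\wedge^3V_8\wedge U_2+\wedge^4U_6$: reducing $v$ modulo $U_2$ gives a skew form on $V_8/U_2$ of rank exactly $2$ (as shown in the proof of the preceding lemma), hence a decomposable bivector, hence a $2$-plane $U_4/U_2\subset V_8/U_2$, i.e.\ a flag $U_2\subset U_4\subset U_6\subset V_8$. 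I would then read off from Proposition in section 3.2 that the condition $v\in(\wedge^2U_4)\wedge(\wedge^2V_8)+\wedge^3V_8\wedge U_1$ characterizing the Coble quartic is \emph{implied}, for every line $U_1\subset U_2$, once $U_2\subset U_4$ is as above. This shows that $\PP(U_2)$ is a \emph{line contained in the Coble quartic $\cC$}, so the forgetful map $D\to G(2,V_8)$ lands in the variety of lines on $\cC$.

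The second step is to show that a generic such line is a Hecke line, and more precisely that $D$ maps to the family $\cF_H$. Here I would use the descriptions from section 2: a line on $\cC$ is either a Hecke line or lies in a $\PP^3$ of the ruling $\PP(\cR)$ (this dichotomy follows from \cite[Theorem 1.3]{opp} together with the fact that $\cF_H$ and $\cF_R$ are the only covering families). To rule out the ruling case generically, I would compare dimensions and VMRT's: by Proposition \ref{vmrt-ruling} the lines of the ruling through a general point of $\SU_C(2)$ form eight disjoint planes, whereas a line coming from $D$ carries, via the flag $U_2\subset U_4$, extra data incompatible with lying in a fixed $\PP^3$; alternatively, one notes that $D$ is $6$-dimensional, irreducible (being a zero locus of a generic section over an irreducible flag bundle), and a general point of $\cC$ lies on a finite, hence $6$-dimensional total family, of Hecke lines, matching $\dim\cF_H=6$. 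Thus the generic line parametrized by $D$ is Hecke, and $D$ maps into $\cF_H$; by irreducibility and dimension count this map is dominant, hence birational onto $\cF_H$ (or onto a distinguished component).

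The third step is to upgrade ``birational'' to ``isomorphic'' and to pin down the determinant. Recall from Remark \ref{hecke-fibers} that $\tilde\cF_H=\mathrm{U}_C(2,1)^{\mathrm{eff}}\to\cF_H$ is birational, and that $\mathrm{U}_C(2,1)^{\mathrm{eff}}$ fibres over $C$ with fibre $\SU_C(2,\cO_C(c))$. The point is that $D$, carrying the tautological flag $U_2\subset U_4\subset U_6$, remembers \emph{more} than the bare line: the extra vector space data should reconstruct the jumping point $c\in C$ and the bundle $F$ with $\det F=\cO_C(c)$. I would make this precise by building, over $D$, the rank-two bundle on $C\times D$ analogous to the $\cE$ of \cite[Remark 5.3]{pal}, using the Kempf-collapsing/relative-Pfaffian description (the kernel sheaf $\mathcal{K}er(\psi_v)=\cU_6/\cU_2$ and the curve $C=\PP(U_4)\cap\cK$ from the Corollary in section 4.1), thereby producing a morphism $D\to\mathrm{U}_C(2,1)^{\mathrm{eff}}$ inverse to the composite above up to the choice of $c$. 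Since $D$ is smooth (proved above) and $\SU_C(2,\cO_C(c))$ is smooth of dimension $6$, a birational morphism between them that is bijective (or: finite and birational between smooth projective varieties, hence an isomorphism by Zariski's main theorem) is an isomorphism; restricting to a fibre fixes the determinant to be $\cO_C(c)$ for the corresponding $c$.

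\textbf{Main obstacle.} The genuinely delicate point is the third step: producing the \emph{inverse} morphism in families and showing the identification is fibrewise over $C$ with the correct determinant, rather than merely a birational equivalence with $\cF_H$ or with $\mathrm{U}_C(2,1)^{\mathrm{eff}}$. This requires carefully tracking the universal bundle through the orbital-degeneracy-locus construction—relating the syzygy data in \eqref{esker} to the Hecke modification $0\to E\to F\to\cO_c\to 0$—and checking that the extra flag data $U_2\subset U_4\subset U_6$ on $D$ precisely encodes $(c,F)$ and not just the line $\PP(U_2)$. Establishing that this assignment is a well-defined algebraic morphism, and that it is inverse to the natural map from (a fibre of) $\mathrm{U}_C(2,1)^{\mathrm{eff}}$, is where the real work lies; the smoothness and dimension bookkeeping, by contrast, are routine given the Kempf-collapsing description and the Macaulay2 Chern-class computations already set up.
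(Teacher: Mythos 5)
Your overall outline (lines on the Coble quartic $\to$ Hecke lines $\to$ identification with a fiber of $\mathrm{U}_C(2,1)^{\mathrm{eff}}\to C$) is the same as the paper's, and your first step is essentially Proposition \ref{prop_hecke_in_coble}. But the two decisive steps are not carried out, and the tool that makes both of them work in the paper is missing from your argument: the fact that $D=D_{Z_6}(v)$ is a smooth \emph{Fano} sixfold (proved by adjunction on the zero locus $Z_6(v)\subset Fl(2,6,V_8)$, after showing a certain torsion line bundle is trivial), hence rationally connected, hence admits no nonconstant map to an abelian variety or to a curve of positive genus. Concretely: (a) To exclude the ruling, your dimension count is wrong — a general point of $\cC$ lies on a \emph{two-dimensional} family of Hecke lines (the VMRT is the surface $\PP(E^\vee)$), and both $\cF_H$ and $\cF_R$ have dimension $7$, so nothing about "$6$-dimensional and covering" distinguishes the two families; your alternative appeal to "extra flag data incompatible with a fixed $\PP^3$" is not an argument. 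The paper instead notes that $\cF_R$ is dominated by the quadric bundle $G(2,\cU_4)$ over the abelian torsor $\cA$, whose fibers are only $4$-dimensional; a rationally connected $D$ contained in $\cF_R$ would have constant projection to $\cA$ and hence sit in a single $4$-dimensional fiber, contradicting $\dim D=6$. (b) For the final identification, you explicitly defer the construction of an inverse morphism via a universal bundle; the paper never builds one. It uses rational connectedness again to force the image of $D$ in $\cF_H$ to lie in a single fiber of $\eta:\cF_H\dashrightarrow C$ (this is also what pins down the single point $c$, an issue you leave open), and then invokes Ramanan's theorem that $\SU_C(2,\cO_C(c))$ has Picard rank one: a birational morphism from a smooth projective variety of Picard number one onto $D$ cannot contract anything, so it is an isomorphism. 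Your appeal to Zariski's main theorem presupposes the finiteness/bijectivity that Picard rank one is there to supply.

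So the gap is twofold and genuine: the exclusion of $\cF_R$ rests on an incorrect dimension comparison, and the upgrade from "birational" to "isomorphic" (together with the constancy of the jumping point $c$) is acknowledged but not achieved. Both are repaired in the paper by the single observation that $D$ is Fano, plus the Picard-rank-one input; without some substitute for these, the proposal does not close.
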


\begin{remark}
Such embeddings defined by Hecke lines are studied in \cite[section 3.4]{beauville}, and there is 
one, denoted $\varphi_p$ in loc. cit., for each choice of a point $p$ on the curve $C$. 
Here we only 
get one of these embeddings, in agreement with the already mentionned fact that $v$ does not only 
determine a genus three curve, but a marked point on this curve.
\end{remark}

\begin{remark}
An interesting consequence is that we know a minimal resolution of the structure sheaf of $\SU_C(2,\cO_C(c))$ inside the Grassmannian $G(2,V_8)$. From this resolution it is easy to check that 
the intersection with a general copy of $G(2,6)$ inside $G(2,V_8)$ is a K3 surface of genus $13$. 
This kind of description is used in \cite{km} to provide a new model for the general such K3 surface.
\end{remark}

\smallskip
Let us begin by showing that $D_{Z_6}(v)$ defines a six-dimensional family of Hecke lines.

\begin{prop}
\label{prop_hecke_in_coble}
Let $[U_2]\in D_{Z_6}(v)$, then $\PP(U_2)\subset \PP(V_8)$ is a line  in $\cC_4$.
\end{prop}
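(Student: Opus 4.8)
The plan is to show that the defining condition of $D_{Z_6}(v)$, namely the existence of $U_6\supset U_2$ with $v\in \wedge^3 V_8\wedge U_2+\wedge^4 U_6$, forces $\PP(U_2)$ to satisfy the membership condition for the Coble quartic established earlier, namely the existence of $U_4\supset U_1$ with $v\in (\wedge^2 U_4)\wedge(\wedge^2 V_8)+\wedge^3 V_8\wedge U_1$, for every line $U_1\subset U_2$. First I would fix $[U_2]\in D_{Z_6}(v)$ together with a witnessing $U_6$, choose an adapted basis $e_1,\dots,e_8$ of $V_8$ with $U_2=\langle e_1,e_2\rangle$ and $U_6=\langle e_1,\dots,e_6\rangle$, and write $v=e_1\wedge a+e_2\wedge b+v'$ with $a,b\in\wedge^3 V_8$ and $v'\in\wedge^4 U_6$. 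The goal is then, for an arbitrary point $[U_1]=[\lambda e_1+\mu e_2]\in\PP(U_2)$, to produce a four-dimensional $U_4\supset U_1$ certifying $[U_1]\in\cC$.

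The key step is to analyze $v'\in\wedge^4 U_6$. Working modulo $U_1$ inside $U_6$, $v'$ becomes a four-form in five variables, hence (being $\wedge^4$ of a five-space, which is a line) decomposable: after rechoosing the basis compatibly, $v'\equiv f_1\wedge f_2\wedge f_3\wedge f_4 \pmod{U_1}$ for suitable $f_i\in U_6$, so that $v'\in \wedge^2 \langle f_1,f_2\rangle\wedge\wedge^2 U_6 + \wedge^3 V_8\wedge U_1$, i.e.\ $v'\in (\wedge^2 U_4')\wedge(\wedge^2 V_8)+\wedge^3 V_8\wedge U_1$ for $U_4'=\langle f_1,f_2\rangle+U_1$ (a space of dimension at most $3$; one enlarges it to dimension $4$ freely). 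Since the remaining part of $v$, namely $e_1\wedge a+e_2\wedge b$, lies in $\wedge^3 V_8\wedge U_2 \subseteq \wedge^3 V_8\wedge U_1 + \wedge^3 V_8\wedge (U_2\cap?)$ — here one must be slightly careful, as $\wedge^3 V_8\wedge U_2$ is not contained in $\wedge^3 V_8\wedge U_1$ in general. This is the point where I would instead keep $U_2$ in play: write $v = \wedge^3 V_8\wedge U_2 \ni (\text{something}) + v'$, note $U_2\supset U_1$, and observe that $\wedge^3 V_8\wedge U_2 \subseteq \wedge^3 V_8 \wedge U_1 + (\text{terms involving } e_2)$. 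Absorbing the $e_2$-terms into $(\wedge^2 U_4)\wedge(\wedge^2 V_8)$ by taking $U_4\supseteq U_2$ handles this: so the cleanest route is to build $U_4$ containing all of $U_2$ plus one more vector coming from the decomposition of $v'$ modulo $U_2$, so that $v\in (\wedge^2 U_4)\wedge(\wedge^2 V_8)+\wedge^3 V_8\wedge U_1$ with $U_4\supseteq U_2\supseteq U_1$, which is the $\cC$-condition for $[U_1]$.

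Concretely, I would argue: reduce $v$ modulo $U_2$, getting $\bar v=\bar v'\in\wedge^4(U_6/U_2)$, a four-form in a four-dimensional space, hence $\bar v'=c_1\wedge c_2\wedge c_3\wedge c_4$ for a basis $c_i$ of $U_6/U_2$ (or $\bar v'=0$, an even easier case). Lift $c_1,c_2$ to $\tilde c_1,\tilde c_2\in U_6$ and set $U_4:=U_2+\langle\tilde c_1,\tilde c_2\rangle\subset U_6$, of dimension $4$. Then $v'\in \wedge^2\langle\tilde c_1,\tilde c_2\rangle\wedge\wedge^2 U_6 + \wedge^3 V_8\wedge U_2 \subseteq (\wedge^2 U_4)\wedge(\wedge^2 V_8)+\wedge^3 V_8\wedge U_2$, and the remaining $e_1\wedge a+e_2\wedge b\in\wedge^3 V_8\wedge U_2$. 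Since $U_2\subset U_4$, we get $\wedge^3 V_8\wedge U_2\subset \wedge^3 V_8 \wedge U_1 + (\wedge^2 U_4)\wedge(\wedge^2 V_8)$ — because any $\omega\wedge u$ with $u\in U_2$, $\omega\in\wedge^3 V_8$, can be split along $u=\lambda u_1 + u'$ with $u_1$ a generator of $U_1$ and $u'\in U_2\subset U_4$, and $\omega\wedge u'$, after moving one more factor of $\omega$ that may or may not lie in $U_4$, is visibly in $(\wedge^2 U_4)\wedge(\wedge^2 V_8)$ only if two factors lie in $U_4$; to guarantee that I would instead note directly that $\wedge^3 V_8 \wedge U_2 \subset \wedge^2 U_2\wedge\wedge^2 V_8 + \wedge^3 V_8\wedge U_1$ (split the three $V_8$-factors to extract a second $U_2$-factor, modulo $U_1$), and $\wedge^2 U_2\wedge\wedge^2 V_8\subseteq(\wedge^2 U_4)\wedge(\wedge^2 V_8)$. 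Hence $v\in(\wedge^2 U_4)\wedge(\wedge^2 V_8)+\wedge^3 V_8\wedge U_1$ with $U_4\supset U_1$, so $[U_1]\in\cC$ for every $[U_1]\in\PP(U_2)$, i.e.\ $\PP(U_2)\subset\cC$, and being a $\PP^1$ it is a line.

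The main obstacle is the bookkeeping in the last paragraph: the containments between the various subspaces $\wedge^3 V_8\wedge U_2$, $\wedge^2 U_2\wedge\wedge^2 V_8$, $(\wedge^2 U_4)\wedge(\wedge^2 V_8)$ and $\wedge^3 V_8\wedge U_1$ are elementary multilinear algebra but must be set up so that every term of $v$ lands in the target space $(\wedge^2 U_4)\wedge(\wedge^2 V_8)+\wedge^3 V_8\wedge U_1$ simultaneously, with a single $U_4$ that works for all choices of $U_1\subset U_2$ (or one checks it pointwise, which suffices for the set-theoretic containment). The identity $\wedge^3 V_8\wedge U_2 \subseteq \wedge^2 U_2\wedge\wedge^2 V_8 + \wedge^3 V_8\wedge U_1$ for any line $U_1\subset U_2$ — whose proof is: pick a complement so $V_8 = U_1\oplus W$, expand, and any elementary $w_1\wedge w_2\wedge w_3\wedge u$ with $u\in U_2$ either already has $u\in U_1$ (second summand) or can have $u$ paired with the $U_2$-component of some $w_i$ — is the one computation I would present carefully; everything else is routine.
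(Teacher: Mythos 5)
There is a genuine gap, and it sits exactly at the step you yourself flag as delicate: the claimed containment $\wedge^3 V_8\wedge U_2\subseteq(\wedge^2 U_2)\wedge(\wedge^2 V_8)+\wedge^3 V_8\wedge U_1$ is false. In an adapted basis with $U_1=\langle e_1\rangle$ and $U_2=\langle e_1,e_2\rangle$, one has $(\wedge^2 U_2)\wedge(\wedge^2 V_8)\subset \wedge^3 V_8\wedge U_1$, so the right-hand side is just $\wedge^3 V_8\wedge U_1$, spanned by the $35$ basis tensors $e_I$ with $1\in I$; but the left-hand side has dimension $55$ and contains, e.g., $e_2\wedge e_3\wedge e_4\wedge e_5$. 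Your sketch of a proof ("pair $u$ with the $U_2$-component of some $w_i$") breaks down precisely when the $w_i$ have no $U_2$-component: then $w_1\wedge w_2\wedge w_3\wedge u_2$ has a single factor in $U_2$ and none in $U_1$. Concretely, in your decomposition $v=e_1\wedge a+e_2\wedge b+v'$ with $U_1=\langle e_1\rangle$, the term $e_2\wedge b$ lies in $(\wedge^2 U_4)\wedge(\wedge^2 V_8)+\wedge^3 V_8\wedge U_1$ only if the trivector $b$ acquires a factor in $U_4$ modulo $U_2$; your $U_4$ is chosen solely from the decomposable form of $v'\in\wedge^4 U_6$ and gives no control whatsoever over $b$. (The rest of your setup is fine: reducing modulo $U_1$ does make the $\wedge^4 U_6$ part a decomposable $a\wedge b\wedge c\wedge d$, exactly as in the paper.)

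The repair requires choosing $U_4$ adapted \emph{simultaneously} to both summands of $v$, which is what the paper's proof does. Modulo $U_1$ the piece in $\wedge^3 V_8\wedge U_2$ collapses to $u_2\wedge v'$ for a single trivector $v'$ (because $U_2/U_1$ is a line), and modulo $U_2$ one writes $v'$ generically as a sum of two decomposable trivectors $e\wedge f\wedge g+h\wedge i\wedge l$, using that the secant variety of $G(3,6)$ fills the ambient space. The key point you are missing is the interaction: the $4$-plane $\langle a,b,c,d\rangle$ meets each of the $3$-planes $\langle e,f,g\rangle$ and $\langle h,i,l\rangle$ modulo $U_2$, so one may arrange $a=e$ and $b=h$, and then $U_4=\langle U_2,a,b\rangle$ places two factors of \emph{every} summand of $v\bmod U_1$ inside $U_4$ at once. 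Without coupling the choice of $U_4$ to the trivector $v'$ in this way, the argument cannot close.
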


\begin{proof}
Let $[U_1]\in \PP(U_2)$ be a point in the line. By definition of $D_{Z_6}(v)$, one can write $$ (v  \mod U_1)=u_2\wedge v'+ a\wedge b\wedge c\wedge d$$ for some $u_2\in U_2$, some trivector $v'$ and some vectors $a,b,c,d$. The trivector $v'$ is a trivector in six variables, therefore it can in general be written as $e\wedge f\wedge g + h\wedge i\wedge l$ for some vectors $e,f,g,h,i,l$, since the secant variety of $G(3,6)$ fills the full Plücker space. Now, modulo $U_2$, $\dim(\langle a,b,c,d \rangle \cap \langle e,f,g \rangle)\geq 1$ and $\dim(\langle a,b,c,d \rangle \cap \langle h,i,l \rangle)\geq 1$. Thus we can suppose that $a=e$ and $b=h$. But then if we let  $U_4=\langle U_2,a,b \rangle,$  it is straightforward to check that $( v \mod U_1) \in (\wedge^2 U_4)\bigwedge (\wedge^2 V_8)$. This ensures that $[U_1]$ belongs to $\cC_4$. 
\end{proof}

The point-line incidence variety  of the family of lines parametrized by $D_{Z_6}(v)$ is given by the projective bundle $\PP(\cU_2)\to D_{Z_6}(v)$.

\begin{prop}
The family of lines parametrized by $D_{Z_6}(v)$ covers $\cC_4$.
\end{prop}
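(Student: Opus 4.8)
The plan is to show that the family of lines parametrized by $D=D_{Z_6}(v)$, with incidence variety $\PP(\cU_2)\to D$, dominates $\cC_4=\cC$ via the evaluation map $\PP(\cU_2)\to \PP(V_8)$. Since $\PP(\cU_2)$ has dimension $6+1=7$ and $\cC$ is a hypersurface in $\PP^7$, hence also of dimension $7$, it suffices to prove that this map is generically finite, equivalently that the image has dimension $7$, equivalently that the degree of the relative $\cO(1)$-class on $\PP(\cU_2)$ is nonzero. This is exactly the same kind of Chern-class computation that was carried out for the ruling in Lemma \ref{U4covers}.

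Concretely, I would realize $D$ as the zero locus in $Fl(2,6,V_8)$ of the generic section $\bar v$ of the rank $14$ bundle $\cP=\wedge^4V_8/(\wedge^3V_8\wedge\cU_2+\wedge^4\cU_6)$, as in the proof that $D$ is a smooth Fano sixfold (recall that the projection $Z_6(v)\to D$ is an isomorphism). Then the class of $D$ in the Chow ring of $Fl(2,6,V_8)$ is $c_{14}(\cP)$, and the degree I want to evaluate is
\[
\int_{\PP(\cU_2)} c_1(\cU_1^\vee)^{7} = \int_{D} s_{6}(\cU_2^\vee) = \int_{Fl(2,6,V_8)} c_{14}(\cP)\, s_{6}(\cU_2^\vee),
\]
where $s_j$ denotes the $j$-th Segre class; here $\dim D=6$ and $c_1(\cU_1^\vee)$ restricted to a line $\PP(U_2)$ is the hyperplane class, so the top self-intersection on each fiber pairs with the Segre class of $\cU_2$ on the base. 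This integral can be computed with the \texttt{Schubert2} package of \cite{Macaulay2}, just as in Lemma \ref{U4covers}; I expect it to come out to a positive integer. Combined with Proposition \ref{prop_hecke_in_coble}, which guarantees that the image of $\PP(\cU_2)$ lands inside $\cC$, a nonzero answer forces the image to be all of $\cC$, since $\cC$ is irreducible of dimension $7$.

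The only genuine point requiring care is bookkeeping: one must make sure the dimension count is right (the fibers of $\PP(\cU_2)\to D$ are the lines $\PP(U_2)$ themselves, so $\dim\PP(\cU_2)=7$ matches $\dim\cC$) and that the Segre/Chern class substitution into the flag bundle is set up correctly, since $\cU_2$ and $\cU_6$ both appear in $\cP$. There is no conceptual obstacle beyond this: the main work, and the only thing that could in principle go wrong, is that the Macaulay2 computation could return zero — but the heuristic count from the degree of $\cC$, exactly parallel to the Remark following Lemma \ref{U4covers}, predicts a nonzero value (indeed one can cross-check it against the known number of Hecke lines through a general point of $\SU_C(2)$). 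Once the integral is confirmed nonzero, the proof is complete.
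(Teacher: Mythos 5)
There is a genuine gap here, and it is a dimension error that undermines the whole strategy. The Coble quartic $\cC_4$ is a hypersurface in $\PP(V_8)=\PP^7$, so it has dimension $6$, not $7$ (equivalently, $\SU_C(2)$ has dimension $3g-3=6$). Your incidence variety $\PP(\cU_2)$ does have dimension $7$, so the evaluation map $\PP(\cU_2)\to\PP(V_8)$ can never be generically finite onto its image: since by Proposition \ref{prop_hecke_in_coble} that image lies in the $6$-dimensional $\cC_4$, the general fiber is at least one-dimensional (indeed the lines of $D_{Z_6}(v)$ through a general point of $\cC_4$ sweep out a curve in the VMRT $\PP(E^\vee)$). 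Consequently the integral you propose is not merely ``expected to be positive'' --- it is identically zero: $c_1(\cU_1^\vee)$ is the pullback of the hyperplane class under a morphism whose image has dimension at most $6$, so the pushforward of its seventh power vanishes. The Macaulay2 computation as you have set it up would return $0$ and prove nothing, and your proposed cross-check against ``the number of Hecke lines through a general point'' also fails, because for this six-dimensional sub-family that number is infinite.

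The repair is small but essential, and it is exactly what the paper does: one checks instead that $c_1(\cU_1^\vee)^6\neq 0$ on $\PP(\cU_2)$ (for instance by pairing this curve class with an ample divisor pulled back from $D_{Z_6}(v)$, i.e.\ working with $s_5$ of $\cU_2$ on the base rather than $s_6$). Nonvanishing of the sixth power forces the image of $\PP(\cU_2)$ in $\PP(V_8)$ to have dimension at least $6$; being contained in the irreducible $6$-dimensional $\cC_4$, it must then equal $\cC_4$. The rest of your setup --- realizing $D_{Z_6}(v)$ as the zero locus of a section of a bundle on a flag variety and converting the fiber integral into a Segre class computation as in Lemma \ref{U4covers} --- is sound once the exponent, and the logic it serves, are corrected.
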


\begin{proof}
This is again a Chern class computation. Indeed, by irreducibility of the varieties in play, it is sufficient to check that, if $\cU_1^\vee$ denotes the relative dual tautological line bundle of $\PP(\cU_2)\to D_{Z_6}(v)$, then $c_1(\cU_1^\vee)^6\neq 0$. This implies that the image of $\PP(\cU_2)$ inside $\PP(V_8)$ has dimension at least six, and is thus the Coble quartic $\cC_4$ by Proposition \ref{prop_hecke_in_coble}. Notice that one can work directly on $Z_6(v)$, since it is isomorphic to $D_{Z_6}(v)$. Since $Z_6(v)$ can be constructed as the zero locus of a section of a vector bundle inside the flag variety $Fl(2,4,V_8)$, we can verify that $c_1(\cU_1^\vee)^6\neq 0$  with \cite{Macaulay2} by constructing the coordinate ring of the zero locus $Z_6(v)$ and of the projective bundle $\PP(\cU_2)$ over it, similarly to what we did in the proof of Lemma \ref{U4covers}.
\end{proof}

\begin{prop}
The lines parametrized by $D_{Z_6}(v)$ are Hecke lines.
\end{prop}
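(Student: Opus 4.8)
The plan is to combine the two previous propositions with the dimension count and the uniqueness properties recalled in Section 2. We already know from Proposition \ref{prop_hecke_in_coble} that every line $\PP(U_2)$ with $[U_2]\in D_{Z_6}(v)$ lies on the Coble quartic $\cC$, and from the previous proposition that these lines cover $\cC$. By the results of \cite{mustopa-teixidor} recalled at the beginning of Section 2, the covering lines of $\cC$ come in exactly two families, $\cF_H$ (Hecke lines) and $\cF_R$ (lines in the ruling), each of dimension six; since $D_{Z_6}(v)$ is an irreducible sixfold mapping to $G(2,V_8)$ with image a covering family of lines, its image must be contained in $\cF_H\cup\cF_R$, hence (by irreducibility) in one of the two. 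So the whole point is to \emph{exclude} that the generic line $\PP(U_2)$ is a line in the ruling.

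The cleanest way to do this is via the VMRT dichotomy of Section 2: by Proposition \ref{vmrt-ruling} the variety of lines in the ruling through a general point of $\SU_C(2)$ is a disjoint union of eight planes in $\PP^5$, whereas by the proposition on Hecke VMRT it is the image of $\PP(E^\vee)$ under $|\cO_E(2)|$, an irreducible ruled surface containing no line. These are not isomorphic (in particular the Hecke VMRT is connected, the ruling VMRT is not). Thus it suffices to show that the family of lines cut out by $D_{Z_6}(v)$ has \emph{connected} (equivalently, irreducible) VMRT at a general point, or — even more simply — that the fibre of $\PP(\cU_2)\to\cC$ over a general point is irreducible. I would argue this by examining the incidence $\PP(\cU_2)\to D_{Z_6}(v)$: a general point $[U_1]\in\cC$ sits on the lines $\PP(U_2)$ for those $U_2\supset U_1$ such that the rank-$\le 2$ condition on $(v\bmod U_1)$ holds, and this is a Pfaffian-type condition on a $\PP^6=\PP(V_8/U_1)$ that should cut out an irreducible surface, matching the expected Hecke VMRT. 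Alternatively, one can simply note that the generic line in the ruling is a \emph{bisecant to $C_L$} inside some $\PP_L$ (by \cite[Theorem 1.3]{opp} a line in $\PP_L$ is Hecke iff it meets $C_L$, so the generic ruling line is non-Hecke and lies in a $\PP^3\subset\cC$); but Proposition \ref{prop_hecke_in_coble}'s construction produces, for the generic $[U_2]\in D_{Z_6}(v)$, a \emph{four-space} $U_4\supset U_2$ only after choosing $[U_1]\in\PP(U_2)$, and the resulting $\PP(U_4)$ varies with $U_1$ — so the line $\PP(U_2)$ is not contained in a fixed $\PP^3$ of $\cC$, hence is not a ruling line. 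This last observation, once made precise, directly forces $D_{Z_6}(v)\subset\cF_H$, and then equality of the two irreducible sixfolds follows since both are covering families.

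I expect the main obstacle to be making the ``the $\PP(U_4)$ genuinely varies'' argument airtight: one must check that for generic $[U_2]\in D_{Z_6}(v)$, as $[U_1]$ ranges over $\PP(U_2)$ the four-spaces $U_4$ produced in the proof of Proposition \ref{prop_hecke_in_coble} are not all contained in a single $U_4'$ (which would happen exactly if $\PP(U_2)$ lay in a $\PP^3$ of the ruling). This is a genericity/dimension statement about the relative Pfaffian locus, and it can be settled either by the Chern-class/VMRT comparison sketched above or by an explicit check on the Cartan normal form of $v$ using \cite{Macaulay2}, as was done for the covering statements. Once the line is known to be Hecke, the identification $D_{Z_6}(v)\simeq\SU_C(2,\cO_C(c))$ of Theorem \ref{thm_main} is immediate from Remark \ref{hecke-fibers}: the birational morphism $\mathrm{U}_C(2,1)^{\mathrm{eff}}\to\cF_H$ restricts, after fixing the marked point $c$ determined by $v$, to the desired isomorphism with the smooth sixfold $D_{Z_6}(v)$.
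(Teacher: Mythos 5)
Your overall strategy (use the covering property plus the Mustopa--Teixidor dichotomy to reduce to excluding the case $D_{Z_6}(v)\subset\cF_R$) is exactly the paper's first step, but the crucial exclusion is where your proposal has a genuine gap: neither of your two sketches, as stated, closes the argument. The VMRT route does not work as you describe it, because the relevant object is the VMRT of the six-dimensional \emph{subfamily} $D_{Z_6}(v)$ at a general point of $\cC$, which is only a curve; a connected curve can perfectly well sit inside one of the eight disjoint planes of Proposition \ref{vmrt-ruling}, so connectedness of the sub-VMRT does not distinguish Hecke from ruling. Your second route conflates two different four-spaces: the $U_4\supset U_1$ produced in the proof of Proposition \ref{prop_hecke_in_coble} witnesses membership of $[U_1]$ in the quartic $D_{Y_1}(v)$, and is in general \emph{not} one of the $\PP^3$'s of the ruling (compare the proof that $\PP(\cU_4)$ lies in $\cC$, where the witnessing space $U_4'=\langle e_1,e_2,a,d\rangle$ differs from the ruling space $U_4$). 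So the fact that this witnessing $U_4$ varies with $U_1$ does not show that $\PP(U_2)$ fails to lie in a fixed $\PP_L$. You correctly flag this step as the main obstacle, but deferring it to ``a genericity/dimension statement'' or a Macaulay2 check leaves the proof incomplete.

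The paper's actual argument is short and of a different flavour: it uses the Lemma that $D_{Z_6}(v)$ is a smooth \emph{Fano} sixfold, hence rationally connected. If $D_{Z_6}(v)\subset\cF_R$, its preimage in the quadric bundle $G(2,\cU_4)$ over $\cA$ would be rationally connected, so its projection to the abelian threefold (torsor) $\cA$ would be constant; but the fibers of $G(2,\cU_4)\to\cA$ are only four-dimensional, contradicting $\dim D_{Z_6}(v)=6$. You never invoke the Fano property or the abelian base of the ruling, which is precisely the input that makes the exclusion painless. (A separate, minor point: your closing claim that ``equality of the two irreducible sixfolds follows since both are covering families'' is off --- $\cF_H$ is seven-dimensional, and $D_{Z_6}(v)$ is a proper subvariety of it, namely a fiber over a point of $C$; this is handled in the proof of Theorem \ref{thm_main}, not in this proposition.)
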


\proof 
Suppose by contradiction 
that the lines parametrized by  $D_{Z_6}(v)$ are not Hecke. Since they form a covering family, they must be lines in the ruling, i.e. $D_{Z_6}(v)\subset \cF_R$. Now recall 
that $\cF_R$ is a birational image of the quadric bundle $G(2,\cU_4)$ over $\cA$.  The pre-image of $D_{Z_6}(v)$ in $G(2,\cU_4)$ is rationally connected, being birationally equivalent to the Fano manifold $D_{Z_6}(v)$. But then its projection to $\cA$ must be constant. Since the fibers of this projection are only four-dimensional, while the
dimension of  $D_{Z_6}(v)$ is six, we get a contradiction.
\qed 

\medskip\noindent {\it Proof of Theorem \ref{thm_main}.}
Recall that the family $\cF_H$ of Hecke lines has dimension seven, 
so  $D_{Z_6}(v)$ cannot 
be the whole family. In fact $\cF_H$ has a rational map $\eta$ to $C$, and by the same argument as above, the fact that $D_{Z_6}(v)$ is Fano ensures that its image in $\cF_H$ 
is contained in a fiber of $\eta$, over some point $c\in C$. But then the morphism
from $\SU_C(2,\cO_C(c))$ to $\cF_H$ is birational onto its image $D_{Z_6}(v)$.
Since $\SU_C(2,\cO_C(c))$ has Picard rank one \cite{ramanan}, this morphism must be an
isomorphism. \qed

\section{A Coble type quadric  hypersurface}

The aim of this section is to show that the Coble quadric hypersurface in $G(2,V_8)$ deserves its name, in the sense that it is singular along the moduli space and it is uniquely determined by this property. So the section is mainly devoted to the proof of Theorem \ref{Coble_G(2,8)}. In the last part we also prove a self-duality statement concerning this hypersurface which is analogous to the self-duality of the Coble quartic 
in $\PP(V_8)$.

\subsection{The relative Pfaffian}
As we have seen, the fact that $D_{Z_6}(v)$ is defined as a Pfaffian locus 
in $G(2,V_8)$ implies that it is the singular locus of a Pfaffian hypersurface,
defined as the first degeneracy locus $D_{Z_1}(v)$ of the skew-symmetric morphism 
$\cQ\lra \cQ^\vee(1)$ defined by $v$. 

\begin{theorem}\label{Coble_G(2,8)}
The hypersurface $D_{Z_1}(v)$ of $G(2,V_8)$ is a quadratic section of the Grassmannian.
It is the unique quadratic section that is singular along  $D_{Z_6}(v)$.
\end{theorem}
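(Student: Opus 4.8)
The plan is to split the statement into two parts: first, that $D_{Z_1}(v)$ is a quadratic section of $G=G(2,V_8)$ and is singular along $D=D_{Z_6}(v)$; second, the uniqueness. The first part follows from the general theory of orbital degeneracy loci together with the structure of the Pfaffian cubic $Z_1\subset\wedge^2V_6$. Indeed, $D_{Z_1}(v)$ is the locus where the skew-symmetric morphism $\psi_v:\cQ\to\cQ^\vee(1)$ drops rank, i.e.\ where the Pfaffian of $\psi_v$ vanishes. Since $Z_1$ is a cubic in $\wedge^2V_6\simeq\wedge^4V_6^\vee$, and since on $G(2,V_8)$ the relevant bundle of two-forms in six variables is $\wedge^2\cQ^\vee(1)$ with $\det\cQ\simeq\cO(1)$, a Chern-class/weight bookkeeping shows that the Pfaffian of $\psi_v$ is a section of $\cO_G(2)$, hence $D_{Z_1}(v)$ is cut out on $G$ by a single quadric in the Plücker embedding. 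That $D$ is contained in the singular locus of $D_{Z_1}(v)$ is the standard fact that the rank-$\le 4$ locus of a family of skew forms sits inside the singular locus of the rank-$\le 6$ (Pfaffian) locus; since $\dim D=6$ and $D_{Z_1}(v)$ is an irreducible hypersurface in the $12$-fold $G$, and (by Theorem \ref{thm_main}) $D$ is smooth of the expected codimension, one checks the singular locus is exactly $D$ by a local computation of the Pfaffian near a point of $D$, exactly as in the classical Pfaffian picture.

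For uniqueness, the key is to translate ``quadratic section of $G$ singular along $D$'' into a statement about the square of an ideal, and this is where the bulk of the work lies. Concretely, a quadric section of $G$ is an element of $H^0(G,\cO_G(2))$, and requiring it to be singular along $D$ means its image in $H^0(D,\mathcal{N}_{D/G}^\vee(2))$ — the conormal-twisted restriction governing second-order vanishing along $D$ — is zero; equivalently, the quadric must lie in $H^0(G,\cI_D^2(2))$ modulo the relations. By Lemma \ref{normal} we have $\mathcal{N}_{D/G}^\vee\simeq\mathcal{N}_{D/G}(-2)\simeq\wedge^2\mathcal{K}er(\psi_v)^\vee(-1)$, so the obstruction space is $H^0(D,\wedge^2\mathcal{K}er(\psi_v)^\vee(1))$, which one computes using the minimal resolution of $\cO_D$ inside $G$ coming from \eqref{esker}. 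The strategy is: (i) use the Koszul/Eagon–Northcott-type resolution attached to the relative Pfaffian \eqref{esker} to resolve $\cI_D$ and $\cI_D^2$ as $\cO_G$-modules; (ii) twist by $\cO_G(2)$ and take cohomology, reducing via Bott's theorem on $G(2,V_8)$ to an explicit (finite, machine-checkable) computation; (iii) conclude that $H^0(G,\cI_D^2(2))$ is one-dimensional, necessarily spanned by the Pfaffian quadric $D_{Z_1}(v)$.

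There is a subtlety: the resolution in question is the \emph{relative} version of the resolution of the square of the ideal of $G(2,V_6)$ in its Plücker embedding, and establishing its shape requires understanding $I_{G(2,6)}^2$ as a module over the homogeneous coordinate ring — this is precisely the step flagged in the introduction (``study the square of the ideal of the Grassmannian $G(2,6)$ in its Plücker embedding''). So the concrete steps are: (a) determine a minimal free resolution of $I_{G(2,6)}^2$ over the polynomial ring on $\wedge^2V_6$, e.g.\ by identifying the $\GL(V_6)$-representations appearing (the degree-two piece of $I_{G(2,6)}$ is $\wedge^4V_6$, and the degree-three piece of $I_{G(2,6)}^2$ — i.e.\ the degree-one part of $(I^2)(3)$ — is what must be shown to contribute nothing new); (b) relativize over $Fl(2,6,V_8)$ using the ODL formalism of \cite{bfmt}, so that the linear syzygies and the bundle $\cP$ from the proof of the lemma above enter; (c) feed the resulting Bott computation into \cite{Macaulay2} to pin down $h^0(G,\cI_D^2(2))=1$.

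The main obstacle I expect is step (a): controlling $I_{G(2,6)}^2$ precisely enough — in particular ruling out any ``extra'' quadric on $G(2,V_8)$ that vanishes to order two along $D$ but does not come from the Pfaffian. One must show there is no room for a quadric that is singular along $D$ for reasons of Plücker relations alone (i.e.\ coming from $\cI_G^2$ rather than genuinely from $\cI_D^2$), and this requires knowing that the second symbolic power and the ordinary square of the ideal of $G(2,6)$ agree in the relevant low degree. Once the representation theory of $I_{G(2,6)}^{(2)}=I_{G(2,6)}^2$ in degree three is settled, the relativization and the final cohomology count are bookkeeping that the Schubert2 package handles; but that initial ideal-theoretic input is the crux, and it is exactly the piece the introduction singles out as the reason a separate subsection is devoted to it.
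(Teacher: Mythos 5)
Your proposal follows essentially the same route as the paper: the Pfaffian of $\psi_v$ gives the quadric via $S^3(\wedge^2\cQ^\vee(1))\to\cO_G(2)$, uniqueness is reduced to showing $h^0(G,\cI_D^2(2))=1$, and the crux is indeed the relativized resolution of the square of the Pfaffian ideal of the cone over $G(2,V_6)$ combined with Bott--Borel--Weil. The paper implements the step you flag as the main obstacle by resolving the quotient module $M=I^2/S_+I_P$ (rather than $I^2$ itself) and closes the count via $h^0(\cI_D(2))=71$, $h^0(\cN_{D/G})\ge 70$ and $H^1(G,\cI_D^2(2))=0$; these are presentational variants of the computation you outline.
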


\begin{remark}
Starting from a genus three curve $C$ and its Kummer threefold embedded in $\PP^7$ by the 
linear system $|2\Theta|$, the original observation 
of Coble was that there exists a unique Heisenberg-invariant quartic $\cC$ that is singular 
along the Kummer. Beauville proved much later that the Heisenberg-invariance hypothesis was 
actually not necessary \cite{beauville-coble}. In our context the curve and its 
Heisenberg group are not easily available (although there are connections between the 
latter and the  Weyl group $W(E_7)$ of the theta-representation $\wedge^4V_8$), so we do not use any Heisenberg-invariance hypothesis. 
\end{remark}

\subsubsection{Structure of the proof of Theorem \ref{Coble_G(2,8)}} Let us write $D$ for $D_{Z_6}(v)$ and $G$ for $G(2,V_8)$, for simplicity. 
That $D_{Z_1}(v)$ is a quadratic section of $G$ follows from the fact that 
it is defined by a rank six Pfaffian, obtained as the image of $v$ by the 
cubic morphism 
$$S^3(\wedge^2\cQ^\vee(1))\ra \wedge^6\cQ^\vee(3)= \cO_G(2).$$ 

In order to prove that this is the only quadratic section that is singular along $D$, 
recall that the conormal bundle
of $D$ in the Grassmannian $G$ is the quotient of the ideal sheaf $\cI_D$
by its square  $\cI^2_D$. Twisting by $\cO_G(2)$ and taking cohomology, we get an 
exact sequence 
$$0\lra H^0(G,\cI^2_D(2))\lra H^0(G,\cI_D(2))\lra H^0(D,\cN_{D/G}^\vee(2))\lra H^1(G,\cI^2_D(2)).$$
Observe that $H^0(G,\cI_D(2))$ parametrizes quadratic sections of $G$ (up to scalar)
that contain $D$, while, since $D$ is smooth, 
$H^0(G,\cI^2_D(2))$ parametrizes quadratic sections that are
singular along $D$. Our claim is that the latter space is one-dimensional. This will 
be proved in three steps: first, compute the dimension of the space of quadrics 
containing $D$; second, bound $H^0(D,\cN_{D/G}^\vee(2))$ from below; third, prove that 
$H^1(G,\cI^2_D(2))$ vanishes. These results are contained in Lemmas \ref{lem_I}, \ref{lem_N}, \ref{lem_II}. From the fact that $H^1(\cI_D^2(2))=0$, 
the exact sequence 
$$0\lra H^0(\cI_D^2(2))\lra H^0(\cI_D(2)) \lra H^0(\cN_{D/G}^\vee(2))= H^0(\cN_{D/G})\lra 0,$$
knowing that $h^0(\cI_D(2))=71$ and  $h^0(\cN_{D/G})\ge 70$, will allow us to 
conclude that $h^0(\cI_D^2(2))\le 1$ and the proof will be complete. 
\subsubsection{Quadrics containing the moduli space}
Let us count the quadric sections of $G=G(2,V_8)$ that contain the moduli space 
$D\simeq \SU_C(2,L)$.

\begin{lemma}
\label{lem_I}
$h^0(G,\cI_D(2))=71.$
\end{lemma}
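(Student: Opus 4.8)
The plan is to read $h^0(G,\cI_D(2))$ off a locally free resolution of the ideal sheaf $\cI_D$ on $G=G(2,V_8)$ coming from the relative Pfaffian picture. Recall that $D=D_{Z_6}(v)$ is the rank-$\le 2$ degeneracy locus of the skew morphism $\psi_v\colon\cQ\to\cQ^\vee(1)$; fibrewise, writing $W$ for a fibre of $\cQ$, it is the locus where $v$ reduced modulo $\wedge^3V_8\wedge\cU$ lands in the affine cone over $G(4,W)\cong G(2,W^\vee)\subset\PP(\wedge^4W)$. For generic $v$ this degeneracy locus has the expected codimension $6$ and is smooth, so (by generic perfection of the Pfaffian ideal) $\cI_D$ is resolved on $G$ by relativizing the minimal free resolution of the Plücker coordinate ring of $G(2,6)\subset\PP^{14}$ — an arithmetically Gorenstein resolution of length $6$ whose terms are sums of bundles $S_\lambda\cQ\otimes\cO_G(-b)$. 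The first term is $\cF_1=\wedge^4\cQ(-2)\twoheadrightarrow\cI_D$ (the fifteen relativized sub-Pfaffians of $\psi_v$, i.e. the components of $\psi_v\wedge\psi_v\in H^0(G,\wedge^2\cQ(1))$), and the later terms carry strictly more negative $\cO_G$-twists.

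First I would record that $H^0(G,\cO_G(2))$ is the irreducible $\SL(V_8)$-module $S_{(2,2,0,\dots,0)}V_8^\vee$, of dimension $336$, so that $H^0(G,\cI_D(2))$ is a subspace of a $336$-dimensional space. Then I would twist the resolution of $\cI_D$ by $\cO_G(2)$ and run the associated hypercohomology spectral sequence. After the twist the leading term is $\cF_1(2)=\wedge^4\cQ$, with $H^0(G,\wedge^4\cQ)=\wedge^4V_8$ of dimension $70$ and no higher cohomology (Bott); all later terms become $S_\lambda\cQ\otimes\cO_G(-a)$ with $a\ge1$, whose $H^0$ vanishes by a Bott computation on $G(2,V_8)$. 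The remaining point is to show that among these later terms exactly one contributes a one-dimensional higher cohomology group that survives to $H^0(\cI_D(2))$; granting this, the spectral sequence yields $h^0(G,\cI_D(2))=70+1=71$.

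The step I expect to be the main obstacle is precisely this last cohomological bookkeeping: one must pin down which Schur functors $S_\lambda\cQ$ and which $\cO_G$-twists occur in each homological degree of the relativized $G(2,6)$-resolution, and then, after the $\cO_G(2)$-twist, apply Bott's theorem summand by summand on $G(2,V_8)$. Since some of the resulting weights lie on walls of the dominant chamber, the answer is not a plain Euler characteristic — one genuinely has to locate the unique non-vanishing $H^{\ge1}$ and verify that it is not killed by a spectral sequence differential. In practice these finitely many Bott computations can be cross-checked with \cite{Macaulay2} by writing down the Plücker ideal of $Z_6(v)$ for a random $v$ over a finite field and intersecting it with the degree-two part of the Plücker coordinate ring.
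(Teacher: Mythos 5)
Your strategy is exactly the one the paper uses: relativize the Józefiak--Pragacz--Weyman resolution of the Pfaffian ideal of the cone over $G(2,V_6)$ to a locally free resolution of $\cI_D(2)$ on $G=G(2,V_8)$ with terms $S_\lambda\cQ\otimes\cO_G(-a)$, note that the leading term $\wedge^4\cQ$ contributes $H^0=\wedge^4V_8$ of dimension $70$, and extract $h^0(G,\cI_D(2))$ from the hypercohomology spectral sequence. However, as written your argument has a genuine gap, and it sits exactly where the content of the lemma lies: you assert that ``exactly one of the later terms contributes a one-dimensional higher cohomology group that survives,'' and then conclude $70+1=71$ by \emph{granting} this. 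That assertion is not a routine consequence of anything you have established -- a priori the later terms could contribute nothing (giving $70$) or more than one dimension (giving $>71$), and no general principle tells you the contribution is $\CC$. Deferring it to ``cohomological bookkeeping'' or to a Macaulay2 experiment is not a proof; the bookkeeping \emph{is} the lemma.

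For comparison, here is what the paper actually finds when it carries out that step. Writing out the relativized resolution of $\cI_D(2)$ explicitly, its terms are $\wedge^4\cQ$, $\fsl(\cQ)$, $S^2\cQ^\vee(-1)\oplus S^2\cQ(-1)$, $\fsl(\cQ)(-2)$, $\wedge^2\cQ(-3)$, $\cO_G(-4)$ (self-dual up to twist). Bott--Borel--Weil shows every term is acyclic except two: $\wedge^4\cQ$ in homological degree $0$, with $H^0=\wedge^4V_8$, and $S^2\cQ^\vee(-1)$ in homological degree $2$, which is one of the two irreducible summands of $\Omega^2_G$ and hence has exactly $H^2=\CC$ and nothing else. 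With only these two non-acyclic terms there is no room for any nonzero spectral sequence differential (so the ``survival'' issue you flag is automatic here), and one gets the exact sequence $0\to\wedge^4V_8\to H^0(G,\cI_D(2))\to\CC\to 0$, whence $h^0=71$. To repair your write-up you would need to list the Schur functors and twists in each homological degree and run Bott on each summand; until that is done, the value $71$ is assumed rather than proved. (A small side remark: not all later terms are of the form $S_\lambda\cQ\otimes\cO_G(-a)$ with $a\ge 1$ in the naive sense -- the first syzygy term is $\fsl(\cQ)$ -- though its $H^0$ does vanish, so this does not affect the outcome.)
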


\proof Let us first recall the classical minimal 
resolution of the ideal $I$ generated by submaximal Pfaffians of a generic skew-symmetric matrix of size $6$; in other words, the ideal of the 
cone over the Grassmannian $G(2,V_6)$ inside $\wedge^2V_6$. Letting
$S=\CC[\wedge^2V_6]$, this resolution is the following \cite{jpw}:

$$\xymatrix@-2.5ex{
 & 0 & \\ & I\ar[u] & \\ &  \wedge^4V_6^\vee\otimes S(-2){\bf \ar[u]} & \\
 & S_{21111}V_6^\vee\otimes S(-3)\ar[u] & \\ 
 S_{311111}V_6^\vee\otimes S(-4)\ar[ru] &\oplus &S_{22222}V_6^\vee\otimes S(-5)\ar[lu]\\
 & S_{322221}V_6^\vee\otimes S(-6)\ar[ru]{\bf \ar[lu]}& \\
 & S_{332222}V_6^\vee\otimes S(-7)\ar[u] & \\
 & \det(V_6^\vee)^3\otimes S(-9){\bf \ar[u]} & \\
& 0\ar[u] &
}$$

Since $D$ is a Pfaffian locus of the expected dimension, given by a skew-symmetric map 
$\cQ\ra\cQ^\vee(1)$, we deduce the following 
free resolution of its twisted ideal sheaf (we used identifications like $S_{332222}\cQ^\vee=\wedge^2\cQ^\vee(-2)$) :

$$\xymatrix@-2.5ex{
 & 0 & \\ & \cI_D(2)\ar[u] & \\ &  \wedge^4\cQ\ar[u] & \\
 &  \fsl (\cQ)\ar[u] & \\ 
 S^2\cQ^\vee(-1) \ar[ru] &\oplus & S^2\cQ(-1) \ar[lu]\\
 & \fsl (\cQ)(-2)\ar[ru]\ar[lu] & \\
 & \wedge^2\cQ(-3)\ar[u] & \\
 & \cO_G(-4)\ar[u] & \\
& 0\ar[u] &
}$$
Note that this resolution is self-dual, up to twist. Moreover, using the Bott-Borel-Weil theorem 
one can check that all the factors are acyclic homogeneous vector bundles, with two exceptions:
$\wedge^4\cQ$ has a non zero space of sections, isomorphic to $\wedge^4V_8$; and  $S^2\cQ^\vee(-1)$,
which is one of the two irreducible factors of $\Omega_G^2$, has a one dimensional cohomology group in degree two. We end up with a canonical exact sequence
\begin{equation}\label{71}
0\lra \wedge^4V_8\lra H^0(G,\cI_D(2))\lra \CC\lra 0,
\end{equation}
and our claim follows.
\qed 

\begin{remark}
Being defined by a cubic Pfaffian, the equation of the 
hypersurface $D_{Z_1}(v)$ must be a cubic $\SL(V_8)$-covariant of $v$ in $\wedge^4V_8$, 
taking values in $H^0(\cO_G(2))\simeq S_{22}V_8$. In fact it is a $\GL(V_8)$-covariant, 
that by homogeneity with respect to $V_8$, must take its values in $S_{22}V_8\otimes\det(V_8)$. One can check that 
the latter module has multiplicity one inside $S^3(\wedge^4V_8)$, so this covariant is unique 
up to scalar. For example, it can be obtained as the composition 
\begin{multline*}
S^3(\wedge^4V_8)\hookrightarrow  S^3(\wedge^2V_8\otimes \wedge^2V_8)\to 
S^3(\wedge^2V_8)\otimes S^3(\wedge^2V_8)\to S^3(\wedge^2V_8)\otimes \wedge^6V_8 \to \\
\to S^3(\wedge^2V_8)\otimes \wedge^2V_8^\vee\otimes\det(V_8)
\to S^2(\wedge^2V_8)\otimes\det(V_8)
\to S_{22}V_8\otimes\det(V_8).    
\end{multline*}

Following the natural morphisms involved in these arrows, this would allow to give an
explicit formula for an equation of the quadratic hypersurface $D_{Z_1}(v)$ in terms 
of the coefficients of $v$ (this was done in \cite{rsss} for the Coble quartic itself). 
It would suffice to do this when $v$ belongs to our prefered 
Cartan subspace; this is in principle a straightforward computation but the resulting 
formulas would be huge.
\end{remark}

\begin{remark}
The embedding of $\wedge^4V_8$ inside $H^0(G,\cI_D(2))$ in (\ref{71}) is given by the 
derivatives of $D_{Z_1}(v)$ with respect to $v$, that is, can be obtained 
by polarizing the cubic morphism discussed in the previous remark. On the other hand, modulo these derivatives, (2)
shows that there is a uniquely defined "non-Pfaffian"  quadric vanishing on $D$. 
This non-Pfaffian quadric comes from the contribution of $S^2\cQ^\vee(-1)$ in 
the resolution of $\cI_D(2)$. Since in this resolution, these two terms are connected 
one to the other through three morphisms having respective degree two, 
one, and two with respect to $v$,  the non-Pfaffian quadric 
must be given by a {\it quintic} covariant in $v$. And indeed, a computation with LiE  \cite{LiE} shows that 
$$\Hom(S^5(\wedge^4 V_8), S_{22}V_8\otimes (\det (V_8))^2)^{\GL(V_8)}\simeq\CC^2.$$
A special line in this space of covariants is generated by the cubic covariant defining 
the Pfaffian quadric, twisted by the invariant quadratic form (defined by the wedge product). The quotient is our 
non-Pfaffian quadric. As before we could in principle compute it explicitely by 
constructing a specific covariant. One way to construct such a covariant is to observe
that 
$$S^2(\wedge^4 V_8)\supset S_{221111}V_8\subset \wedge^2 V_8\otimes \wedge^6 V_8=
\wedge^2 V_8\otimes \wedge^2 V_8^\vee\otimes\det (V_8).$$
Taking the square of the resulting morphism we can define a quartic covariant
$$S^4(\wedge^4 V_8)\to S^2(\wedge^2 V_8)\otimes S^2(\wedge^2 V_8^\vee)\otimes\det (V_8)^2\to S_{22}V_8\otimes \wedge^4 V_8^\vee\otimes\det (V_8)^2,$$
hence the desired  quintic covariant. 
\end{remark}

\subsubsection{The normal bundle of $D$ in $G(2,V_8)$}

Let us now bound from below the dimension of 
$H^0(D,\cN_{D/G}^\vee(2))$. By Lemma \ref{normal}, this space is isomorphic with $H^0(\cN_{D/G})$, which 
parametrizes infinitesimal deformations of $D$ inside $G$. Some of these deformations 
must be induced by the deformation of $[v]$ inside $\PP(\wedge^4V_8)$, which 
should provide $69$ parameters. But recall that the family $\cF_H$ of Hecke lines inside $SU_C(2)$ is a subvariety of 
$G(2,V_8)$, birationally fibered over the curve $C$, with one fiber isomorphic to 
$D\simeq \SU_C(2,\cO_C(c))$ for some point $c\in C$.  So we expect one extra deformation 
of $D$ to be obtained by deforming $c$ in the curve $C$. That these deformations are independent 
is essentially the content of

\begin{lemma}
\label{lem_N}
$h^0(D,\cN_{D/G})\ge 70.$
\end{lemma}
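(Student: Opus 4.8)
The plan is to produce explicitly a $70$-dimensional space of first-order deformations of $D$ inside $G$. By Lemma \ref{normal} we may identify $\cN_{D/G}\simeq \wedge^2\mathcal{K}er(\psi_v)^\vee(1)$, but it is more practical to read $H^0(D,\cN_{D/G})$ as the Zariski tangent space at $[D]$ to the Hilbert scheme of $G$ and to exhibit deformations geometrically; there are two sources.

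\emph{Deformations from the four-form.} For $v'$ in a suitable open neighbourhood of $v$ in $\wedge^4V_8$, the locus $D_{Z_6}(v')$ is still smooth of the expected dimension, and $v'\mapsto D_{Z_6}(v')$ is unchanged under rescaling, so it defines a morphism from an open subset of $\PP(\wedge^4V_8)$ to the Hilbert scheme of $G$, whose differential at $[v]$ is a linear map $\kappa\colon \wedge^4V_8/\CC v\to H^0(D,\cN_{D/G})$. The first goal is to show $\kappa$ is injective, which already gives $h^0(D,\cN_{D/G})\ge 69$. Since $\wedge^4V_8$ is a theta-representation with a Cartan subspace $\fh$ of dimension $7$, transverse for $v$ generic to the $\SL(V_8)$-orbit of $v$, we may assume $v\in\fh$ and decompose $\wedge^4V_8/\CC v=\overline{\fsl(V_8)\cdot v}\oplus(\fh/\CC v)$. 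By the $\GL(V_8)$-equivariance of $v'\mapsto D_{Z_6}(v')$, on the summand $\overline{\fsl(V_8)\cdot v}$ the map $\kappa$ agrees with the map $\rho\colon \fsl(V_8)\to H^0(D,\cN_{D/G})$ induced by the infinitesimal $\mathrm{PGL}(V_8)$-action on $G$; and $\rho$ is injective because the stabilizer of $D$ in $\mathrm{PGL}(V_8)$ is finite. Indeed this stabilizer embeds in $\mathrm{Aut}(\SU_C(2,\cO_C(c)))$, which is finite for generic $C$, the restriction homomorphism being injective since an element of $\mathrm{PGL}(V_8)$ acting trivially on $D$ would preserve each of the $2$-planes parametrized by $D$, which span $V_8$, and hence be trivial. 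On the summand $\fh/\CC v$, a nonzero class moves $[v]$ inside $\PP(\fh)$, hence deforms the pointed plane quartic and in particular the curve $C$ nontrivially, hence deforms $D\simeq\SU_C(2,\cO_C(c))$ nontrivially as an abstract variety, by the Torelli property for these moduli spaces. Since the deformations in $\mathrm{im}\,\rho$ preserve the abstract isomorphism type of $D$, the two summands contribute independently and $\kappa$ is injective.

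\emph{The extra deformation from the marked point.} By Theorem \ref{thm_main}, $D\simeq\SU_C(2,\cO_C(c))$ for a definite point $c\in C$, and for every $c'\in C$ Beauville's Hecke embedding \cite{beauville} gives $\varphi_{c'}\colon \SU_C(2,\cO_C(c'))\hookrightarrow G$; as $c'$ varies near $c$ this is a one-parameter family of subvarieties of $G$, and its derivative at $c$ is a class $\tau\in H^0(D,\cN_{D/G})$. I would argue $\tau\notin\mathrm{im}\,\kappa$, which yields $h^0(D,\cN_{D/G})\ge 69+1=70$. First, $\tau\ne 0$: distinct $c'$ give distinct subvarieties of $G$, because the point can be recovered from a general member of the corresponding covering family of Hecke lines, as recalled in Subsections \ref{sec_hecke_lines} and \ref{sec_ruling_lines}. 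Second, $\tau\notin\mathrm{im}\,\kappa$: a deformation in $\mathrm{im}\,\rho$ moves $D$ by an automorphism of $G$ and so fixes $C$ together with its recovered marked point, while a deformation in the image of $\fh/\CC v$ keeps the marked point a flex of the varying plane quartic; since flex-pointed genus three curves form a divisor in the space of pointed genus three curves, the direction $\tau$, which keeps $C$ fixed while moving $c$ away from the flexes, is transverse to $\mathrm{im}\,\kappa$.

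The main obstacle is the independence encoded in the injectivity of $\kappa$: separating the orbit directions from the Cartan directions is precisely the assertion that deforming the curve yields genuinely new deformations of the embedded moduli space, and it rests on a Torelli-type statement for $\SU_C(2,\cO_C(c))$ together with the transversality of the Cartan subspace to generic orbits. The complementary check that $\tau\notin\mathrm{im}\,\kappa$ is of the same nature and relies on the slightly delicate point that $c$, although invisible on the abstract variety $\SU_C(2,\cO_C(c))$, is encoded in its Hecke-line embedding into $G$.
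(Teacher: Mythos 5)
Your strategy is genuinely different from the paper's. The paper proves this lemma by pure cohomology: it realizes $D$ as the zero locus $Z_6(v)$ of a section of a bundle $\cV$ on $Fl(2,6,V_8)$, identifies $\cN_{D/G}$ with the restriction of $\wedge^2(\cU_6/\cU_2)\otimes\det(V_8/\cU_6)$, and runs the Koszul complex of $\cV$ through Bott--Borel--Weil to get $\chi(\cN_{D/G})=70$ and $H^i(\cN_{D/G})=0$ for $i\ge 2$, whence $h^0=70+h^1\ge 70$. What you propose is a rigorous version of the heuristic the authors themselves state just before the lemma (``should provide $69$ parameters\dots we expect one extra deformation''), and it is a reasonable plan; but as written it has gaps at precisely the points where the heuristic is hard to make precise, which is presumably why the paper computes instead.

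Two gaps are genuine. First, your argument that $\tau\ne 0$ is a non sequitur: injectivity of $c'\mapsto \varphi_{c'}(\SU_C(2,\cO_C(c')))$ does not imply that its derivative is nonzero at $c$, and you cannot appeal to genericity of $c$ because $c$ is the \emph{special} point singled out by $v$ (the marked flex), so it could a priori be one of the finitely many points where the derivative of an injective map vanishes. This can be repaired (e.g.\ via the birational fibration $\tilde\cF_H\to\cF_H$ and smoothness of $\cF_H$ along a general point of $D$, or by first showing the ``recovered marked point'' map is a morphism and differentiating the composite), but the repair is exactly the missing content. Second, every independence claim you make rests on comparing derivatives through the modular interpretation: you need that $v'\mapsto D_{Z_6}(v')$ is, to first order along $\fh$, the family of moduli spaces of the family of pointed curves (a relative version of Theorem \ref{thm_main} that is nowhere established), and that the assignment ``embedded sixfold $\mapsto$ pointed curve'' is a morphism near $[D]$ in the Hilbert scheme so that the chain rule applies to $\mu\circ\kappa$ and $\mu(\tau)$. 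Without these, neither the injectivity of $\kappa$ on $\fh/\CC v$ nor $\tau\notin\mathrm{im}\,\kappa$ is proved. Smaller issues: your faithfulness argument for the stabilizer of $D$ in $\mathrm{PGL}(V_8)$ (``preserves each $2$-plane, the planes span $V_8$, hence trivial'') is insufficient as stated --- a non-scalar element can preserve every member of a spanning family of planes (think of the invariant lines of a two-block diagonal map); one needs, say, that two of the preserved lines through a general point of $\cC$ meet only at that point. And you quietly invoke the finiteness of $\mathrm{Aut}(\SU_C(2,L))$ and the isomorphism $H^1(T_{\SU_C(2,L)})\simeq H^1(T_C)$; these are true and citable, but they make the argument far less self-contained than the paper's five-line spectral sequence conclusion.
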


\proof
The locus in $\wedge^4 V_6\simeq\wedge^2V_6^\vee$ corresponding to skew-symmetric forms of rank at most $2$ is desingularized by the total space of $\wedge^4 \cU_4 $ over the Grassmannian $G(4,V_6)$. As a consequence of this and of \cite[Proposition 2.3]{bfmt}, the Pfaffian locus $D$ is desingularized by the zero locus $Z:=Z_6(v)$ inside $Fl(2,6,V_8)$ of a (general) section of the  bundle $\cV=\wedge^4 (V_8/\cU_2)/\wedge^4 (\cU_6/\cU_2)$. This bundle is an extension of irreducible bundles 
$$ 0 \to \wedge^3 (\cU_6/\cU_2)\otimes (V_8/\cU_6) \to \cV \to \wedge^2 (\cU_6/\cU_2)\otimes \det(V_8/\cU_6) \to 0.$$
By dimension count,
$Z$ is in fact isomorphic to $D$ via the natural projection. Under this isomorphism and 
by Lemma \ref{normal}, $\cN_{D/G}$ can be identified with the restriction of $\cN:=\wedge^2 (\cU_6/\cU_2)\otimes \det(V_8/\cU_6)$ to $Z$. In order to compute the cohomology of this restriction we can tensorize with $\cN$ the Koszul complex $\wedge^\bullet \cV^\vee$ of the global section of $\cV$, whose zero locus is 
 $Z\subset Fl(2,6,V_8)$. This gives the following resolution of $\cN_{D/G}$ by locally free sheaves on $Fl(2,6,V_8)$
$$ 0\to \wedge^\bullet \cV^\vee \otimes \cN \to \cN_{D/G} \to 0.$$
By applying the Bott-Borel-Weil Theorem we can compute the cohomology groups of the bundles $\wedge^k \cV^\vee \otimes \cN$, for all $k \ge 0$. Those 
that do not vanish are the following:
\begin{gather*}
H^0(\wedge^0 \cV^\vee \otimes \cN)=\wedge^4 V_8, \\
H^0(\wedge^1 \cV^\vee \otimes \cN)=\CC,\\
H^2(\wedge^3 \cV^\vee \otimes \cN)=\CC , \;\; H^3(\wedge^3 \cV^\vee \otimes \cN)=\CC^2, \\
H^4(\wedge^4 \cV^\vee \otimes \cN)=  H^5(\wedge^4 \cV^\vee \otimes \cN)=\wedge^4 V_8,\\
H^4(\wedge^5 \cV^\vee \otimes \cN)=\fsl(V_8)\oplus \CC^3, \quad  H^5(\wedge^5 \cV^\vee \otimes \cN)=\fsl(V_8)\oplus \CC^4 , \quad H^6(\wedge^5 \cV^\vee \otimes \cN)=\CC,\\
H^6(\wedge^7 \cV^\vee \otimes \cN)= H^7(\wedge^7 \cV^\vee \otimes \cN)=\CC,\\
H^8(\wedge^9 \cV^\vee \otimes \cN)= H^9(\wedge^9 \cV^\vee \otimes \cN)=\CC,\\
H^{12}(\wedge^{13} \cV^\vee \otimes \cN)= H^{13}(\wedge^{13} \cV^\vee \otimes \cN)=
\CC.
\end{gather*}

\medskip
A direct consequence is that $\chi(\cN_{D/G})=70$. Moreover, observe that 
$$H^q(\wedge^k \cV^\vee \otimes \cN)=0\quad \mathrm{for} \;  q-k>1.$$ 
Since these groups give the first page of the spectral sequence in cohomology 
induced by the Koszul complex of $\cO_{Z}$ twisted by $\cN$,  this implies that 
$H^i(\cN_{D/G})=0$ for $i>1$.  
Therefore $h^0(\cN_{D/G})=\chi(\cN_{D/G})+h^1(\cN_{D/G})\ge 70$.\qed

 \subsubsection{An affine module $M$}
As usual $V_6$ denotes a six dimensional vector space. The ideal $I$ of the cone over $G(2,V_6)$ is generated
by the submaximal Pfaffians of the generic skew-symmetric matrix of size $6$; the $\GL(6)$-module generated by these submaximal Pfaffians is $\wedge^4V_6^\vee\subset 
S^2(\wedge^2V_6^\vee)$. The square of $I$ is then generated by the symmetric square of this module, which decomposes as 
$$S^2(\wedge^4V_6^\vee) = S_{221111}V_6^\vee\oplus S_{2222}V_6^\vee.$$
The first component is $\wedge^2V_6^\vee\otimes \det V_6^\vee$, and must be  
interpreted as parametrizing quartics that are multiples of linear forms by the Pfaffian 
cubic. The ideal they generate is $S_+I_{P}$, where $S_+\subset S$ is the irrelevant 
ideal, and $I_P$ denotes the ideal of the Pfaffian hypersurface. 

Consider the exact sequence 
$$ 0\to S_+I_P \to I^2 \to M:=I^2/ S_+I_P \to 0.$$ 
The quotient module $M$ is generated by $S_{2222}V_6^\vee$.
According to \cite{Macaulay2}, the minimal resolution 
$R_\bullet$ of $M$ has the Betti numbers of Table \ref{tab:table1}.

\begin{table}[h]
    \[\begin{array}{l|cccccc}
         & 0 & 1 & 2 & 3 & 4 & 5 \\ \hline
       0: & \cdot & \cdot & \cdot & \cdot & \cdot & \cdot  \\
      1: & \cdot & \cdot & \cdot & \cdot & \cdot & \cdot  \\
      2: & \cdot & \cdot & \cdot & \cdot & \cdot & \cdot  \\
      3: & \cdot & \cdot & \cdot & \cdot & \cdot & \cdot  \\
      4: & 105 & 399 & 595 & 405 & 105 & \cdot  \\
      5: & \cdot & \cdot & \cdot & 21 & 35 & 15  
    \end{array}
\]

\caption{Betti table of $M$}
\label{tab:table1}
\end{table}

The minimal resolution is $\GL_6$-equivariant and it is not difficult to 
write it in terms of Schur functors. Indeed, we know that the quartic generators
are parametrized by $S_{2222}V_6^\vee$, so the first syszygy module must be 
contained in $S_{2222}V_6^\vee\otimes \wedge^2V_6^\vee$, and it turns 
out that there is a unique $\GL_6$-module of the correct dimension inside this tensor 
product. Proceeding inductively we arrive at the following conclusion: the minimal
$\GL_6$-equivariant resolution of the $\bS$-module $M$ has the following shape:
$$\xymatrix@-2ex{
 0 & \\
 M\ar[u] & \\
 S_{2222}V_6^\vee \otimes S(-4)\ar[u] & \\
 (S_{32221}V_6^\vee\oplus\ar[u] {\bf S_{222211}}V_6^\vee) \otimes S(-5) & \\
 (S_{422211}V_6^\vee\oplus S_{33222}V_6^\vee\ar[u] \oplus {\bf S_{322221}}V_6^\vee)\otimes S(-6) & \\
 (S_{432221}V_6^\vee\oplus\ar[u] {\bf S_{422222}}V_6^\vee)\otimes S(-7) & {\bf S_{333331}}V_6^\vee \otimes S(-8)\ar[lu] \\
 S_{442222}V_6^\vee\otimes S(-8)\ar[u]  & {\bf S_{433332}}V_6^\vee\otimes S(-9)\ar[lu]\ar[u] \\
  & {\bf S_{443333}}V_6^\vee\otimes S(-10)\ar[lu]\ar[u] \\
 & 0\ar[u] }$$
 Here vertical arrows have degree one and 
 diagonal arrows have degree two. Notice that the complex in bold reproduces the 
 resolution of the Pfaffian ideal $I$ itself. 
 
 \begin{remark}
 As J. Weyman observed, one could also obtain this resolution by considering the 
 natural resolution of the Pfaffian hypersurface given by the total space of 
 the vector bundle $\wedge^2\cU$ over the Grassmannian $G(4,V_6)$. The morphism
 $\pi$ from $\Tot(\wedge^2\cU)$ to $\wedge^2V_6$ is a resolution of singularities, 
 and one can check that $M$ is the push-forward by $\pi$ of the module given by 
 the pull-back of the line bundle $\cO(2)$ from the Grassmannian. Applying the 
 geometric technique from \cite{weyman-book}, one can extract the minimal resolution 
 of $M$ from the collection of $\GL(V_6)$-modules given by
 $$F_i = \bigoplus_{j\ge 0} H^j(G(4,V_6),\cO(2)\otimes\wedge^{i+j}(\wedge^2\cU)^\perp).$$
 Here $(\wedge^2\cU)^\perp$ is the kernel of the natural projection $\wedge^2V_6^\vee \to \wedge^2\cU ^\vee$. The bundle $(\wedge^2\cU)^\perp$ is not semisimple 
 but is an extension of $\cO(-1)$ by $\cU^\vee\otimes Q^\vee$. Remarkably, it is the 
 contribution of $\cO(-1)$ that reproduces the minimal resolution of $I$ (twisted) 
 inside that of $M$. 
 \end{remark}
 
\subsubsection{Relativizing $M$} Now we want to use these results in the 
relative setting. Since $\wedge^4 \cQ$ is a vector bundle on $G(2,V_8)$ which is locally isomorphic to $\wedge^2 V_6$, we can relativize the construction of $I$ and $I_P$ 
and $M$. For convenience let us restrict to the complement $\cX$ of the zero section inside the total space of this vector bundle.  
We get sheaves of $\cO_{\cX}$-modules and ideals that we denote respectively 
by $\cI', \cI'_P, \cM'$. Note that since we avoid the zero section, 
we get an exact sequence 
$$ 0\to \cI'_P \to \cI'^2 \to \cM' \to 0.$$

Then we consider $v\in\wedge^4V_8$ as a general section of $\wedge^4 \cQ$, 
that we 
interpret as a morphism from $G=G(2,V_8)$ to the total space of $\wedge^4 \cQ$. By the definition of orbital degeneracy loci \cite[Definition 2.1]{bfmt2}, the ideal of $D_{Z_1}(v)$ is  $\cI_P:=\cI'_P\otimes\cO_G$ and the ideal of $D=D_{Z_6}(v)$ is  $\cI_D:=\cI'\otimes\cO_G$.
Let us also denote $\cM=\cM'\otimes\cO_G$. Of course these tensor produts are 
taken over $\cO_{\cX}$.

\begin{lemma}\label{exact}
There is an exact sequence
$$ 0\to \cI_P \to \cI_D^2 \to \cM \to 0.$$ 
\end{lemma}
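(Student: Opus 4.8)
The plan is to deduce the relative exact sequence from the absolute one
$$0\to S_+I_P \to I^2 \to M \to 0$$
(or rather its restriction to $\cX$, the complement of the zero section, where it
reads $0\to \cI'_P \to \cI'^2 \to \cM'\to 0$) by pulling back along the morphism
$f:G\to\Tot(\wedge^4\cQ)$ determined by $v$. The only subtlety is that pullback
along $f$ — i.e.\ tensoring over $\cO_{\cX}$ with $\cO_G$ — is right exact but not
in general exact, so one must control the $\Tor$-sheaf $\mathcal{T}or_1^{\cO_\cX}(\cM',\cO_G)$,
which is precisely the obstruction to exactness on the left. The key point is that
$v$ is general, so $f$ meets the support stratification of $\cM'$ transversally in
the appropriate sense: this is exactly the situation governed by the relativization
results of \cite[Section 2]{bfmt} and \cite[Definition 2.1]{bfmt2}, which guarantee
that for general $v$ the orbital degeneracy loci $D=D_{Z_6}(v)$ and $D_{Z_1}(v)$
have the expected codimension, and more precisely that the relevant sheaves remain
resolved (Cohen--Macaulay of the expected depth) after restriction.

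Concretely, I would argue as follows. First, recall from the proof of Lemma
\ref{lem_I} that $\cI_D$ admits the locally free resolution on $G$ obtained by
relativizing the Pfaffian resolution of \cite{jpw}; since this resolution has the
expected length and $D$ has the expected codimension, it stays exact, which shows
that $f$ has finite Tor-dimension against the $\bS$-module $\bS/I$ and, more to the
point, that no higher $\Tor$ appears. Next, apply $-\otimes_{\cO_\cX}\cO_G$ to the
short exact sequence $0\to\cI'_P\to\cI'^2\to\cM'\to 0$. Right exactness gives the
surjection $\cI_D^2=\cI'^2\otimes\cO_G \twoheadrightarrow \cM$ and an exact sequence
$$\mathcal{T}or_1^{\cO_\cX}(\cM',\cO_G)\to \cI_P=\cI'_P\otimes\cO_G\to \cI_D^2\to \cM\to 0.$$
It therefore remains to check two things: that $\cI'_P\otimes\cO_G$ really is the
ideal sheaf $\cI_P$ of $D_{Z_1}(v)$ (this is \cite[Definition 2.1]{bfmt2}, together
with the fact that the Pfaffian hypersurface is a reduced divisor so its ideal is
$S$-torsion-free, hence unchanged by the generic restriction), and that the
$\Tor_1$ term vanishes. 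For the latter, one uses the minimal $\GL_6$-equivariant
resolution $R_\bullet$ of $M$ displayed above: relativizing it gives a locally free
complex $\cR_\bullet$ on $\cX$ resolving $\cM'$, and by the generality of $v$ this
complex stays acyclic after restriction to $G$ (again by the transversality built
into \cite[Proposition 2.3]{bfmt}), so $\mathcal{T}or_i^{\cO_\cX}(\cM',\cO_G)=0$ for
$i>0$. Feeding this back in gives $0\to\cI_P\to\cI_D^2\to\cM\to 0$, which is the
claim.

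The step I expect to be the main obstacle is verifying that the relativized
resolution of $\cM'$ stays exact when restricted along $f$, equivalently that $v$
is general enough to avoid all the ``bad'' loci where the codimension of the
degeneracy strata jumps. In the references \cite{bfmt, bfmt2} this is phrased in
terms of the orbital degeneracy locus construction being well-behaved for general
global sections, so the real work is to match our sheaf $\cM'$ (defined from the
square of the Pfaffian ideal) with the framework there — in particular to see that
$\cM'$ is, up to the irrelevant-ideal subtlety handled by passing to $\cX$, the
pushforward of a line bundle from the Kempf collapsing $\Tot(\wedge^2\cU)\to
\wedge^2V_6$ (as noted in J.~Weyman's remark), so that the geometric technique of
\cite{weyman-book} applies fibrewise and relativizes. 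Once this identification is
in place, the vanishing of the higher $\Tor$ sheaves and the exactness of the
sequence are formal. A clean alternative, if one wants to avoid the resolution
entirely, is to observe that all three sheaves in the sequence are supported on
$G$ with $\cI_P$ and $\cI_D^2$ being (shifted) ideal-type sheaves whose depth along
$D$ is controlled by the above resolutions; then exactness on the left is a local
depth computation at the generic point of $D$ together with the fact, proved in
the absolute setting, that $I^2/S_+I_P$ has no embedded components — and genericity
of $v$ transports this.
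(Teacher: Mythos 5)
Your first half --- applying right exactness of $-\otimes_{\cO_\cX}\cO_G$ to $0\to\cI'_P\to\cI'^2\to\cM'\to 0$ to get exactness at $\cI_D^2$ and $\cM$, thereby reducing everything to the injectivity of $\cI_P\to\cI_D^2$ --- is exactly what the paper does. Where you diverge is on that injectivity, and there your argument has a gap. The paper disposes of it in one line: $\cI_P=\cI'_P\otimes\cO_G$ is the \emph{invertible} ideal sheaf $\cO_G(-2)$ of the quadric $Q=D_{Z_1}(v)$, its image in $\cO_G$ lands inside $\cI_D^2$ precisely because $D\subset\Sing(Q)$, and a nonzero map from a line bundle to $\cO_G$ on the integral variety $G$ is automatically injective. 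No $\mathcal{T}or$ computation is needed; your closing ``clean alternative'' is the germ of this argument, but as phrased (a depth computation at the generic point of $D$, no embedded components of $M$) it aims at the wrong locus --- injectivity of a map out of an invertible sheaf need only be checked at the generic point of $G$.

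Your main route instead derives injectivity from $\mathcal{T}or_1(\cM',\cO_G)=0$, which you propose to obtain from the relativized resolution of $M$ remaining exact after restriction along $f$. This is the weak point, for two reasons. First, that exactness is \emph{not} delivered by generic perfection: $M=I^2/S_+I_P$ is not a perfect module (its minimal resolution has length $5$, while $M$ is supported on the whole Pfaffian hypersurface, hence has grade $1$), so the Eagon--Northcott theorem --- which the paper invokes only for the perfect module $S/I$ --- does not apply; and \cite[Proposition 2.3]{bfmt} controls codimensions of orbital degeneracy loci, not acyclicity of an arbitrary relativized complex. One could repair this with a Buchsbaum--Eisenbud/Peskine--Szpiro acyclicity argument, checking that the degeneracy loci of all the differentials of $R_\bullet$ keep their absolute codimensions for general $v$, but you would have to actually carry that out. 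Second, beware that the paper's logical order is the reverse of yours: it proves the present lemma first by the direct argument above, and only afterwards deduces $\mathcal{T}or_i(\cM',\cO_G)=0$ from it (via the long exact sequence of $\mathcal{T}or$, which identifies $\mathcal{T}or_1(\cM',\cO_G)$ with $\ker(\cI_P\to\cI_D^2)$), and only then concludes that the relativized complex is a resolution of $\cM$. So justifying your $\mathcal{T}or$-vanishing by appeal to the surrounding results of the paper would be circular.
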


\begin{proof}
By the right exactness of tensor product, here by $\cO_G$, 
we get an exact sequence 
$$ \cI_P \to \cI_D^2 \to \cM \to 0.$$ 
But the map $\cI_P \subset \cI_D^2$ (which expresses the fact that $D$ is 
contained in the singular locus of the Pfaffian hypersurface) clearly remains 
an injection, and we are done.
\end{proof}

In order to control $\cM$ we will now  consider the complex of vector 
bundles induced by the resolution we constructed for $M$. We can deduce a resolution of  $\cM'$ and then tensor out again by $\cO_G$. 
In order to prove that we get  a resolution  of $\cM$ (the resolution given just below), 
we need to check that the 
Tor-sheaves of $\cO_\cX$-modules $\mathcal{T}or_i(\cM',\cO_G)$ vanish for $i>0$.
All the Tor-sheaves we compute in the sequel will also be for $\cO_\cX$-modules.
$$\xymatrix@-2ex{
 0 & \\
 \cM\ar[u] & \\
 S_{2222}\cQ(-4)\ar[u] & \\
 S_{32221}\cQ(-5)\oplus\ar[u] S_{222211}\cQ(-5) & \\
 S_{422211}\cQ(-6)\oplus S_{33222}\cQ(-6)\ar[u] \oplus S_{322221}\cQ(-6) & \\
 S_{432221}\cQ(-7)\oplus \ar[u] S_{422222}\cQ(-7) & S_{333331}\cQ(-8)\ar[lu] \\
 S_{442222}\cQ(-8)\ar[u]  & S_{433332}\cQ(-9)\ar[lu]\ar[u] \\
  & S_{443333}\cQ(-10)\ar[lu]\ar[u] \\
 & 0\ar[u] }$$

\begin{lemma} For any $i>0$, 
\begin{enumerate}
    \item $\mathcal{T}or_i(\cI'_P,\cO_G)=0,$
    \item $\mathcal{T}or_i(\cO_\cX/\cI',\cO_G)=0,$
    \item $\mathcal{T}or_i(\cI'/\cI'^2,\cO_G)=0,$
    \item $\mathcal{T}or_i(\cM',\cO_G)=0.$
\end{enumerate}
\end{lemma}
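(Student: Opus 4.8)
The plan is to treat the four statements in sequence, using that $\cX$ is the complement of the zero section in the total space of $\wedge^4\cQ$ over $G$, and that $v$ defines the section $G \to \Tot(\wedge^4\cQ)$ landing in $\cX$ whose image is a regular section (it cuts out the orbital degeneracy loci with the expected codimension, by the genericity of $v$ and the results of \cite{bfmt}). The key structural observation is that $\cO_G$ is resolved over $\cO_\cX$ by a Koszul-type complex: locally, $\cX$ looks like $G \times (\wedge^2 V_6 \setminus 0)$, and the graph of $v$ is cut out by the vanishing of the ``universal'' tautological section minus $v$, which is a regular sequence of length $\operatorname{rank}(\wedge^4\cQ) = 15$. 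So $\mathcal{T}or^{\cO_\cX}_i(N', \cO_G)$ can be computed by restricting this Koszul complex to any $\cO_\cX$-module $N'$, and vanishing of higher Tor is equivalent to the section remaining regular on $N'$, i.e. to a depth/grade statement for $N'$ along the graph of $v$.

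First I would handle (1): $\cI'_P$ is, fiberwise, the ideal of the Pfaffian cubic hypersurface, which is a hypersurface, hence $\cO_\cX/\cI'_P$ has projective dimension $1$ over $\cO_\cX$ and $\cI'_P$ is locally free of rank one away from the singular locus — more precisely $\cI'_P$ itself, being the ideal sheaf of a Cartier divisor, is an invertible $\cO_\cX$-module, so $\mathcal{T}or_i(\cI'_P,\cO_G)=0$ for $i>0$ trivially. For (2), $\mathcal{T}or_i(\cO_\cX/\cI',\cO_G)$: here $\cO_\cX/\cI'$ restricts on fibers to the coordinate ring of the cone over $G(2,V_6)$, which is Cohen--Macaulay (indeed arithmetically Cohen--Macaulay, by the Buchsbaum--Eisenbud/J\'ozefiak--Pragacz--Weyman resolution recalled above) of codimension $6$. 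Because $v$ is general, the graph of $v$ meets $\operatorname{Supp}(\cO_\cX/\cI') = $ (cone over the relative $G(2,\cQ)$-bundle) in the expected codimension, so the section defining $\cO_G$ is a regular sequence on the CM module $\cO_\cX/\cI'$; hence all higher Tor vanish. This is the statement that $D=D_{Z_6}(v)$ has the expected dimension, which we already know.

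For (3), $\cI'/\cI'^2$ is the conormal sheaf of the relative cone over $G(2,V_6)$ inside $\cX$; on fibers it is the conormal module of an arithmetically Gorenstein (in fact the Pfaffian) ideal, and again one checks it is a maximal Cohen--Macaulay module over $\cO_\cX/\cI'$ — this follows from the self-duality of the JPW resolution (the conormal module of a codimension-$6$ Gorenstein-linked-to-complete-intersection situation is CM), or more directly from the exact sequence relating $\cI'/\cI'^2$ to $\wedge^2\mathcal{K}er(\psi)$ established in Lemma \ref{normal}, which exhibits it as a vector bundle on the smooth locus tensored appropriately. Since it is supported on $\operatorname{Supp}(\cO_\cX/\cI')$ and is CM of the same dimension, the same genericity/regular-sequence argument as in (2) gives vanishing of higher Tor. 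Finally (4) follows formally from (1)--(3): the defining exact sequences $0 \to \cI'_P \to \cI'^2 \to \cM' \to 0$ and $0 \to \cI'^2 \to \cI' \to \cI'/\cI'^2 \to 0$ (and $0 \to \cI' \to \cO_\cX \to \cO_\cX/\cI' \to 0$) yield long exact sequences in $\mathcal{T}or_\bullet(-,\cO_G)$; from (1) and the $\cO_\cX$-freeness of $\cI'_P$ we first get $\mathcal{T}or_i(\cI'^2,\cO_G)=0$ for $i>0$ once we know $\mathcal{T}or_i(\cI',\cO_G)=0$, which comes from (2) via the structure sequence for $\cI'$, and then (3) pushes this through the sequence $0 \to \cI'^2 \to \cI' \to \cI'/\cI'^2 \to 0$ to conclude $\mathcal{T}or_i(\cM',\cO_G)=0$.

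The main obstacle I expect is (3): verifying that the relative conormal module $\cI'/\cI'^2$ is maximal Cohen--Macaulay over $\cO_\cX/\cI'$ (equivalently that $v$ restricts to a regular sequence on it). For (1) and (2) the CM property is classical, but for the conormal module one must either invoke linkage theory for the Pfaffian ideal or compute directly — for instance by exhibiting $\cI'/\cI'^2$ as a second syzygy in the relativized JPW complex, or by the vector-bundle description in Lemma \ref{normal} together with a check that the non-locally-free locus (the zero section, already removed) does not interfere. In the write-up I would most likely argue (2) and (3) simultaneously by noting that both $\cO_\cX/\cI'$ and $\cI'/\cI'^2$ admit finite $\cO_\cX$-free resolutions of length $6$ (from the relativized resolutions above and the JPW resolution), are supported in the expected relative codimension, and then conclude by Auslander--Buchsbaum that they are CM, after which the generic section of $\cO_G$ is automatically regular on them.
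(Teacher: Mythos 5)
Your overall architecture matches the paper's: (1) via local freeness of $\cI'_P$; (2) via perfection of the Pfaffian ideal together with the expected-codimension hypothesis (the paper simply cites the Generic Perfection Theorem of Eagon--Northcott, where you re-derive it through the Koszul resolution of $\cO_G$ over $\cO_\cX$ and a regular-sequence/depth argument --- these are equivalent); (3) by reduction to (2); and (4) by d\'evissage through the short exact sequences. Two points need attention.

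First, you single out (3) as the main obstacle and propose linkage theory or a maximal Cohen--Macaulay property of the conormal module; none of this is needed. The cone over $G(2,V_6)$ is smooth away from its vertex, and the vertices of the relative cones form exactly the zero section of $\wedge^4\cQ$, which was removed when passing to $\cX$. Hence $V(\cI')\subset\cX$ is smooth and $\cI'/\cI'^2$ is a \emph{locally free} $\cO_\cX/\cI'$-module, so $\mathcal{T}or_i(\cI'/\cI'^2,\cO_G)$ vanishes for $i>0$ locally because $\mathcal{T}or_i(\cO_\cX/\cI',\cO_G)$ does: (3) follows from (2) in one line. You do mention this route parenthetically (``the non-locally-free locus \dots does not interfere''), but it should be the argument, not a fallback.

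Second, your step (4) has a genuine gap at $i=1$, and the bookkeeping of which sequence uses which input is swapped. The correct order is: the structure sequence $0\to\cI'\to\cO_\cX\to\cO_\cX/\cI'\to0$ and (2) give $\mathcal{T}or_i(\cI',\cO_G)=0$; then $0\to\cI'^2\to\cI'\to\cI'/\cI'^2\to0$ and (3) give $\mathcal{T}or_i(\cI'^2,\cO_G)=0$; finally $0\to\cI'_P\to\cI'^2\to\cM'\to0$ and (1) give $\mathcal{T}or_i(\cM',\cO_G)=0$ for $i\ge 2$ only. At $i=1$ the long exact sequence yields
$$0\lra\mathcal{T}or_1(\cM',\cO_G)\lra\cI_P\lra\cI_D^2,$$
so one still needs that $\cI_P\to\cI_D^2$ remains injective after tensoring by $\cO_G$. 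This is exactly the content of the paper's preceding lemma establishing the exactness of $0\to\cI_P\to\cI_D^2\to\cM\to0$ (injectivity holds because $\cI_P$ is an ideal sheaf contained in $\cI_D^2$ inside $\cO_G$). A purely formal d\'evissage from (1)--(3), as your plan claims, does not kill $\mathcal{T}or_1(\cM',\cO_G)$; note also that $\cM'$ itself is far from perfect (its support has codimension one while its projective dimension is six), so no generic-perfection shortcut is available for it.
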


\begin{proof}
(1) is obvious since $\cI'_P$ is locally free. (2) is a consequence of the 
Generic Perfection Theorem (see \cite{EN}), since $I$ and therefore $\cI'$ is perfect, and $D$ has 
the expected dimension. (3) is a consequence of (2), because $\cI'/\cI'^2$ is 
a locally free $\cO_\cX/\cI'$-module (recall that since we have a generality assumption the singular locus is avoided). 
Finally to prove (4) observe first that by the long exact sequence of Tor's,  $\mathcal{T}or_i(\cI',\cO_G)=\mathcal{T}or_ {i+1}(\cO_\cX/\cI',\cO_G)=0$ for any $i>0$. 
Because of (3) this implies that $\mathcal{T}or_i(\cI'^2,\cO_G)=0$ for any $i>0$. 
Then we can use the exact sequence of Lemma \ref{exact} to deduce that 
$\mathcal{T}or_i(\cM',\cO_G)=0$ when $i>1$, and that there is an exact sequence 
$$0\lra \mathcal{T}or_1(\cM',\cO_G)\lra \cI_P\lra \cI_D^2\lra \cM\lra 0.$$
By Lemma \ref{exact}, $\mathcal{T}or_1(\cM',\cO_G)$ vanishes as well, and we are done.\end{proof}

\begin{lemma}\label{acyclic}
$\cM(2)$ is acyclic.
\end{lemma}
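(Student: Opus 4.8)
The plan is to compute $H^\bullet(G,\cM(2))$ term by term from the relativized minimal resolution of $\cM$ produced just above, where as before $G=G(2,V_8)$. Recall that, since $\cO_G(1)=\det\cQ$, that resolution has the form
$$0\lra R_5\lra R_4\lra R_3\lra R_2\lra R_1\lra R_0\lra \cM\lra 0,$$
each $R_i$ being the explicit direct sum of bundles $S_\lambda\cQ(-d)$ obtained by relativizing the $\GL_6$-equivariant resolution of the $S$-module $M$; that this complex is a genuine resolution of $\cM$ uses precisely the vanishing of the Tor-sheaves established in the previous lemma. Twisting by $\cO_G(2)$ gives a resolution
$$0\lra R_5(2)\lra R_4(2)\lra R_3(2)\lra R_2(2)\lra R_1(2)\lra R_0(2)\lra \cM(2)\lra 0$$
whose terms are direct sums of bundles $S_\lambda\cQ(2-d)$.

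First I would show that \emph{each} of the twelve bundles $S_\lambda\cQ(2-d)$ occurring among the $R_i(2)$ is acyclic, by a direct application of the Bott--Borel--Weil theorem on $G(2,V_8)$. To such a bundle one attaches the $\GL(V_8)$-weight obtained by shifting $\lambda$ by $2-d$ and adjoining two zeros for the $\cU$-directions; the point is that in all twelve cases this weight becomes singular after adding $\rho=(7,6,5,4,3,2,1,0)$, so Bott's theorem returns zero cohomology in every degree. For instance the leading term $S_{2222}\cQ(-2)=S_{(0,0,0,0,-2,-2)}\cQ$ yields, after the $\rho$-shift, the sequence $(7,6,5,4,1,0,1,0)$, which has repeated entries; the remaining eleven summands are disposed of in exactly the same way. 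This is a purely mechanical verification, and it can be double-checked with \cite{LiE} or \cite{Macaulay2} if one wishes.

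Granting that all the $R_i(2)$ are acyclic, the conclusion is formal: the hypercohomology spectral sequence of the twisted resolution, $E_1^{p,q}=H^q(G,R_{-p}(2))\Rightarrow H^{p+q}(G,\cM(2))$, has vanishing first page, whence $H^i(G,\cM(2))=0$ for all $i$; equivalently one splits the resolution into short exact sequences and chases cohomology, using that none of the $R_i(2)$ has any cohomology at all. I expect the main (indeed the only) real obstacle to be the bookkeeping in the twelve Bott computations. Note finally that the specific twist by $\cO_G(2)$ is essential here: the untwisted sheaf $\cM$ is \emph{not} acyclic, since already its leading term $R_0=S_{2222}\cQ(-4)$ has cohomology in positive degree.
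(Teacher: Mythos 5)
Your proof is correct and follows exactly the paper's argument: twist the relativized resolution of $\cM$ by $\cO_G(2)$, check via Bott--Borel--Weil that every one of the resulting bundles $S_\lambda\cQ(2-d)$ is acyclic, and conclude by the hypercohomology spectral sequence. The paper states this in one line; your version merely spells out the (correct) bookkeeping.
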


\begin{proof} Twist the previous resolution of $\cM$ by $\cO(2)$ and deduce from the 
Bott-Borel-Weil theorem that all the bundles in the twisted resolution are acyclic. 
This implies the claim.
\end{proof}

\begin{lemma}
\label{lem_II}
$H^i(\cI_D^2(2))=0$ for any $i>0$. 
\end{lemma}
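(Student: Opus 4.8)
The plan is to combine the exact sequence from Lemma~\ref{exact} with the acyclicity established in Lemma~\ref{acyclic} and the cohomology of $\cI_P(2)$. First I would twist the exact sequence
$$0\lra \cI_P\lra \cI_D^2\lra \cM\lra 0$$
by $\cO_G(2)$, obtaining
$$0\lra \cI_P(2)\lra \cI_D^2(2)\lra \cM(2)\lra 0.$$
By Lemma~\ref{acyclic}, $H^i(\cM(2))=0$ for all $i>0$, so the associated long exact sequence in cohomology immediately gives isomorphisms $H^i(\cI_D^2(2))\simeq H^i(\cI_P(2))$ for every $i\ge 1$. Thus the whole question reduces to showing that $H^i(\cI_P(2))=0$ for $i>0$, i.e.\ that the twisted ideal sheaf of the \emph{Pfaffian hypersurface} $D_{Z_1}(v)$ in $G$ has no higher cohomology after twisting by $\cO_G(2)$.

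Next I would attack $H^i(\cI_P(2))$ directly. Since $D_{Z_1}(v)$ is a hypersurface in $G=G(2,V_8)$ cut out by a section of $\cO_G(2)$ (as established at the start of the proof of Theorem~\ref{Coble_G(2,8)}, using the cubic Pfaffian morphism $S^3(\wedge^2\cQ^\vee(1))\to\cO_G(2)$), its ideal sheaf sits in the short exact sequence
$$0\lra \cO_G\lra \cO_G(2)\lra \cO_{D_{Z_1}(v)}(2)\lra 0,$$
equivalently $\cI_P(2)\simeq\cO_G$, since twisting $0\to\cI_P\to\cO_G\to\cO_{D_{Z_1}(v)}\to 0$ by $\cO_G(2)$ and using $\cI_P\simeq\cO_G(-2)$ gives $\cI_P(2)\simeq\cO_G$. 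Hence $H^i(\cI_P(2))=H^i(G,\cO_G)=0$ for all $i>0$, because $G(2,V_8)$ has the cohomology of a point in degree zero and vanishing higher cohomology of the structure sheaf. Combining with the isomorphism from the previous paragraph, we conclude $H^i(\cI_D^2(2))=0$ for all $i>0$, which is exactly the claim of Lemma~\ref{lem_II}.

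The only real subtlety, and the step I would be most careful about, is the passage $\cI_P\simeq\cO_G(-2)$: one must make sure that the Pfaffian hypersurface is indeed a genuine (reduced, irreducible) hypersurface of class $2$ in $\Pic(G)$ and not, say, a non-reduced scheme or a divisor of higher class — but this is guaranteed by the explicit description as the rank-six Pfaffian of the generic skew map $\cQ\to\cQ^\vee(1)$, whose value lies in $\wedge^6\cQ^\vee(3)=\cO_G(2)$, together with the fact that for generic $v$ the degeneracy locus has the expected codimension one. Given all of that, the argument is a short diagram chase; the genuine content has already been front-loaded into Lemmas~\ref{exact} and~\ref{acyclic} (and, behind them, the explicit $\GL_6$-equivariant minimal resolution of the module $M$ and the Bott--Borel--Weil computation showing $\cM(2)$ is acyclic). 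So I expect no real obstacle here beyond bookkeeping; the hard work was done in constructing $M$ and verifying acyclicity.
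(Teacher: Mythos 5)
Your proposal is correct and is essentially the argument the paper intends: the paper's proof of Lemma~\ref{lem_II} simply cites Lemmas~\ref{exact} and~\ref{acyclic}, leaving implicit exactly the points you spell out, namely twisting the sequence $0\to\cI_P\to\cI_D^2\to\cM\to 0$ by $\cO_G(2)$, using the acyclicity of $\cM(2)$, and observing that $\cI_P(2)\simeq\cO_G$ (a fact the paper states explicitly only later, in the proof of Theorem~\ref{quademb}) so that its higher cohomology vanishes. No gaps; your care about $\cI_P\simeq\cO_G(-2)$ is justified and is settled by the Pfaffian being a nonzero section of $\cO_G(2)$ on the integral variety $G$.
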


\begin{proof} This follows immediately from Lemmas \ref{exact} and \ref{acyclic}.
\end{proof}

This concludes the proof of Theorem \ref{Coble_G(2,8)}. Note the following consequence: $D$ has non-obstructed deformations.

\begin{coro}\label{cor_normal_coho} $h^0(\cN_{D/G})=70$ and $h^i(\cN_{D/G})=0$ for any $i>0$.
\end{coro}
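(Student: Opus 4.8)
The plan is to exploit the resolution of $\cM$ and the exact sequence of Lemma~\ref{exact}, together with the dimension count on quadrics containing $D$, to pin down $h^0(\cI_D^2(2))$ exactly. First I would twist the exact sequence $0\to\cI_P\to\cI_D^2\to\cM\to0$ by $\cO_G(2)$ and take the long exact sequence in cohomology. By Lemma~\ref{acyclic}, $\cM(2)$ is acyclic, so $H^0(\cI_P(2))\cong H^0(\cI_D^2(2))$ and all higher cohomology of $\cI_D^2(2)$ vanishes --- this is exactly Lemma~\ref{lem_II}. Hence the vanishing $h^i(\cN_{D/G})=0$ for $i>0$ will follow from the exact sequence
$$0\lra H^0(\cI_D^2(2))\lra H^0(\cI_D(2))\lra H^0(\cN_{D/G}^\vee(2))\lra H^1(\cI_D^2(2))=0$$
once I know $h^0(\cI_D^2(2))=1$, because then $h^0(\cN_{D/G}^\vee(2))=71-1=70$, and by Lemma~\ref{normal} this equals $h^0(\cN_{D/G})$; all higher cohomology of $\cN_{D/G}$ vanishes since $\cN_{D/G}^\vee(2)\cong\cN_{D/G}$ sits, via the same twisted short exact sequence, between $H^i(\cI_D(2))$ and $H^{i+1}(\cI_D^2(2))$, both of which vanish for $i\ge1$ (the former because $\cI_D(2)$ is resolved by acyclic bundles by the proof of Lemma~\ref{lem_I}, the latter by Lemma~\ref{lem_II}).

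So the real content is the equality $h^0(\cI_D^2(2))=1$. I already have $h^0(\cI_D(2))=71$ from Lemma~\ref{lem_I} and $h^0(\cN_{D/G})\ge70$ from Lemma~\ref{lem_N}; combining these with the exact sequence $0\to H^0(\cI_D^2(2))\to H^0(\cI_D(2))\to H^0(\cN_{D/G}^\vee(2))$ forces $h^0(\cI_D^2(2))\le71-70=1$. On the other hand $h^0(\cI_D^2(2))\ge1$ because the Pfaffian quadric $D_{Z_1}(v)$ itself is singular along $D$ (as explained at the start of the subsection, $D$ being a Pfaffian locus of expected dimension is automatically contained in the singular locus of $D_{Z_1}(v)$, and $D$ is smooth so a quadric singular along $D$ lies in $H^0(\cI_D^2(2))$). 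Therefore $h^0(\cI_D^2(2))=1$, which is precisely the uniqueness assertion of Theorem~\ref{Coble_G(2,8)}, and it also gives equality $h^0(\cN_{D/G})=70$.

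I would then record the corollary: plugging $h^0(\cI_D^2(2))=1$ back into the twisted short exact sequence $0\to H^0(\cI_D^2(2))\to H^0(\cI_D(2))\to H^0(\cN_{D/G})\to 0$ (using $H^1(\cI_D^2(2))=0$) yields $h^0(\cN_{D/G})=70$ on the nose, upgrading the inequality of Lemma~\ref{lem_N} to an equality and collapsing $h^1(\cN_{D/G})$ to zero; the vanishing $h^i(\cN_{D/G})=0$ for $i\ge2$ was already obtained in the proof of Lemma~\ref{lem_N} from the spectral sequence degeneration. The main obstacle, conceptually, is not in this final bookkeeping but rests in the inputs already established --- above all the determination of the $\GL_6$-equivariant minimal resolution of the affine module $M=I^2/S_+I_P$ and the verification (via Bott--Borel--Weil) that its relativized twist $\cM(2)$ is acyclic; given those, the proof of the corollary is the short diagram chase above.
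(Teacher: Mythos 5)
Your argument is correct and follows essentially the same route as the paper: the uniqueness count $h^0(\cI_D^2(2))=1$ (squeezed between $71-70$ from Lemmas \ref{lem_I} and \ref{lem_N} and the existence of the Pfaffian quadric), fed back into the twisted conormal sequence with $H^{>0}(\cI_D^2(2))=0$ from Lemma \ref{lem_II}, is exactly how the paper deduces $h^0(\cN_{D/G})=70$ and the vanishing of $h^1$ (the $i\ge 2$ vanishing coming from the spectral sequence in Lemma \ref{lem_N}, as you note). Your alternative derivation of the full higher vanishing directly from $H^{>0}(\cI_D(2))=0$ and the conormal sequence is a harmless streamlining of the same bookkeeping.
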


\subsection{Deforming the Pfaffian hypersurface}
We already observed that varying $v$ in $\wedge^4V_8$, 
we only get a codimension one family of deformations of $D$. The missing 
dimension is provided by the choice of the point on the 
curve $C$, but this is invisible in our constructions. We will nevertheless prove 
that the special quadric section of the Grassmannian deforms. 

\begin{lemma}
For a generic point $p\in C$, and the associated embedding $\varphi_p: \SU_C(2,\mathcal{O}_C(p)) \hookrightarrow G(2,V_8)$, there exists at most one quadric hypersurface $Q_p$ in 
the Grassmannian, that is singular along $\SU_C(2,\mathcal{O}_C(p))$.
\end{lemma}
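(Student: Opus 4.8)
The plan is to deduce the statement from Theorem~\ref{Coble_G(2,8)} by deforming the distinguished member $D=D_{Z_6}(v)$ inside a flat family over $C$, and then using upper semicontinuity for the cohomology of the square of the relative ideal sheaf. First I would assemble the family. Beauville's embeddings $\varphi_p$ arise from the Hecke correspondence, which is relative over $C$: the moduli space $\mathrm{U}_C(2,1)^{\mathrm{eff}}$ is smooth, its determinant morphism onto $\Pic^1(C)^{\mathrm{eff}}\simeq C$ is proper, flat and smooth (each fibre being $\SU_C(2,\cO_C(c))$, which is smooth), and the relative Hecke modification gives a morphism $\mathrm{U}_C(2,1)^{\mathrm{eff}}\to G\times C$ over $C$, fibrewise equal to the closed immersion $\varphi_c$ (cf.\ \cite{beauville} and Remark~\ref{hecke-fibers}). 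Its image is a closed subscheme $\mathcal{D}\subset G\times C$, flat over $C$, with $\mathcal{D}_c=\varphi_c(\SU_C(2,\cO_C(c)))$; equivalently, a morphism $C\to\mathrm{Hilb}(G)$.

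Next I would place $D$ in this family. By the moduli interpretation of \cite{GSW} (see the remark following Theorem~\ref{thm_main}), a general $v\in\wedge^4V_8$ corresponds, up to $\SL(V_8)$, to a general genus-three curve together with a point on it; hence a general pair $(C,p)$ is of the form $(C_v,c_v)$ for some general $v$. For such a $v$, Theorem~\ref{thm_main} and its proof identify $\mathcal{D}_{c_v}$ with $D=D_{Z_6}(v)$ as subschemes of $G$, so the curve $C\to\mathrm{Hilb}(G)$ passes through $[D]$.

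Finally I would run the semicontinuity argument. As $\mathcal{D}\to C$ is smooth and $\mathcal{D}\hookrightarrow G\times C$ is a closed immersion of schemes smooth over $C$, the conormal sheaf $\cI/\cI^2$, with $\cI=\cI_{\mathcal{D}/G\times C}$, is locally free on $\mathcal{D}$ and flat over $C$; hence $\cI$ and $\cI^2$ are flat over $C$ and their formation commutes with base change, so that $\cI^2|_{G\times\{p\}}=\cI_{\mathcal{D}_p}^2$ for all $p$. Twisting by $\cO_G(2)$, the semicontinuity theorem shows that $p\mapsto h^0(G,\cI_{\mathcal{D}_p}^2(2))$ is upper semicontinuous on $C$. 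By Theorem~\ref{Coble_G(2,8)} its value at $p=c_v$ is $1$, so it is $\le1$ on a dense open subset of $C$. For $p$ in that subset $\mathcal{D}_p$ is smooth, and a quadratic section $\{s=0\}$ of $G$ is singular along $\mathcal{D}_p$ exactly when $s\in H^0(G,\cI_{\mathcal{D}_p}^2(2))$; therefore there is at most one quadric hypersurface $Q_p$ of $G$ singular along $\SU_C(2,\cO_C(p))$.

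I expect the main obstacle to be the middle step: checking that the $\varphi_p$ genuinely fit into a flat family over $C$ and, above all, that the member over the relevant point coincides with the orbital degeneracy locus $D_{Z_6}(v)$ as a subscheme of $G$ (not just up to abstract isomorphism), together with the fact that a general point of $C$ occurs as the marked point $c_v$ of a general $v$ with $C_v=C$. Granting this, the flatness of $\cI^2$ uses only the smoothness of all the fibres $\mathcal{D}_p$ (they are all isomorphic to $\SU_C(2,L)$), and the rest is routine. Alternatively one could reprove the bound by noting that the cohomological ingredients of the proof of Theorem~\ref{Coble_G(2,8)} --- the resolution of $\cI_D$, Lemma~\ref{normal}, and the vanishing of Lemma~\ref{lem_II} --- all deform in the family, so $h^0(\cI_{\mathcal{D}_p}^2(2))\le1$ persists for general $p$; but the semicontinuity argument is shorter.
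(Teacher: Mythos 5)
Your argument is the same as the paper's: the paper also deduces the bound by noting that $h^0(G,\cI^2_{\SU_C(2,\cO_C(p))}(2))=1$ for the special point $p$ coming from Theorem~\ref{Coble_G(2,8)} and then invoking semicontinuity over the family of embeddings $\varphi_p$ parametrized by $C$. Your write-up simply makes explicit the construction of the flat family $\mathcal{D}\subset G\times C$ and the flatness of $\cI^2$, which the paper leaves implicit.
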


\begin{proof}
Such a quadric corresponds to a line in $H^0(G(2,V_8), \cI^2_{\SU_C(2,\mathcal{O}_C(p))}(2))$ and we have computed in the proof of Theorem \ref{Coble_G(2,8)} that this space
has dimension one for certain special points $p$. By semicontinuity this dimension remains smaller or equal to one for $p$ generic. 
\end{proof}

\begin{theorem}\label{quademb}
For the generic embedding $\varphi_p$, there exists a unique quadric hypersurface 
of $G(2,V_8)$ that is singular along $\SU_C(2,\mathcal{O}_C(p))$.
\end{theorem}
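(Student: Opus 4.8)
The plan is to upgrade the ``at most one'' statement of the previous Lemma to ``exactly one'' by producing, for the generic pointed curve $(C,p)$, an explicit quadratic section of $G(2,V_8)$ singular along $\varphi_p(\SU_C(2,\cO_C(p)))$. First I would recall that the excerpt has already shown that for a \emph{special} marked point (the one determined by a general $v\in\wedge^4V_8$ via \cite[Remark 6.1]{GSW}) the Coble quadric $Q=D_{Z_1}(v)$ exists and is singular exactly along $D=D_{Z_6}(v)\simeq\SU_C(2,\cO_C(c))$; so the content to be added is that this existence is not an artefact of that particular point but survives under deformation of $p$ along $C$.

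The key steps, in order, are as follows. I would consider the universal family: over (an open subset of) the moduli of genus three curves with a marked point, one has the relative moduli space $\mathrm{U}_C(2,1)^{\mathrm{eff}}$, whose fiber over $(C,p)$ is $\SU_C(2,\cO_C(p))$, and the relative embedding into a $G(2,8)$-bundle coming from the family of Hecke lines. Over this base consider the coherent sheaf whose fiber at $(C,p)$ is $H^0(G(2,V_8),\cI^2_{\varphi_p(\SU_C(2,\cO_C(p)))}(2))$; the previous Lemma plus semicontinuity gives that its rank is $\le 1$ everywhere and $=1$ at the special point $c$ attached to a general four-form. To promote this to $=1$ generically, I would show the dimension function is also \emph{lower} semicontinuous near that point, equivalently that the special value is not a jump upward from the generic one --- since the generic value cannot exceed $1$ and the special value equals $1$, the generic value must be exactly $1$. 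Concretely: let $h(p)=h^0(\cI^2_{\SU_C(2,\cO_C(p))}(2))$; upper semicontinuity of $h$ gives $h(p)\ge 1$ in a Zariski neighbourhood of $c$ (since $h(c)=1$ and $h\ge 0$ is an integer, the only way $h$ can be upper-semicontinuous with $h(c)=1$ is that it is $\ge 1$ on a dense open... ) --- wait, that is the wrong direction, so instead I would argue directly: $h^0$ is upper semicontinuous, hence $\{p: h(p)\ge 1\}$ is closed; but by the previous Lemma $h(p)\le 1$ for generic $p$, and $h(c)=1$, so it suffices to note that the locus where $h(p)=0$ is open and does not contain $c$, hence its complement --- the locus where $h(p)\ge 1$, which for generic $p$ forces $h(p)=1$ --- is a nonempty closed subset; to see it is everything (or at least contains a dense open), observe that the Pfaffian construction itself deforms: the quadric $Q$ is cut out by the image of $v$ under the equivariant cubic $S^3(\wedge^2\cQ^\vee(1))\to\cO_G(2)$, and as $(C,p)$ varies the embedding $\varphi_p$ is \emph{not} a priori of this Pfaffian shape, which is exactly why an extra argument is needed.

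Therefore the real mechanism I would use is different and cleaner: I would show that the formation of the Coble quadric is compatible with the family of Hecke lines, so that the \emph{relative} sheaf $\cI^2(2)$ pushed forward to the base of pointed curves has a global section that does not vanish at the special point $c$. Since the excerpt computes $H^1(G,\cI^2_D(2))=0$ at the special point (Lemma \ref{lem_II}), the push-forward is locally free of rank $h^0(\cI^2_D(2))=1$ near $c$ by cohomology-and-base-change, and its fiber there is the Coble quadric; hence for $(C,p)$ generic the fiber is again one-dimensional and a generator gives the desired quadric $Q_p$ singular along $\SU_C(2,\cO_C(p))$, whose uniqueness is the content of the previous Lemma. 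This finishes the proof.

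The main obstacle, which I would flag as the crux, is establishing that $H^1$ of the relative $\cI^2(2)$ stays zero in a neighbourhood of the special fiber so that base-change applies cleanly --- equivalently, that the special fiber $(C,c)$ is a cohomologically generic member of the family. The vanishing $H^1(G,\cI^2_D(2))=0$ is known only for the special marked point via the explicit resolution of $\cM$, so I would either invoke semicontinuity of $h^1$ (which only gives $h^1\le$ the special value $=0$, hence $=0$, on an open set containing $c$ --- this actually works and is the clean route) or, if one wants an intrinsic statement, reprove the acyclicity in Lemma \ref{acyclic} in the relative setting over the base of pointed curves. I expect the semicontinuity argument for $h^1$ to be the quickest rigorous path, after which $h^0$ is forced to be $1$ by constancy of $\chi$ together with the already-proven bounds $h^0\le 1$ (previous Lemma) and $h^0\ge 1$ at $c$.
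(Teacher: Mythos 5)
Your proposal takes a genuinely different route from the paper's. The paper does not argue fibre-by-fibre with semicontinuity and base change; it runs a deformation-theoretic argument on Hilbert schemes: it introduces the locally trivial Hilbert scheme $H'_{Q/G}$ of the Pfaffian quadric $Q=D_{Z_1}(v)$, identifies the sheaf $\mathcal{N}'_{Q/G}$ controlling its locally trivial deformations with the restricted Jacobian ideal $\cJ_{Q/G}(2)=\cI_D(2)/\cI_Q(2)$, computes $h^0(\cJ_{Q/G}(2))=70$ and $h^{>0}=0$ from the sequence $0\to\cO_G\to\cI_D(2)\to\cJ_{Q/G}(2)\to 0$, and then shows that the natural map $\sigma\colon H'_{Q/G}\to H_D$ to the ($70$-dimensional, smooth by Corollary \ref{cor_normal_coho}) Hilbert scheme component of $D$ is dominant by a tangent-space computation. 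Since, by Lemma \ref{lem_N}, the images $\varphi_p(\SU_C(2,\cO_C(p)))$ together with the deformations of $v$ fill out $H_D$, the generic such image lies in the (constructible, dense) image of $\sigma$ and hence is the singular locus of a quadric section; uniqueness is the preceding Lemma, exactly as you invoke it. Your route --- constancy of $\chi$ plus semicontinuity of the higher cohomology of $\cI^2(2)$ in a family over $C$ --- is in principle viable and more elementary, and would even give existence for all $p$ in a Zariski neighbourhood of the distinguished point $c$.

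However, you have misidentified the crux, and the real technical content of your approach is left unaddressed. The vanishing of $h^1$ (and of all $h^i$ for $i\ge 1$ --- you need all of them, since $\dim G=12$ and $\chi=h^0-h^1+h^2-\cdots$; Lemma \ref{lem_II} does supply them all at $c$) on a neighbourhood is automatic by semicontinuity once a flat family exists; there is nothing to ``establish'' there. What does need to be established is the flat family itself: (i) that the subscheme $\mathcal{Z}\subset C\times G(2,V_8)$ swept out by the $\varphi_p(\SU_C(2,\cO_C(p)))$ is flat over $C$ near $c$ with scheme-theoretic special fibre exactly $D$ --- for instance because $\mathrm{U}_C(2,1)^{\mathrm{eff}}\to C\times G(2,V_8)$ is a closed immersion over the point $c$ by Theorem \ref{thm_main}, and being a closed immersion is an open condition on the base for proper families; and (ii) that $\cI_{\mathcal{Z}}^2(2)$ is flat over $C$ and restricts on fibres to $\cI^2_{\varphi_p}(2)$, which follows from the smoothness of $\mathcal{Z}\to C$ (so that $\cI_{\mathcal{Z}}$ is locally generated by a relative regular sequence), but not for free. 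Without (i) and (ii) your semicontinuity statements have no family to apply to. Finally, the logic of your last sentence (``constancy of $\chi$ together with $h^0\le 1$ and $h^0\ge 1$ at $c$'') is garbled: once all $h^i$, $i\ge 1$, vanish near $c$, you get $h^0=\chi=1$ directly and the two bounds are not needed.
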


\begin{proof}
Let us consider the embedding $Q=D_{Z_1}(v)\hookrightarrow G$ from Theorem \ref{Coble_G(2,8)}.
Let $H'_{Q/G}$ be the so-called "locally trivial Hilbert scheme" parametrizing locally trivial deformations of $Q\subset G$, as defined in \cite[2.2]{GK89}. Remark that the construction of \cite{GK89} is done for finite singularities, but their arguments, as the authors underline in the introduction, 
go through for arbitrary singularities because of \cite{FK87}. Let 
$$\mathcal{N}'_{Q/G}= \ker ( \mathcal{N}_{Q/G} \to \mathcal{T}^1_Q),$$ where $\mathcal{T}^1_Q$ denotes the first cotangent sheaf of $Q$ (as defined, for instance, in \cite[Section 1.1.3]{Sernesi}). 
In order that 
the  locally trivial Hilbert scheme be smooth at $Q$, by \cite[Prop. 2.3]{GK89} we need that $H^1(Q,\mathcal{N}'_{Q/G})=0$. If this happens, then $h^0(Q,\mathcal{N}'_{Q/G})=\dim(H'_{Q/G})$ and we will show that this equals 70.
By \cite[Section 4.7.1]{Sernesi} we have an exact sequence 
$$ 0 \to T_Q \to T_{G}|_Q \to \cN_{Q/G} \to \mathcal{T}^1_Q \to 0 .$$
Hence  $\mathcal{N}'_{Q/G}$ coincides with the image of $T_{G/Q}$ inside $\cN_{Q/G}$, which  is exactly the (twisted) jacobian ideal $\cJ_{Q/G}(2)$ restricted to $Q$. In turn, the Jacobian ideal of the Pfaffian locus of $6\times  6$
matrices is exactly the ideal of $4\times 4$ skew-symmetric minors. This implies that $\cJ_{Q/G}(2)$ is the twisted ideal $\cI_D(2)/\cI_Q(2)$ of $D$ inside $\cO_Q(2)=\cO_Q(Q)$. Let us therefore consider the exact sequence 
\begin{equation}\label{ideals}
 0 \to \cI_Q(2) \to \cI_D(2) \to \cJ_{Q/G}(2)\to 0.
\end{equation} 

By Lemma \ref{lem_I} we have $h^0(G,\cI_D(2))=71$ and in the proof of the same Lemma we showed that $h^i(G,\cI_D(2))=0$, for $i>0$. On the other hand, we have $\cI_Q(2)=\cO_G$. Via the long cohomology exact sequence associated to sequence \eqref{ideals}, we deduce that $h^0(G,\cJ_{Q/G}(2))=70$ and $h^i(G,\cJ_{Q/G}(2))=0$ for $i>0.$ Hence $H'_{Q/G}$ is smooth of dimension 70 at $[Q]$. We have a natural map between Hilbert schemes 
$$\sigma: H'_{Q/G} \to H_D,$$
where $H_D$ is the component of the Hilbert scheme of $G(2,V_8)$ that contains the point $[D]$ defined by $D$. Both spaces have dimension $70$ and are smooth respectively at $[Q]$ and $[D]$ by Corollary \ref{cor_normal_coho}. In order to show that $\sigma$ is dominant,
it is enough to check that the induced morphism of tangent spaces is dominant. This is true because $H^0(G,\cJ_{Q/G}(2))$ and $H^0(\cN_{D/G})$ are both dominated by $H^0(\cI_D(2))$, and the morphism from $\cI_D(2)$ to $\cN_{D/G}$ factorizes through $\cJ_{Q/G}(2)$. This concludes the proof.
\end{proof}

\subsection{Grassmannian self-duality}
Exactly as we constructed the singular quadric hypersurface $D_{Z_1}(v)\subset G(2,V_8)$,
there is another hypersurface $D_{Z_1}(v^\vee)\subset G(2,V_8^\vee)=G(6,V_8)$. Because of Proposition \ref{iso} these two hypersurfaces are projectively isomorphic. But one should also  expect some projective duality statement analogous to Theorem \ref{selfdual}. Of course we cannot refer to classical projective duality, since we want to consider $D_{Z_1}(v)$ and $D_{Z_1}(v^\vee)$ really as hypersurfaces in Grassmannians, not as subvarieties of the 
ambient projective spaces. It turns out that a version of projective duality in this setting 
(and for certain other ambient varieties than Grassmannians) was once proposed in \cite{chaput} (that remained unpublished). We will refer to it as  {\it Grassmannian
duality}.

The idea is the following. Consider, say, a hypersurface $H$ in $G(2,V_8)$ (or any 
Grassmannian, but let us restrict to the case we are interested in). At a smooth point  
$h=[U_2]$ of $H$, the tangent space to $H$ is a hyperplane in $T_hG(2,V_8)=\Hom(U_2,V_8/U_2)$, 
or equivalently, a line in the dual space $\Hom(V_8/U_2,U_2)$. 
If this line is generated by a 
surjective morphism, the kernel of this morphism 
is a four-dimensional subspace of $V_8/U_2$. Equivalently, 
this defines a six-dimensional space $U_6$ such that $U_2\subset U_6\subset V_8$. We get in this way a rational map from $H$ to $G(6,V_8)$, and we can define the Grassmannian 
dual $H^\vee$ as the image of
this rational map. For more details see \cite[section 1.6]{chaput}. 
Chaput has a remarkable 
Biduality Theorem generalizing the classical statement, according to which duality for subvarieties of Grassmannians is an involution \cite[Theorem 2.1]{chaput}.

So this Grassmannian duality is perfectly natural, and we have: 

\begin{theorem}\label{selfdual2}
$D_{Z_1}(v)\simeq D_{Z_1}(v^\vee)$ is Grassmannian self-dual. 
\end{theorem}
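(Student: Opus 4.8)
The plan is to follow the same strategy that worked for the self-duality of the Coble quartic in Theorem~\ref{selfdual}, transported to the relative/Grassmannian setting. First I would compute the Grassmannian-dual map explicitly on $D_{Z_1}(v)$ in terms of alternating forms. Recall that $D_{Z_1}(v)\subset G(2,V_8)$ is the locus where the skew form $v$ mod $U_2$ on $V_8/U_2$ has rank at most four; at a general such point the form has rank exactly four, so its radical is a two-dimensional subspace of $V_8/U_2$, which is the same as a four-dimensional $U_4$ with $U_2\subset U_4\subset V_8$. So a general point of $D_{Z_1}(v)$ comes with a canonical flag $U_2\subset U_4$, and $v$ lies in $(\wedge^2 U_4)\wedge(\wedge^2 V_8)+\wedge^3 V_8\wedge U_2$. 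I would then identify the tangent hyperplane to $D_{Z_1}(v)$ at $[U_2]$ by the same normal-bundle/stabilizer computation as in the Lemma preceding Theorem~\ref{selfdual}: any $X\in\fgl(V_8)$ with $X(v)$ still in the defining subspace must carry $U_2$ into a distinguished six-dimensional space $U_6$. Concretely, reducing $v$ modulo $(\wedge^2U_4)\wedge(\wedge^2V_8)$ gives an element $\bar v\in\wedge^3(V_8/U_4)\otimes U_2\otimes(\text{twist})$; the kernel of the induced map $V_8/U_4\to\wedge^2(V_8/U_4)^\vee\otimes\cdots$ — equivalently the linear conditions cutting it out — defines $U_6\supset U_4$. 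I expect the key algebraic identity to be
\begin{equation}\label{keyG}
v\in(\wedge^2U_4)\wedge(\wedge^2 U_6)+\wedge^3 U_6\wedge U_2,
\end{equation}
i.e. once $U_6$ is produced, $v$ actually lives in the smaller subspace determined by the full flag $U_2\subset U_4\subset U_6$. The Grassmannian-dual map then sends $[U_2]\mapsto[U_6]\in G(6,V_8)$, where $U_6$ is read off from the kernel of (the morphism adjoint to) the tangent hyperplane, exactly Chaput's recipe.

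Next I would dualize. Fix a volume form on $V_8$ so that $\wedge^4V_8\simeq\wedge^4V_8^\vee$, $v\mapsto v^\vee$, and work in an adapted basis $e_1,\dots,e_8$ with $U_2=\langle e_1,e_2\rangle$, $U_4=\langle e_1,\dots,e_4\rangle$, $U_6=\langle e_1,\dots,e_6\rangle$. Condition~\eqref{keyG} says $v$ is a combination of elementary tensors $e_I$ with $|I\cap\{5,6,7,8\}|\le 2$ and $I\cap\{7,8\}=\emptyset$ except in the term(s) forcing $U_6$ — more precisely the exact combinatorial shape coming from $(\wedge^2U_4)\wedge(\wedge^2 U_6)+\wedge^3 U_6\wedge U_2$. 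Passing to complements under the volume form, $v^\vee$ is a combination of elementary tensors $e_J^\vee$ with the dual combinatorial shape, which one checks is precisely the condition
\[
v^\vee\in(\wedge^2 U_4^\perp)\wedge(\wedge^2 U_2^\perp)+\wedge^3 U_2^\perp\wedge U_6^\perp
\]
expressing that $[U_6^\perp]\in G(2,V_8^\vee)$ lies on $D_{Z_1}(v^\vee)$, with associated flag $U_6^\perp\subset U_4^\perp\subset U_2^\perp$. By the analogue of the tangent-hyperplane lemma, the Grassmannian-dual map on $D_{Z_1}(v^\vee)$ sends $[U_6^\perp]$ to $[U_2^\perp]\in G(6,V_8^\vee)=G(2,V_8)$; identifying $G(6,V_8)$ with $G(2,V_8^\vee)$, this shows the dual map $D_{Z_1}(v)\dashrightarrow G(6,V_8)$ lands in $D_{Z_1}(v^\vee)$ and, by the symmetry between $U_2$ and $U_6^\perp$ in \eqref{keyG} (one recovers $U_2$ from $U_6$ exactly as $U_6$ was built from $U_2$), is birational onto it. Since both $D_{Z_1}(v)$ and $D_{Z_1}(v^\vee)$ are irreducible hypersurfaces of the same Grassmannian (up to the identification $G(2,V_8^\vee)\cong G(6,V_8)$), birationality plus Chaput's Biduality Theorem \cite[Theorem 2.1]{chaput} forces $D_{Z_1}(v)^\vee=D_{Z_1}(v^\vee)$ as sets. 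Finally, Proposition~\ref{iso} (whose proof shows $v\mapsto v^\vee$ is essentially the identity on the chosen Cartan subspace) gives $D_{Z_1}(v)\simeq D_{Z_1}(v^\vee)$, whence $D_{Z_1}(v)$ is Grassmannian self-dual, and one may record the associated correspondence as the flag variety $\{(U_2\subset U_4\subset U_6)\}$ satisfying \eqref{keyG}, dominating both hypersurfaces, in parallel with the diagram after Theorem~\ref{selfdual}.

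The main obstacle I anticipate is establishing \eqref{keyG} — that the six-space $U_6$ extracted from the tangent hyperplane is genuinely compatible with $v$ in the strong sense above, rather than merely containing $U_4$. In the $\PP^7$ case this was the step "equation~\eqref{key}" and it was clean because three-forms in few variables are rigid; here the bookkeeping with the two summands $(\wedge^2U_4)\wedge(\wedge^2V_8)$ and $\wedge^3V_8\wedge U_2$, and tracking which elementary tensors survive modulo $U_4$, is more delicate, and one must be careful that the genericity hypotheses (rank exactly four, $\bar v\ne0$, surjectivity of the tangent comorphism so that Chaput's construction applies) all hold at a general point of $D_{Z_1}(v)$. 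A secondary point is checking that the normal-bundle computation identifying the tangent hyperplane goes through verbatim in $G(2,V_8)$: one uses the resolution of $Z_6(v)\subset Fl(2,6,V_8)$ and the stabilizer description of the tangent space to the flag bundle, exactly as in the proof of the lemma before Theorem~\ref{selfdual}, and verifies that "$X(v)$ stays in the defining subspace $\Rightarrow X(U_2)\subset U_6$" by applying $X$ to \eqref{keyG} and reducing mod $U_4$. Once \eqref{keyG} and this tangent-hyperplane identification are in hand, the duality is a direct transcription of the argument for the Coble quartic, with Chaput's biduality playing the role that irreducibility of hypersurfaces in $\PP^7$ played there.
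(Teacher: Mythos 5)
Your overall architecture is the right one and matches the paper's: at a general point $[U_2]$ of $D_{Z_1}(v)$ extract the flag $U_2\subset U_4\subset U_6$ (with $U_4/U_2$ the radical of the induced skew form and $U_6/U_4$ the kernel of the morphism $V_8/U_4\to U_2$ obtained by reducing $v$ modulo $(\wedge^2 U_4)\wedge(\wedge^2 V_8)$), then dualize via a volume form and complementary index sets, and conclude by irreducibility and Chaput's biduality. But your proposed key identity $v\in(\wedge^2U_4)\wedge(\wedge^2 U_6)+\wedge^3 U_6\wedge U_2$ is false. Since $U_2\subset U_4\subset U_6$, every elementary tensor in that space has all four indices in $U_6$, so the space is contained in $\wedge^4 U_6$, which is only $15$-dimensional; a general $v\in\wedge^4V_8$ lies in no such subspace (the locus of four-forms contained in some $\wedge^4U_6$ has dimension at most $12+15=27<70$). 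The construction of $U_6$ only constrains $v$ \emph{modulo} $(\wedge^2 U_4)\wedge(\wedge^2 V_8)$, and the correct relation --- the paper's (\ref{dec1}) --- is the weaker
$$v\in U_2\wedge(\wedge^2 U_6)\wedge V_8+(\wedge^2 U_4)\wedge(\wedge^2 V_8),$$
whose dual under the volume form is $v^\vee\in U_6^\perp\wedge(\wedge^2 U_2^\perp)\wedge V_8^\vee+(\wedge^2 U_4^\perp)\wedge(\wedge^2 V_8^\vee)$, exactly what is needed to place $U_6$ on $D_{Z_1}(v^\vee)$. So this is not a bookkeeping subtlety to be nursed through, as your last paragraph suggests: the identity you plan to establish cannot hold, and the argument must be run with the ``mod'' version.

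Second, the tangent-hyperplane step cannot be a verbatim transcription of the $\PP^7$ lemma. There, $\{X: X(U_1)\subset U_7\}$ is genuinely a hyperplane in $\Hom(U_1,V_8/U_1)$ because $\Hom(U_1,V_8/U_7)$ is one-dimensional. In $G(2,V_8)$ the condition $X(U_2)\subset U_6/U_2$ cuts out a codimension-four subspace of the twelve-dimensional $\Hom(U_2,V_8/U_2)$, so it cannot be the tangent hyperplane of a hypersurface, and the implication ``$X(v)$ stays in the defining subspace $\Rightarrow X(U_2)\subset U_6$'' that you propose to prove is not the right statement. What must be shown instead is that the linear form cutting out $T_{[U_2]}D_{Z_1}(v)$, viewed via the trace pairing as a morphism $V_8/U_2\to U_2$, has kernel exactly $U_6/U_2$; the paper does this by a first-order deformation computation in an adapted basis, where $v\equiv e_1\wedge e_5\wedge e_6\wedge e_7+e_2\wedge e_5\wedge e_6\wedge e_8$ modulo $(\wedge^2U_4)\wedge(\wedge^2V_8)$, the tangent hyperplane becomes $\delta_{18}=\delta_{27}$, and the annihilating morphism $e_8^*\otimes e_1-e_7^*\otimes e_2$ visibly has kernel $U_6/U_2$. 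With these two corrections the remainder of your plan (dualization by complementary fourtuples, the symmetry between $U_2$ and $U_6^\perp$, biduality) goes through as in the paper.
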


\proof Suppose that $U_2$ belongs to $D_{Z_1}(v)$. By definition, this means that there exists 
$U_4\supset U_2$ (unique in general) such that  $$v \in U_2\wedge (\wedge^3V_8)+(\wedge^2U_4)\wedge (\wedge^2V_8).$$
If we mod out by $ \wedge^2U_4$, we get a tensor in $U_2\otimes \wedge^3(V_8/U_4)\simeq U_2\otimes (V_8/U_4)^\vee$, that is, a morphism from $V_8/U_4$ to $U_2$. Generically 
this morphism has full rank, and its kernel  defines some $U_6\supset U_4$. So we get a flag $(U_2\subset U_4\subset U_6)$ such that 
\begin{equation}\label{dec1}
v\in U_2\wedge (\wedge^2U_6)\wedge V_8+(\wedge^2U_4)\wedge (\wedge^2V_8).
\end{equation}

\begin{lemma}
$U_6$ defines a point of $D_{Z_1}(v^\vee)$. 
\end{lemma}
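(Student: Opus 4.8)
The plan is to mimic the argument used in the proof of Theorem~\ref{selfdual}: exploit the symmetry of the defining condition \eqref{dec1} under the interchange $U_2 \leftrightarrow U_6^\perp$, $U_4 \leftrightarrow U_4^\perp$, combined with the natural identification $\wedge^4 V_8 \simeq \wedge^4 V_8^\vee$ induced by a fixed volume form. Concretely, I would first choose an adapted basis $e_1,\dots,e_8$ of $V_8$ so that $U_2 = \langle e_1, e_2\rangle$, $U_4 = \langle e_1,e_2,e_3,e_4\rangle$ and $U_6 = \langle e_1,\dots,e_6\rangle$. Unwinding \eqref{dec1}, the condition says that $v$ is a linear combination of elementary tensors $e_i\wedge e_j\wedge e_k\wedge e_\ell$ with at least two indices in $\{1,2,3,4\}$, together with tensors of the form $e_a \wedge e_b \wedge e_c \wedge e_\ell$ where $\{a,b,c\}\subset\{1,\dots,6\}$, two of $a,b,c$ lie in $\{1,2\}$ — wait, more precisely, the first summand $U_2\wedge(\wedge^2 U_6)\wedge V_8$ contributes tensors having one index in $\{1,2\}$, two further indices in $\{1,\dots,6\}$, and one free index, while the second summand $(\wedge^2 U_4)\wedge(\wedge^2 V_8)$ contributes tensors with two indices in $\{1,2,3,4\}$.

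Next I would apply the volume-form duality: using the volume form $e_1\wedge\cdots\wedge e_8$, the isomorphism $\wedge^4V_8 \to \wedge^4 V_8^\vee$ sends $e_I \mapsto \pm e_J^\vee$ where $J$ is the complementary index set. Under complementation, ``at least two indices in $\{1,2,3,4\}$'' becomes ``at least two indices in $\{5,6,7,8\}$'', i.e.\ at least two indices in $U_4^\perp$ where $U_4^\perp = \langle e_5^\vee,\dots,e_8^\vee\rangle$; and ``one index in $\{1,2\}$ plus two more in $\{1,\dots,6\}$ plus one free'' complements to ``one index in $\{7,8\}$ plus two more in $\{3,\dots,8\}$ plus one free'', which is exactly the condition for $v^\vee$ to lie in $U_6^\perp \wedge (\wedge^2 U_2^\perp)\wedge V_8^\vee + (\wedge^2 U_4^\perp)\wedge(\wedge^2 V_8^\vee)$. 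In other words, the image flag $(U_6^\perp \subset U_4^\perp \subset U_2^\perp)$ in $V_8^\vee$ satisfies the analogue of \eqref{dec1}, and in particular $v^\vee$ lies in $U_6^\perp \wedge (\wedge^3 V_8^\vee) + (\wedge^2 U_4^\perp)\wedge(\wedge^2 V_8^\vee)$, which is precisely the condition for $[U_6^\perp]\in G(2,V_8^\vee)$ to belong to $D_{Z_1}(v^\vee)$. This proves the Lemma.

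To then conclude Theorem~\ref{selfdual2}, I would argue exactly as in the proof of Theorem~\ref{selfdual}: the map $[U_2]\mapsto [U_6^\perp]$ just constructed agrees with the Grassmannian Gauss map, because one checks (by the same tangent-space computation as in the Lemma preceding Theorem~\ref{selfdual}, adapted to the Grassmannian) that $U_6$ is precisely the kernel defining the tangent hyperplane to $D_{Z_1}(v)$ at $[U_2]$. So the Grassmannian dual $D_{Z_1}(v)^\vee$ is contained in $D_{Z_1}(v^\vee)$; by the symmetry $U_2 \leftrightarrow U_6^\perp$ the construction is manifestly involutive up to duality, so the two hypersurfaces are birational; since both are irreducible hypersurfaces of $G(2,V_8^\vee)$ they coincide. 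The isomorphism $D_{Z_1}(v)\simeq D_{Z_1}(v^\vee)$ is then supplied by Proposition~\ref{iso} (applied in the present relative setting, since the restriction of $v\mapsto v^\vee$ to the Cartan subspace is essentially the identity).

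The main obstacle I anticipate is bookkeeping: being careful about which index patterns the two summands of \eqref{dec1} actually produce, and checking that complementation interchanges them correctly — in particular that the ``free index'' slot behaves symmetrically and that no spurious terms appear after passing to $v^\vee$. The secondary point requiring care is the identification of the map $[U_2]\mapsto[U_6^\perp]$ with the Grassmannian Gauss map of \cite{chaput}: this needs the same normal-bundle/tangent-space argument as in the Lemma before Theorem~\ref{selfdual}, now carried out inside $G(2,V_8)$ rather than $\PP(V_8)$, using that $D_{Z_1}(v)$ is cut out by the relative Pfaffian and that its Jacobian ideal is the ideal of $4\times 4$ Pfaffian minors (as already noted in the proof of Theorem~\ref{quademb}).
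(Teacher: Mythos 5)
Your argument is correct and is exactly the paper's proof of this lemma, just written out in full: the paper also works in an adapted basis and observes that complementation of index sets under the volume-form isomorphism sends the two summands of \eqref{dec1} to $U_6^\perp\wedge(\wedge^2U_2^\perp)\wedge V_8^\vee$ and $(\wedge^2U_4^\perp)\wedge(\wedge^2V_8^\vee)$ respectively, so that $v^\vee$ mod $U_6^\perp$ has rank at most four. (The material you add about identifying $[U_2]\mapsto[U_6^\perp]$ with the Grassmannian Gauss map belongs to the \emph{second} lemma in that proof, not to this one.)
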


\proof Using adapted basis, one  checks that condition (\ref{dec1}) implies that 
$$v^\vee\in U_6^\perp\wedge (\wedge^2U_2^\perp)\wedge V_8^\vee+
(\wedge^2U_4^\perp)\wedge (\wedge^2V_8^\vee).$$
In particular, $v^\vee$ mod $U_6^\perp$ has rank at most four. \qed 

\begin{lemma}
$U_6$ defines a point of $D_{Z_1}(v)^\vee$. 
\end{lemma}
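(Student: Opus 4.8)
The goal is to identify the Grassmannian tangent hyperplane to $D_{Z_1}(v)$ at a general point $[U_2]$, and show it recovers exactly the space $U_6$ constructed above; this is the analogue of the Lemma computing the tangent hyperplane to $\cC$ at $[U_1]$ in the projective case, and the proof strategy should mirror that one. The plan is to work on the flag variety $Fl(2,4,V_8)$ and use the normal exact sequence to identify the tangent space to $D_{Z_1}(v)$ at $[U_2]$ as a subspace of $T_{[U_2]}G = \Hom(U_2,V_8/U_2)$, then transpose to get the tangent hyperplane as a line in $\Hom(V_8/U_2,U_2)$ and read off its kernel.

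First I would introduce $\widetilde{D}$, the variety of flags $(U_2\subset U_4)$ in $Fl(2,4,V_8)$ such that $v\in \cF(U_2,U_4):=U_2\wedge(\wedge^3 V_8)+(\wedge^2U_4)\wedge(\wedge^2 V_8)$. As in the projective case this is the zero locus of the section of the vector bundle $\wedge^4 V_8/\cF$ determined by $v$, the projection $\widetilde{D}\to D_{Z_1}(v)$ is birational, and for general $v$ and general $[U_2]$ the fiber flag $U_4$ is unique and everything is smooth there. The tangent space to $\widetilde{D}$ at $(U_2\subset U_4)$ is the image in $\fgl(V_8)/\fp(U_2,U_4)$ of those $X\in\fgl(V_8)$ with $X(v)\in\cF(U_2,U_4)$; projecting further to $\fgl(V_8)/\fp(U_2)\cong\Hom(U_2,V_8/U_2)$ gives the tangent space to $D_{Z_1}(v)$ at $[U_2]$.

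The key computation: starting from the finer decomposition \eqref{dec1}, namely $v\in U_2\wedge(\wedge^2 U_6)\wedge V_8+(\wedge^2 U_4)\wedge(\wedge^2 V_8)$, I apply a general $X\in\fgl(V_8)$ to get
$$X(v)\in X(U_2)\wedge(\wedge^2 U_6)\wedge V_8+U_2\wedge(\wedge^3 V_8)+(\wedge^2 U_4)\wedge(\wedge^2 V_8).$$
Demanding $X(v)\in\cF(U_2,U_4)$ forces, after reducing modulo $U_2$ and modulo $\wedge^2 U_4$, the term coming from $X(U_2)$ to vanish; since $v\bmod(U_2,\wedge^2 U_4)$ is a nonzero element of $U_2\otimes(V_8/U_4)^\vee$ whose kernel-space is precisely $U_6$ (by construction of $U_6$ as the kernel of the corresponding map $V_8/U_4\to U_2$), this vanishing is equivalent to $X(U_2)\subset U_6$. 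Thus the tangent hyperplane of $D_{Z_1}(v)$ at $[U_2]$ is exactly $\{\,\bar X\in\Hom(U_2,V_8/U_2):\operatorname{im}\bar X\subset U_6/U_2\,\}$, whose annihilator line in $\Hom(V_8/U_2,U_2)$ is generated by a surjection $V_8/U_2\to U_2$ with kernel $U_6/U_2$. By Chaput's definition of Grassmannian duality recalled just above, this means $U_6$ is the point of $D_{Z_1}(v)^\vee$ associated to $[U_2]$, which is the claim.

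The main obstacle is bookkeeping the three overlapping summands in $X(v)$ and verifying cleanly that the only obstruction to $X(v)\in\cF(U_2,U_4)$ is $X(U_2)\subset U_6$: one must check that the contributions $U_2\wedge(\wedge^3 V_8)$ and $(\wedge^2 U_4)\wedge(\wedge^2 V_8)$ produced by applying $X$ to the two parts of \eqref{dec1} already lie in $\cF(U_2,U_4)$ (they visibly do, since $\cF(U_2,U_4)$ is $\fp(U_2,U_4)$-stable and these are exactly its two defining summands), so that the residual condition is genuinely just the vanishing of the $X(U_2)$-term; and that, under the generality hypothesis, the map $V_8/U_4\to U_2$ whose kernel is $U_6$ is surjective so that $\dim U_6=6$ and the transposed line is indeed generated by a surjection, making Chaput's construction applicable. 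Granting the genericity statements already invoked repeatedly in the paper, this is routine multilinear algebra parallel to the proof of the Lemma identifying $\PP(U_7)$ as the tangent hyperplane to $\cC$.
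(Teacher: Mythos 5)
Your overall strategy --- compute the tangent space to $D_{Z_1}(v)$ at $[U_2]$ via the normal exact sequence on $Fl(2,4,V_8)$ and read off the annihilating line in $\Hom(V_8/U_2,U_2)$ --- is the right one, and is essentially what the paper does (in explicit adapted coordinates rather than with abstract $X\in\fgl(V_8)$). But your key computation contains a genuine error, visible already on dimension grounds: you conclude that the tangent hyperplane is $\{\bar X\in\Hom(U_2,V_8/U_2):\mathrm{im}\,\bar X\subset U_6/U_2\}$. That subspace has codimension $4$ in the $12$-dimensional tangent space of the Grassmannian, whereas the tangent space to the hypersurface $D_{Z_1}(v)$ at a smooth point is a hyperplane; correspondingly, its annihilator in $\Hom(V_8/U_2,U_2)$ is the $4$-dimensional space $\Hom(V_8/U_6,U_2)$, not a line. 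So the object you produce cannot be the tangent hyperplane, and Chaput's construction cannot be applied to it.

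The error enters in two places. First, the displayed containment $X(v)\in X(U_2)\wedge(\wedge^2 U_6)\wedge V_8+U_2\wedge(\wedge^3 V_8)+(\wedge^2 U_4)\wedge(\wedge^2 V_8)$ is false for general $X\in\fgl(V_8)$: applying $X$ to one of the two $U_4$-factors of $v'\in(\wedge^2 U_4)\wedge(\wedge^2 V_8)$ produces terms in $U_4\wedge X(U_4)\wedge(\wedge^2 V_8)$, which lie in $U_4\wedge(\wedge^3 V_8)$ but not in $\cF(U_2,U_4)$ (for instance $e_3\wedge e_5\wedge e_6\wedge e_7$ has only one factor in $U_4$ and none in $U_2$). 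The $\fp(U_2,U_4)$-stability of $\cF(U_2,U_4)$ is irrelevant here, precisely because the tangent computation requires $X$ to range over all of $\fgl(V_8)$. Second, even for the $X(U_2)$-part you must keep track of the specific tensor $v$ modulo $(\wedge^2 U_4)\wedge(\wedge^2 V_8)$, not merely the subspace it generates: in an adapted basis this part reads $X(e_1)\wedge e_5\wedge e_6\wedge e_7+X(e_2)\wedge e_5\wedge e_6\wedge e_8$, and the two summands can cancel against each other, so their sum can vanish without each vanishing. The repair, which is what the paper's proof does, is to reduce modulo the larger space $U_4\wedge(\wedge^3 V_8)$, which contains $\cF(U_2,U_4)$ and kills all contributions of $X(v')$ as well as the components of $X(U_2)$ lying in $U_4$; what survives is the single linear condition $\delta_{18}=\delta_{27}$ (writing $X(e_i)=\sum_j\delta_{ij}e_j$). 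This exhibits a hyperplane containing the tangent space, which suffices since both are hyperplanes, and its annihilator is the line spanned by $e_8^*\otimes e_1-e_7^*\otimes e_2$, a surjection $V_8/U_2\to U_2$ with kernel $U_6/U_2$. So your final answer is correct, but the proof as written does not establish it.
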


\proof Using a  basis of $V_8$ adapted to the flag $(U_2\subset U_4\subset U_6)$, we can rewrite relation (\ref{dec1}) in the form 
$$v=e_1\wedge  e_5\wedge e_6\wedge e_7+e_2\wedge  e_5\wedge e_6\wedge e_8+v', \quad v'\in (\wedge^2U_4)\wedge (\wedge^2V_8),$$
where $U_2=\langle e_1,e_2\rangle $ and $U_6=\langle e_1,\ldots ,e_6\rangle $. 
We can describe infinitesimal deformations of $U_2$ by some infinitesimal deformations 
of the vectors in the adapted basis, say $e_i\mapsto e_i+\epsilon \delta_i$, and we 
must keep a similar relation. Modding out by $U_4$, we only remain with the relation 
$$\delta_1\wedge  e_5\wedge e_6\wedge e_7+\delta_2\wedge  e_5\wedge e_6\wedge e_8=0 
\qquad \mathrm{mod} \; U_4,$$
which we can simply rewrite as $\delta_{18}=\delta_{27}$.
This relation describes the tangent hyperplane to $D_{Z_1}(v)$ at $U_2$, as a hyperplane 
in $\Hom(U_2,V_8/U_2)$,  orthogonal to the morphism $e_8^*\otimes e_1-e_7^*\otimes e_2$. 
The kernel of this morphism is $U_6/U_2$, and we are done. \qed

\medskip These two Lemmas together imply that $D_{Z_1}(v^\vee)$ coincides with the 
Grassmannian dual to $D_{Z_1}(v)$. The proof of the Theorem is complete. \qed 

\medskip Note that we can resolve the singularities of $D_{Z_1}(v)$ by considering 
flags $(U_2\subset U_4)$ as before, which gives a subvariety $\tilde{D}_{Z_1}(v)\subset Fl(2,4,V_8)$. By considering the flags $(U_2\subset U_4\subset U_6)$ as in the proof
of the previous statement we obtain a subvariety $D_{Z_1}(v,v^\vee)\subset Fl(2,4,6,V_8)$
that resolves simultaneously the singularities of $D_{Z_1}(v)$
and $D_{Z_1}(v^\vee)$. As for the Coble quartic, we get a diagram
$$\xymatrix@-2ex{&& Fl(2,4,6,V_8) & &\\
Fl(2,4,V_8) & & D_{Z_1}(v,v^\vee)\ar[dr]\ar[dl]\ar@{^{(}->}[u] && Fl(4,6,V_8)\\
 & \tilde{D}_{Z_1}(v)\ar[dl]\ar@{_{(}->}[ul]  \ar@{-->}[rr]& & 
 \tilde{D}_{Z_1}(v^\vee)\ar[dr]\ar@{^{(}->}[ur] &\\
G(2,V_8)\supset  D_{Z_1}(v)  \ar@{-->}[rrrr]^{dD}& & && D_{Z_1}(v^\vee)\subset G(6,V_8) 
}$$
The birational map $\tilde{D}_{Z_1}(v)\dashrightarrow \tilde{D}_{Z_1}(v^\vee)$
must be a flop, resolved by two symmetric contractions.

\medskip \noindent {\it Question.}
Is there a modular interpretation of $D_{Z_1}(v)$ as for the Coble quartic? 
And of this diagram?

\begin{remark} 
Our framework excludes the hyperelliptic genus three curves, but there should be a 
very similar story for these curves. In fact, consider a general pencil of quadrics in $\PP^7=\PP(V_8)$. The eight singular members of the pencil define such a 
hyperelliptic curve $C$. 
It is a special case of the results of \cite{desale-ramanan} that the moduli space 
$\SU_C(2,L)$, for $L$ of odd degree, can be identified with the bi-orthogonal 
Grassmannian, that is the subvariety of $G(2,V_8)$ parametrizing subspaces that 
are isotropic with respect to any quadric in the pencil. On the other hand, the even 
moduli space $\SU_C(2)$ is a double cover of the six-dimensional quadric $\QQ^6$, 
branched over a quartic section which is singular along a copy of the Kummer threefold
of the curve. One expects this quartic to be of Coble type, in the sense that it should be 
the unique quartic section of $\QQ^6$ that is singular along the Kummer of $C$. 
It should also 
be self-dual in a suitable sense, and the whole story should be related to the 
representation theory of $\mathrm{Spin}_8$. We plan to explore these topics in future work. 
\end{remark} 

\bibliography{moduli}

\bibliographystyle{amsalpha}

\footnotesize

\bigskip 
Institut de Mathématiques de Bourgogne, Université de 
Bourgogne et Franche-Comté, 9 Avenue Alain Savary, 21078 Dijon Cedex, {\sc France}.

{\it Email address}: {\tt vladimiro.benedetti@u-bourgogne.fr}

\smallskip 

Institut Montpelli\'erain Alexander Grothendieck, Université de Montpellier, 
 Place  Eugène Bataillon, 34095 Montpellier Cedex 5, {\sc France}.
 
{\it Email address}: {\tt michele.bolognesi@umontpellier.fr}

\smallskip 

Institut de Mathématiques de Bourgogne, Université de 
Bourgogne et Franche-Comté, 9 Avenue Alain Savary, 21078 Dijon Cedex, {\sc France}.

{\it Email address}: {\tt daniele.faenzi@u-bourgogne.fr}

\smallskip 
Institut de Math\'ematiques de Toulouse, 
Paul Sabatier University, 118 route de Narbonne, 31062 Toulouse
Cedex 9, {\sc France}.

{\it Email address}: {\tt manivel@math.cnrs.fr}
\end{document}